\definecolor{light-gray1}{gray}{0.90}
\definecolor{light-gray2}{gray}{0.80}
\definecolor{deepgreen}{cmyk}{1,0,1,0.5}
\newcommand{\E}{\mathcal{E}}
\newcommand{\J}{\mathcal{J}}
\newcommand{\HH}{\mathcal{H}}
\newcommand{\Sp}{\mathbb{S}}
\newcommand{\EE}{\mathscr{E}}
\newcommand{\FF}{\mathscr{F}}
\newcommand{\CC}{\mathscr{C}}
\newcommand{\N}{\mathbb{N}}
\newcommand{\R}{\mathbb{R}}
\newcommand{\Z}{\mathbb{Z}}
\newcommand{\al}{\alpha}
\newcommand{\be}{\beta}
\newcommand{\ga}{\gamma}
\newcommand{\de}{\delta}
\newcommand{\e}{\varepsilon}
\newcommand{\fy}{\varphi}
\newcommand{\om}{\omega}
\newcommand{\la}{\lambda}
\newcommand{\s}{\sigma}
\newcommand{\Om}{\Omega}
\newcommand{\p}{\partial}
\newcommand{\supp}{\operatorname{supp}}
\newcommand{\loc}{\operatorname{loc}}
\newcommand{\I}{\infty}
\newcommand{\ti}{\widetilde}
\newcommand{\ds}{\displaystyle}
\newcommand{\abs}[1]{\left\lvert{#1}\right\rvert}
\newcommand{\Del}[1]{}
\def\ti{\tilde}
\numberwithin{equation}{section}
\newtheorem{thm}{Theorem}[section]
\newtheorem{cor}[thm]{Corollary}
\newtheorem{lem}[thm]{Lemma}
\newtheorem{prop}[thm]{Proposition}
\theoremstyle{remark}
\newtheorem{rem}{Remark}
\begin{document}

\author{R.~C\^{o}te}
\author{C.~E.~Kenig}
\author{A.~Lawrie}
\author{W.~Schlag}

\title[Large energy solutions of the equivariant wave map problem: II]{Characterization of large energy solutions of the equivariant wave map problem: II}
\begin{abstract} We consider $1$-equivariant wave maps from $\R^{1+2} \to \Sp^2$ of finite energy. We establish a classification of all degree one global solutions  whose energies 
are  less than three times the energy of the harmonic map~$Q$. In particular, for each global energy solution of topological degree one, we show that  the solution asymptotically decouples  into a rescaled harmonic map plus a radiation term. 
Together with the companion article, \cite{CKLS1}, where we consider the case of finite-time blow up, this gives a  characterization of all $1$-equivariant, degree~$1$ wave maps in the energy regime~$[E(Q), 3E(Q))$.  
\end{abstract}

\thanks{Support of the National Science Foundation DMS-0968472 for the second author, and  DMS-0617854, DMS-1160817 for the fourth author is gratefully acknowledged. 
This first author wishes to thank the University of Chicago for its hospitality during the academic year 2011-12, and acknowledges support from the European Research Council through the project BLOWDISOL}

\subjclass{35L05, 35L71}

\keywords{equivariant wave maps, concentration compactness, profile decomposition, finite time blowup}

\maketitle

\section{Introduction}
This paper is the companion article to \cite{CKLS1}. Here we continue our study of the equivariant wave maps problem from $1+2$ dimensional Minkowski space to $2$--dimensional surfaces of revolution.
 In local coordinates on the target manifold, $(M, g)$,  the Cauchy problem for wave maps is given by
\begin{align} \label{cp i}
  &\Box U^k =  - \eta^{\al \be} \Gamma^k_{i j}(U) \p_{\al} U^i \p_{\be} U^j\\
  &(U, \p_t U)\vert_{t=0}=(U_0, U_1), \notag
  \end{align}
 where $\Gamma_{i j}^k$ are the Christoffel symbols on $TM$. As in \cite{CKLS1} we will, for simplicity, restrict our attention to the case when the target $(M,g)= (\Sp^2, g)$ with $g$ the round metric on the $2$--sphere, $\Sp^2$. Our results here apply to more general compact surfaces of revolution as well, and we refer the reader to \cite[Appendix $A$]{CKLS1} for more details. 

In  spherical coordinates, $$(\psi, \om) \mapsto (\sin \psi \cos \om , \sin \psi \sin \om, \cos \psi),$$ on $\Sp^2$, the metric, $g$,  is given by the matrix $g = \textrm{diag}( 1, \sin^2(\psi))$. In the case of  $1$-equivariant wave maps, we require our wave map, $U$, to have the form 
\begin{align*}
U(t, r, \om) = (\psi(t, r), \om) \mapsto (\sin \psi(t, r) \cos \om ,\, \sin \psi(t, r) \sin \om, \,\cos \psi(t, r)),
 \end{align*}
 where $(r, \om)$ are polar coordinates on $\R^2$. In this case, the Cauchy problem \eqref{cp i} reduces to
\begin{align} \label{cp} 
&\psi_{tt} - \psi_{rr} - \frac{1}{r} \psi_r + \frac{\sin(2\psi)}{2r^2} = 0\\
&(\psi, \psi_t)\vert_{t=0} = (\psi_0, \psi_1). \notag
\end{align} 
Wave maps exhibit a conserved energy, which in this equivariant setting is given by 
\begin{align*}
\E(U, \p_tU)(t)=\E(\psi, \psi_t)(t) = \int_0^{\infty} \left(\psi_t^2 + \psi_r^2 + \frac{\sin^2(\psi)}{r^2}\right) \, r\, dr = \textrm{const.,} 
\end{align*}
and they are invariant under the scaling $$\vec \psi(t, r):= (\psi(t, r), \psi_t(t, r)) \mapsto (\psi(\la t, \la r), \la \psi_t(\la t \la r)).$$ 
The conserved energy is also invariant under this scaling which means that the Cauchy problem under consideration is energy critical. 

We refer the reader to \cite{CKLS1} for a more detailed introduction and history of the equivariant wave maps problem.

As in \cite{CKLS1}, we note that any wave map $\vec \psi(t, r)$ with finite energy and continuous dependence on $t \in I$ satisfies $\psi(t, 0)=m \pi$ and $\psi(t, \infty)= n \pi$ for all $t \in I$ for fixed integers $m, n$. This determines a disjoint set of energy classes
\begin{align}\label{Hnm}
 \HH_{m, n} := \{ (\psi_0, \psi_1) \,\vert\,  \E(\psi_0, \psi_1) < \infty \quad \textrm{and} \quad \psi_0(0) =m\pi, \, \psi_0(\infty) = n\pi\}. 
 \end{align}
We will mainly consider the spaces $\HH_{0, n}$ and we denote these by $\HH_n := \HH_{0, n}$. In this case we refer to $n$ as the degree of the map. We also define $\HH= \bigcup_{n\in \Z} \HH_{n}$ to be the full energy space.

In our analysis, an important role is played by the unique (up to scaling) non-trivial harmonic map, $Q(r)= 2 \arctan(r)$, given by stereographic projection.  We note that $Q$ solves 
\begin{align}\label{Q}
Q_{rr} + \frac{1}{r}Q_r = \frac{\sin(2Q)}{2r^2}.
\end{align} 
Observe in addition that  $(Q, 0) \in \HH_1$ and in fact $(Q, 0)$ has minimal energy in $\HH_1$ with $\E(Q):= \E(Q,0)=4$. Note the slight abuse of notation above in that we will denote the energy of the element $(Q, 0) \in \HH_1$ by $\E(Q)$ rather than $\E(Q, 0)$.

Recall that in \cite{CKLS1} we showed that for any data $\vec \psi(0)$ in the zero topological class, $\HH_0$, with energy $\E( \vec \psi) <2\E(Q)$ there is a corresponding unique global wave map evolution $\vec\psi(t, r)$ that scatters to zero in the sense that the energy of $\vec \psi(t)$ on any arbitrary, but fixed compact region vanishes as $t \to  \infty$, see \cite[Theorem $1.1$]{CKLS1}. An equivalent way to view this scattering property is that there exists a decomposition 
\begin{align}\label{deg 0 scat}
\vec \psi(t)= \vec \fy_L(t) + o_{\HH}(1) \quad \textrm{as} \quad t \to \infty
\end{align}
where $\vec \fy_L(t) \in \HH_0$ solves the linearized version of \eqref{cp}:
\begin{align} \label{2d lin}
\fy_{tt} -\fy_{rr} -\frac{1}{r} \fy_r + \frac{1}{r^2} \fy = 0
\end{align} 
This result was proved via the concentration-compactness/rigidity method which was developed by the second author and Merle in \cite{KM06} and  \cite{KM08}, and it provides a complete classification of all solutions in $\HH_0$ with energy below $2\E(Q)$, namely, they all exist globally and scatter to zero. We note that this theorem is also a consequence of the work by Sterbenz and Tataru in \cite{ST2} if one considers their results in the equivariant setting. 

In \cite{CKLS1} we also study {\em degree one} wave maps, $\vec \psi(t) \in \HH_1$, with energy $\E(\vec \psi) = \E(Q) + \eta <3 \E(Q)$  that blow up in finite time. Because we are working in the equivariant, energy critical setting, blow-up can only occur  at the origin in $\R^2$ and in an energy concentration scenario. We show that if blow-up does occur, say at $t=1$, then there exists a scaling parameter $\la(t)=o(1-t)$, a degree zero map $\vec \fy \in \HH_0$ and a decomposition 
\begin{align}\label{bu decomp}
\vec \psi(t, r) =   \vec \fy(r) + \left(Q\left(r/\la(t)\right), 0\right) + o_{\HH}(1) \quad \textrm{as} \quad t \to 1.
\end{align}

Here we complete our study of {\em degree one} solutions to \eqref{cp}, i.e., solutions that lie in $\HH_1$, with energy below $3\E(Q)$, by providing a classification of such solutions with this energy constraint. Since the degree of the map is preserved for all time, scattering to zero is not possible for a degree one solution. However, we show that a decomposition of the form \eqref{bu decomp} holds in the global case. In particular we establish the following  theorem:


\begin{thm}[Classification of  solutions in $\HH_1$ with energies below $3\E(Q)$]\label{glob sol} Let $\vec \psi(0)\in \HH_1$ and denote by $\vec \psi(t) \in \HH_1$ the corresponding wave map evolution. Suppose that $\vec \psi$ satisfies 
\begin{align*}
\E(\vec \psi) = \E(Q) + \eta < 3\E(Q).
\end{align*}
Then, one of the following two scenarios occurs:
\begin{itemize}
\item[$(1)$] \textbf{Finite time blow-up:} The solution $\vec \psi(t)$ blows up in finite time, say at $t=1$, and there exists a continuous function, $\la:[0,1) \to (0, \infty)$ with $\la(t) = o(1-t)$, a map $\vec \fy=(\fy_0, \fy_1) \in \HH_0$ with $\E(\vec \fy)= \eta$, and a decomposition  
\begin{align}\label{dec1}
\vec \psi(t) =   \vec \fy + \left(Q\left(\cdot/\la(t)\right), 0\right) + \vec\epsilon(t)
\end{align}
such that $\vec \epsilon(t) \in \HH_0$ and $\vec \epsilon(t)\to 0$ in $\HH_0$ as $t \to 1$. 
\item[$(2)$] \textbf{Global Solution:} The solution $\vec \psi(t) \in \HH_1$ exists globally in time and there exists a continuous function, $\la:[0,\infty) \to (0, \infty)$ with $\la(t) = o(t)$ as $t \to \infty$, a solution $\vec \fy_L(t) \in \HH_0$ to the linear wave equation \eqref{2d lin}, 
and a decomposition  
\begin{align}\label{dec}
\vec \psi(t) =   \vec \fy_L(t) + \left(Q\left(\cdot/\la(t)\right), 0\right) + \vec\epsilon(t)
\end{align}
such that $\vec \epsilon(t) \in \HH_0$ and $\vec \epsilon(t)\to 0$ in $\HH_0$ as $t \to \infty$. 
\end{itemize}
\end{thm}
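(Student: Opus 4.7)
The finite-time blow-up alternative $(1)$ is the main result of the companion article \cite{CKLS1}, so it suffices to treat a solution $\vec\psi(t)\in\HH_1$ that exists globally in forward time with $\E(\vec\psi)=\E(Q)+\eta<3\E(Q)$ and establish the decomposition \eqref{dec}. The overall strategy is concentration-compactness: along an arbitrary sequence $t_n\to\infty$, extract a Bahouri--G\'erard profile decomposition for $\vec\psi(t_n)$ adapted to the equivariant wave map equation, use the topological degree and energy orthogonality to reduce the profile list to linear radiation plus a single degree one nonlinear profile, and then identify that profile with a rescaled harmonic map via rigidity.

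\textbf{Step 1: profile decomposition and the degree constraint.} I would write
\[
\vec\psi(t_n)=\sum_{j=1}^{J}\vec U^{j}_{L}(-t_n^{j}/\la_n^{j},\,\cdot/\la_n^{j})+\vec w_n^{J},
\]
with pairwise asymptotically orthogonal parameters $(t_n^{j},\la_n^{j})$ and remainders $\vec w_n^{J}$ that are dispersively small as $J,n\to\infty$. Replacing each linear evolution $\vec U^{j}_{L}$ by the corresponding nonlinear wave map profile yields a nonlinear profile decomposition of $\vec\psi$ on a time interval containing $t_n$. Since every linear profile has degree zero, the degree of $\vec\psi$ must be carried by the nonlinear profiles; Pythagorean energy orthogonality $\E(Q)+\eta=\sum_j \E(\vec U^{j})+o(1)$ together with the lower bound $\E(Q)$ for any nontrivial degree one configuration forces exactly one nonlinear profile $\vec U^{1}$ of degree one, and leaves all other profiles and the dispersive limit of $\vec w_n^{J}$ with total energy strictly below $2\E(Q)$.

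\textbf{Step 2: rigidity and the bubble.} By \cite[Theorem~1.1]{CKLS1} every degree zero profile with energy below $2\E(Q)$ scatters to a free wave, so the degree zero nonlinear profiles together with the leftover linear data can all be assembled into a single linear radiation field $\vec\fy_L(t)\in\HH_0$. The degree one profile $\vec U^{1}$ is then the only non-scattering piece, and by construction the sequence of its time translates has a trajectory in $\HH_1$ that is precompact modulo the scaling symmetry. A critical-element rigidity theorem, proved via the standard virial/Morawetz identity for \eqref{cp} combined with the static equation \eqref{Q}, shows that $\vec U^{1}$ must be stationary, i.e. of the form $(Q(\cdot/\mu),0)$ up to rescaling. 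Inserting this into the profile decomposition at $t_n$ produces a sequence of scales $\la_n=\la(t_n)$ and an error $\vec\epsilon(t_n)\to 0$ in $\HH$ yielding \eqref{dec}.

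\textbf{Step 3: continuity and the bound $\la(t)=o(t)$.} The asymptotic orthogonality of the decomposition forces the bubble scale to be unique up to $o(1)$, which allows me to define $\la(t)$ continuously on $[0,\infty)$ by modulating to the $\la$-minimum of $\|\vec\psi(t)-\vec\fy_L(t)-(Q(\cdot/\la),0)\|_{\HH}$; continuity of $\la$ then follows from the continuous dependence of the flow in $\HH_1$. A soft compactness argument together with finite speed of propagation gives $\la(t)\lec t$, but the improvement to $\la(t)=o(t)$ rules out a bubble drifting along the light cone and is the main obstacle of the proof. I would close this step by an exterior energy / channel of energy argument in the spirit of Duyckaerts--Kenig--Merle combined with a Morawetz-type monotonicity for equivariant wave maps on the region $\{r\geq(1-\sigma)t\}$: a scale $\la(t_n)\gec t_n$ along a sequence would keep a definite fraction of the bubble's energy at spatial scale comparable to $t_n$ in the exterior cone, contradicting both the outgoing dispersion of the radiation term $\vec\fy_L(t)$ and the Morawetz decay of the nonlinear correction $\vec\epsilon(t)$.
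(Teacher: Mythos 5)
Your Step 1 has a structural flaw that the paper is designed around: a Bahouri--G\'erard profile decomposition of the form $\vec\psi(t_n)=\sum_j \vec U^j_L(\cdot)+\vec w_n^J$ cannot be applied directly to the degree one sequence $\vec\psi(t_n)\in\HH_1$, because each linear profile $\vec U^j_L$ and the remainder $\vec w_n^J$ live in $H\times L^2\subset\HH_0$ and hence have degree zero; a finite sum of degree zero objects can never equal a degree one map, so the decomposition as you wrote it has no meaning. You do not get to ``carry the degree'' by a nonlinear profile after the fact, since the nonlinear profile is the nonlinear evolution of the (degree zero) linear profile data. The paper instead first identifies the bubble via Struwe's bubbling theorem (Theorem~\ref{struwe}), which produces a sequence of times $\tau_n$ and scales $\la_n$ along which the rescaled solution converges locally in $L^2_t(H^1\times L^2)_{\loc}$ to $Q$; the Struwe argument is a local compactness/elliptic bootstrap built on the energy decay of Proposition~\ref{ctzcor1} and Lemma~\ref{sig lem}, and is logically prior to any profile decomposition. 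Only after subtracting the bubble and the radiation term to form the degree zero remainder $\vec b_n$ does the paper invoke a profile decomposition in $\HH_0$.

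Two further points. First, you assert that the degree zero nonlinear profiles can ``all be assembled into a single linear radiation field,'' but this is not automatic: a priori the decomposition of $\vec b_n$ could contain nontrivial degree zero profiles at intermediate scales $\la_n\ll\la_n^j\ll\tau_n$ which are not absorbed by the radiation. Proposition~\ref{no prof} and Corollary~\ref{ext en est} are exactly where the paper rules these out, and that step is where the exterior energy (channel of energy) estimate for the free radial wave in $4$d is used, together with the delicate ``evolution of the decomposition'' argument (\eqref{ghbt}, \eqref{ghbz}). Your proposal uses the channel-of-energy idea, but aims it at the wrong target. Second, the bound $\la(t)=o(t)$ does not need a Morawetz or exterior energy contradiction: in the paper it drops out of the Christodoulou--Tahvildar-Zadeh energy localization (Proposition~\ref{ctzcor1} and Corollary~\ref{psi to pi la}), which already forces $\psi(t)\to\pi$ uniformly on $\{r\geq\la t\}$, and then from $\E\bigl(\vec a(t)-(Q(\cdot/\la(t)),0)\bigr)\to 0$ together with the vanishing of $\na_{t,r} a$ for $r\geq t$ one concludes directly that $t/\la(t)\to\infty$. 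Your sketch is closer to the spirit of the Duyckaerts--Kenig--Merle program, but it bypasses the CTZ flux estimates that actually carry the argument here, and it has the degree obstruction in Step 1 that no amount of rigidity will repair without a prior Struwe-type bubbling step.
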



\begin{rem} One should note that the requirement $\la(t)=o(t)$ as $ t \to \infty$ in part $(2)$ above leaves open many possibilities for the asymptotic behavior of global degree one solutions to \eqref{cp} with energy below $3\E(Q)$. 
If $\la(t) \to \la_0 \in (0, \infty)$  then our theorem says that the solution $\psi(t)$ asymptotically decouples into a soliton, $Q_{\la_0}$, plus a purely dispersive term, and one can call this {\em scattering to $Q_{\la_0}$}. If $\la(t) \to 0$ as $t \to \infty$ then this means that the solution is concentrating $\E(Q)$ worth of energy at the origin  as $t \to\infty$ and we refer to this phenomenon as {\em infinite time blow-up}. Finally, if  $\la(t) \to \infty$ as $t \to \infty$ then the solution can be thought of as concentrating $\E(Q)$ worth of energy  at spacial infinity as $t \to \infty$ and we call this {\em infinite time flattening}.  

We also would like to highlight the fact that global solutions of the type mentioned above, i.e., infinite time blow-up and flattening,  have been constructed in the case of the $3d$ semi-linear focusing energy critical wave equation by Donninger and Krieger in \cite{DK}. No constructions of this type are known at this point for the energy critical wave maps studied here.  In addition, a classification of all the possible dynamics for maps in $\HH_1$ at energy levels $\ge 3\E(Q)$ remains open. 
\end{rem} 

\begin{rem}We emphasize that \cite{CKLS1} goes hand-in-hand with this article and the two papers are intended to be read together. In fact,  part $(1)$ of Theorem~\ref{glob sol} was established in \cite[Theorem~$1.3$]{CKLS1}.  Therefore, in order to complete the proof of Theorem~\ref{glob sol} we need to prove only part $(2)$ and the rest of this paper will be devoted to that goal. The broad outline of the proof of Theorem~\ref{glob sol} $(2)$ is similar in nature to the proof of part $(1)$. With this is mind we will often refer the reader to \cite{CKLS1} where the details are nearly identical instead of repeating the same arguments here. 
\end{rem}

\begin{rem} We remark that Theorem~\ref{glob sol} is reminiscent of the recent works of Duyckaerts, the second author, and Merle in \cite{DKM1, DKM2, DKM3, DKM4} for the energy critical semi-linear focusing wave equation in $3$ spacial dimensions and again we refer the reader to \cite{CKLS1} for a more detailed description of the similarities and differences between these papers.  
\end{rem}
 
 \begin{rem} Finally, we would like to note that the same observations in \cite[Appendix~A]{CKLS1} regarding $1$-equivariant wave maps to more general targets, higher equivariance classes and the $4d$ equivariant Yang-Mills system hold in the context of the global statement in Theorem~\ref{glob sol}. 
 \end{rem} 
 


\section{Preliminaries}
For the reader's convenience, we recall a few facts and notations from \cite{CKLS1} that are used frequently in what follows. 
We define the $1$-equivariant energy space  to be $$\HH= \{\vec U \in \dot{H}^1 \times L^2(\R^2; \Sp^2) \, \vert\, U \circ \rho = \rho \circ U, \,\,\, \forall \rho \in SO(2)\}.$$ $\HH$ is endowed with the norm 
\begin{align}\label{en norm}
\E(\vec U(t)) = \|\vec U(t) \|_{\dot{H}^1 \times L^2(\R^2; \Sp^2)}^2 = \int_{\R^2}  ( \abs{\p_t U}^2_g +  \abs{\nabla U}_g^2 ) \, dx.
\end{align}
As noted in the introduction, by our equivariance condition we can write $U(t, r, \om) = (\psi(t,r), \om)$ and the energy of a wave map becomes 
\begin{align}\label{en psi}
\E(U, \p_tU)(t)=\E(\psi, \psi_t)(t) = \int_0^{\infty} \left(\psi_t^2 + \psi_r^2 + \frac{\sin^2(\psi)}{r^2}\right) \, r\, dr = \textrm{const.} 
\end{align}
We also define the localized energy as follows: Let $r_1, r_2 \in [0, \infty)$. Then   
\begin{align*}
\E_{r_1}^{r_2}(\vec \psi(t)):= \int_{r_1}^{r_2}  \left(\psi_t^2 + \psi_r^2 + \frac{\sin^2(\psi)}{r^2}\right) \, r\, dr.
\end{align*}
Following Shatah and Struwe, \cite{SS}, we set
\begin{align}\label{G def}
G(\psi):= \int_0^{\psi} \abs{\sin \rho} \, d\rho.
\end{align}
Observe that for any $(\psi, 0) \in \HH$ and for any $r_1, r_2 \in[0, \infty)$ we have 
\begin{align}\label{b R}
\abs{G(\psi(r_2))-G(\psi(r_1))} &=\abs{\int_{\psi(r_1)}^{\psi(r_2)} \abs{\sin \rho} \, d\rho} 
\\
&= \abs{\int_{r_1}^{r_2} \abs{\sin(\psi(r))} \psi_r(r) \, dr} \notag
 \le\frac{1}{2} \E_{r_1}^{r_2}(\psi, 0).
\end{align}
We also recall from \cite{CKLS1} the definition of the space $H \times L^2$. 
\begin{align}\label{HxL^2}  
\|(\psi_0, \psi_1)\|_{H \times L^2}^2 := \int_0^{\infty} \left(\psi_1^2 + (\psi_0)_r^2 + \frac{ \psi_0^2}{r^2} \right) \, r\, dr.
\end{align} 
We note that for degree zero maps $(\psi_0, \psi_1) \in \HH_0$ the energy is comparable to the $H \times L^2$ norm provided the $L^{\infty}$ norm of $\psi_0$ is uniformly bounded below $\pi$.  This equivalence of norms is detailed in \cite[Lemma $2.1$]{CKLS1}, see also \cite[Lemma~$2$]{CKM}. The space $H \times L^2$ is not defined for maps  $(\psi_0, \psi_1) \in \HH_1$, but one can instead consider the $H \times L^2$ norm of $(\psi_0-Q_{\la}, 0)$ for $\la \in (0, \infty)$, and $Q_{\la}(r)= Q(r/ \la)$. In fact, for maps $\vec \psi \in \HH_1$ such that $\E(\vec \psi)- \E(Q)$ is small, one can choose $\la>0$ so that  
\begin{align*}
\|(\psi_0- Q_{\la}, \psi_1)\|_{H \times L^2}^2 \simeq \E(\vec \psi)- \E(Q).
\end{align*}
This amounts to the coercivity of the energy near~$Q$ up to the scaling symmetry. 
For more details we refer the reader to \cite[Proposition~$4.3$]{Co}, \cite[Lemma~$2.5$]{CKLS1}, and~\cite{BKT}.

\subsection{Properties of global wave maps}

We will need a few facts about global solutions to  \eqref{cp}. The results in this  section constitute slight refinements and a few consequences of the work of Shatah and Tahvildar-Zadeh in \cite[Section~$3.1$]{STZ} on global equivariant wave maps and originate in the work of Christodoulou and Tahvildar-Zadeh on spherically symmetric wave maps, see~\cite{CTZ1}. 

\begin{prop}\label{ctzcor1}
Let $\vec \psi(t) \in \HH$ be a global wave map. Let $0 < \la<1$. Then we have 
\begin{align}\label{limsup la}
\limsup_{t \to \infty}\E_{\la t}^{t-A}(\vec \psi(t)) \to 0 \quad \textrm{as} \quad A \to \infty.
\end{align}
In fact,  we have 
\begin{align}\label{limlim}
\E_{\la t}^{t-A}(\vec \psi(t)) \to 0 \quad \textrm{as}\quad  t, \, A \to \infty \quad \textrm{for} \quad A \le (1- \la)t.
\end{align}
\end{prop}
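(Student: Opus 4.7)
The plan is to follow the strategy of Shatah--Tahvildar-Zadeh in \cite[Section 3.1]{STZ}, which adapts the Christodoulou--Tahvildar-Zadeh scheme \cite{CTZ1} to the equivariant setting. The key point is that the outer boundary $r = t-A$ of the annulus is null (giving sign-definite flux), while the inner boundary $r = \la t$ is timelike and requires a Morawetz-type estimate.

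First I would rewrite \eqref{cp} in the divergence form $\p_t(r e_0) = \p_r(2 r \psi_t \psi_r)$ with $e_0$ as in \eqref{en psi}, and apply the divergence theorem on the truncated null cone $\{t_0 \le t \le T,\; 0 \le r \le t-A\}$. This yields the null flux identity
\begin{align*}
\E_0^{T-A}(\vec\psi(T)) - \E_0^{t_0-A}(\vec\psi(t_0)) = \int_{t_0}^T (t-A)\Bigl[(\psi_t + \psi_r)^2 + \tfrac{\sin^2\psi}{r^2}\Bigr]_{r = t-A}\, dt \;\ge\; 0,
\end{align*}
so that $T \mapsto \E_0^{T-A}(\vec\psi(T))$ is non-decreasing and bounded by $\E(\vec\psi)$. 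Hence $E_*(A) := \lim_{T\to\infty} \E_0^{T-A}(\vec\psi(T))$ exists; applying the same computation between two null lines $r = t-A$ and $r = t-B$ with $A < B$ shows $E_*(A)$ is non-increasing in $A$, so it converges to some $E_\infty \in [0, \E(\vec\psi)]$ as $A\to\infty$. By total energy conservation, the ``near light-cone'' energy satisfies $\E_{t-A}^\infty(\vec\psi(t)) \to \E(\vec\psi) - E_*(A)$ as $t \to \infty$, which tends to $\E(\vec\psi) - E_\infty$ as $A \to \infty$.

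For the inner boundary I would invoke the CTZ/STZ conformal-multiplier argument: multiplying \eqref{cp} by a first-order operator applied to $\psi$ (built from the conformal Killing field $(t^2+r^2)\p_t + 2tr\p_r$, adapted to the exterior of $r = \la t$) and integrating by parts, one obtains a spacetime bulk bound of the schematic form
\begin{align*}
\int_{t_0}^\infty \int_{\la t}^{t} \Bigl[(\psi_t - \psi_r)^2 + \tfrac{\sin^2\psi}{r^2}\Bigr] \frac{r}{t}\, dr\, dt \;\lesssim_\la\; \E(\vec\psi).
\end{align*}
This integrability, combined with the (now non-sign-definite) flux identity on the timelike line $r = \la t$, forces $\E_{\la t}^\infty(\vec\psi(t))$ to converge as $t \to \infty$. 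By causality and comparison against the null-flux limit above, this limit must equal $\E(\vec\psi) - E_\infty$. Subtracting the two convergent quantities then yields
\begin{align*}
\E_{\la t}^{t-A}(\vec\psi(t)) = \E_{\la t}^\infty(\vec\psi(t)) - \E_{t-A}^\infty(\vec\psi(t)) \;\longrightarrow\; E_*(A) - E_\infty \quad (t\to\infty),
\end{align*}
which tends to $0$ as $A \to \infty$; this is \eqref{limsup la}. The joint statement \eqref{limlim} then comes from tracking uniformity in $\la$ (as long as $\la$ stays away from $1$) and in the constraint $A \le (1-\la)t$, under which all the bounds above are uniform.

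\emph{The main obstacle} is making the conformal-multiplier Morawetz estimate precise in the equivariant setting, where the nonlinear potential $\sin^2\psi/r^2$ requires careful treatment of boundary and lower-order contributions, and then upgrading the resulting time-integrability (essentially a sequential/integrated smallness statement) to \emph{pointwise-in-$t$} convergence of the annular energy. For this last step the Shatah--Struwe topological inequality \eqref{b R} is crucial: it rules out rapid oscillations of $r \mapsto G(\psi(t,r))$ that would otherwise be compatible with time-averaged but not pointwise smallness of the energy.
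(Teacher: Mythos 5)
Your overall framework is the right one and matches the paper's: the Christodoulou--Tahvildar-Zadeh / Shatah--Tahvildar-Zadeh scheme, with the sign-definite outgoing null flux along $r = t-A$ and a conformal multiplier to control the region near the timelike boundary $r=\la t$. The first block of your argument (the divergence identity, monotonicity of $T \mapsto \E_0^{T-A}(\vec\psi(T))$, existence and monotonicity of $E_*(A)$) is correct. However, the way you assemble the pieces differs from the paper and has two genuine gaps.

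First, the identification $\lim_{t\to\infty}\E_{\la t}^\infty(\vec\psi(t)) = \E(\vec\psi) - E_\infty$ ``by causality and comparison'' is circular. Causality only gives the inequality $\E_{\la t}^\infty(\vec\psi(t)) \ge \E_{t-A}^\infty(\vec\psi(t))$ for $A \le (1-\la)t$, hence $L \ge \E(\vec\psi) - E_\infty$ in your notation; the reverse inequality \emph{is} the proposition. The reverse direction is exactly where the multiplier argument must enter, but its correct output is not merely finiteness of a bulk spacetime integral: it is the decay of the \emph{restricted null flux} $\EE_\la(u) := \int_{\frac{1+\la}{1-\la}u}^{\infty} \al^2(u,v)\,dv \to 0$ as $u \to \infty$. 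That is the content of \cite[Lemma~1]{CTZ1}, transported to the equivariant case via the multiplier inequality in \cite[proof of Lemma~8.2]{SS}. The bulk estimate alone only produces smallness along a sequence of times; upgrading it to the pointwise null-flux decay is already nontrivial and is done inside the CTZ lemma, not afterward. Second, your proposed mechanism for the final upgrade to pointwise-in-$t$ smallness --- the topological inequality \eqref{b R} --- is neither used nor needed for this proposition. The pointwise bound comes directly from Stokes' theorem on the quadrilateral $\Om$ in $(u,v)$-coordinates with vertices $X_1 = ((1-\la)t, (1+\la)t)$, $X_2 = (A, 2t-A)$, $X_3 = (A, \bar v)$, $X_4 = ((1-\la)t, \bar v)$: after sending $\bar v \to \infty$ and dropping the (favorably signed) flux through $u=A$, one gets
\begin{align*}
\E_{\la t}^{t-A}(\vec\psi(t)) \;\le\; \EE_{\la}\bigl((1-\la)t\bigr) + \FF(A),
\end{align*}
from which both \eqref{limsup la} and the joint statement \eqref{limlim} are immediate consequences of $\EE_\la(u)\to 0$ and $\FF(u)\to 0$. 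This avoids having to prove that $\E_{\la t}^\infty(\vec\psi(t))$ itself converges, which your subtraction argument requires.
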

We note that Proposition~\ref{ctzcor1} is a refinement of  \cite[$(3.4)$]{STZ}, see also \cite[Corollary $1$]{CTZ1} where the case of spherically symmetric wave maps is considered. To prove this result, we follow \cite{CTZ1}, \cite{STZ}, and \cite{SS} and introduce the following quantities: 
\begin{align*}
&e(t,r):= \psi_t^2(t, r) + \psi_r^2(t, r) + \frac{\sin^2(\psi(t,r))}{r^2}\\
&m(t,r):= 2 \psi_t(t, r) \psi_r(t, r).
\end{align*}
Observe that with this notation the energy identity becomes: 
\begin{align}
\p_t e(t, r)= \frac{1}{r} \p_r\left( r\, m(t,r)\right),
\end{align}
which we can conveniently rewrite as 
\begin{align}\label{en id}
\p_t(r e(t, r))- \p_r(r\,m(t, r))=0.
\end{align}
Using the notation in \cite{CTZ1}, we set 
\begin{align*}
\al^2(t, r):= r\, (e(t,r)+m(t, r))\\
\be^2(t, r):= r(e(t,r)- m(t, r))
\end{align*}
and we define null coordinates 
\begin{align*}
u= t-r, \quad v=t+r.
\end{align*}
Next, for $0 \le \la <1$ set
\begin{align}
&\EE_{\la}(u):= \int_{\frac{1+\la}{1-\la} u}^{\infty} \al^2(u, v) \, dv \label{ee def}\\
&\FF(u_0, u_1):= \lim_{v \to \infty} \int_{u_0}^{u_1} \be^2(u, v) \, du\label{ff def}.
\end{align}
Also, let $\CC_{\rho}^{\pm}$ denote the interior of the forward (resp. backward) light-cone with vertex at $(t, r)=(\rho, 0)$ for $\rho>0$ in $(t, r)$ coordinates.

As in \cite[Section $3.1$]{STZ}, one can show that the integral in \eqref{ee def} and the limit in \eqref{ff def} exist for a wave map of finite energy, see also \cite[Section $2$]{CTZ1} for the details of the argument for the spherically symmetric case. 

By integrating the energy identity \eqref{en id} over the region $(\CC_{u_0}^{+} \backslash \CC_{u_1}^{+}) \cap \CC^{-}_{v_0}$, where $0<u_0<u_1<v_0$, we obtain the identity 
\begin{align*}
\int_{u_0}^{u_1} \be^2(u, v) \, du = \int_{u_0}^{v_0} \al^2(u_0, v) \, dv - \int_{u_1}^{v_0} \al^2(u_1, v) \, dv.
\end{align*}
Letting $v_0 \to \infty$ we see that 
\begin{align}\label{f=e-e}
0 \le \FF(u_0, u_1) = \EE_0(u_0)- \EE_0(u_1),
\end{align}
which shows that $\EE_0$ is decreasing. Next, note that 
$$\FF(u, u_2) = \FF(u, u_1)+ \FF(u_1, u_2) \ge \FF(u, u_1)$$ 
for $u_2>u_1$, and thus $\FF(u, u_1)$ is increasing in $u_1$. $\FF(u, u_1)$ is also bounded above by $\EE(u)$ so 
\begin{align*}
\FF(u):= \lim_{u_1 \to \infty} \FF(u, u_1)
\end{align*}
exists and, as in \cite{STZ}, \cite{CTZ1}, we have 
\begin{align}\label{ff to 0}
\FF(u) \to 0 \quad \textrm{as} \quad u \to \infty.
\end{align}
Finally note that the argument in \cite[Lemma $1$]{CTZ1} shows that for all $0 < \la< 1$ we have  
\begin{align}\label{e la to 0}
\EE_{\la}(u) \to 0 \quad \textrm{as} \quad u \to \infty,
\end{align}
 which is stated in \cite[$(3.3)$]{STZ}. To deduce \eqref{e la to 0}, follow the exact argument in \cite[proof of Lemma~$1$]{CTZ1} using the relevant multiplier  inequalities  for equivariant wave maps established in \cite[proof of Lemma $8.2$]{SS} in place of \cite[equation $(6)$]{CTZ1}. We can now prove Proposition~\ref{ctzcor1}. 

\begin{proof}[Proof of Proposition~\ref{ctzcor1}] Fix $\la \in (0, 1)$ and $\de>0$. Find $A_0$ and $T_0$ large enough so that 
\begin{align*}
0 \le \FF(A) \le \de, \quad 0 \le \EE_{\la}((1-\la) t) \le \de
\end{align*}
for all $A \ge A_0$ and $t \ge T_0$. In $(u, v)$--coordinates consider the points 
\begin{align*}
&X_1= ((1-\la) t, (1+ \la) t), \quad
X_2=(A, 2t-A)\\
&X_3=(A, \bar v), \quad
X_4=((1-\la) t, \bar v)
\end{align*}
where $\bar v $ is very large. Integrating the energy identity \eqref{en id} over the region $\Om$ bounded by the line segments $X_1X_2$, $X_2X_3$, $X_3X_4$, $X_4X_1$ we obtain, 
\begin{align*}
\E_{\la t}^{t-A}( \vec \psi(t)) = - \int_{2t-A}^{\bar v} \al^2(A, v) \, dv + \int_A^{(1-\la)t} \be^2(u, \bar v) \, du + \int_{(1+\la) t}^{\bar v} \al^2((1-\la)t, v) \, dv.
\end{align*}
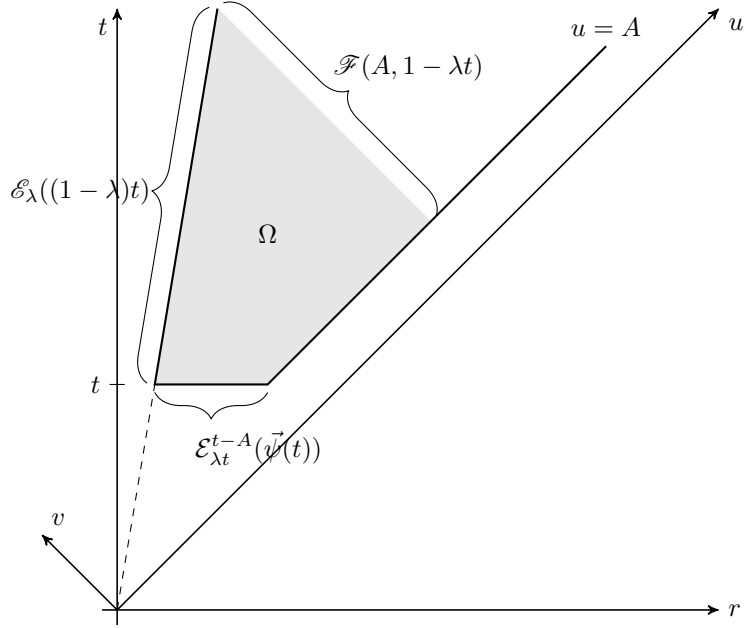
\begin{figure}
\begin{tikzpicture}[
	>=stealth',
	axis/.style={semithick,->},
	coord/.style={dashed, semithick},
	yscale = 1,
	xscale = 1]
	\newcommand{\xmin}{0};
	\newcommand{\xmax}{8};
	\newcommand{\ymin}{0};
	\newcommand{\ymax}{8};
	\newcommand{\ta}{3};
	\newcommand{\lamdba}{6};
	\newcommand{\fsp}{0.2};
	\draw [axis] (\xmin-\fsp,0) -- (\xmax,0) node [right] {$r$};
	\draw [axis] (0,\ymin-\fsp) -- (0,\ymax) node [below left] {$t$};
	\draw [axis] (0,0) -- (\xmax, \xmax) node [below right] {$u$};
	\draw [axis] (0,0) -- (-1,1) node [above right] {$v$};
	\draw [dashed] (0,0) -- (\ta/\lamdba, \ta);
	\draw (-0.1, \ta) node [left] {$t$} -- (0.1,\ta);
	\filldraw[color=light-gray1]  (\ta-1,\ta) -- (\ta/\lamdba, \ta) -- (\ymax/\lamdba, \ymax) -- (50/12, 62/12); 
	\draw [decorate,decoration={brace,mirror,amplitude=10, raise=3}] (50/12, 62/12) -- (\ymax/\lamdba, \ymax) node [midway, xshift=32, yshift=18] {$\FF(A, 1-\lambda t)$};
	\draw [decorate,decoration={brace,amplitude=10, raise=3}] (\ta/\lamdba, \ta)  -- (\ymax/\lamdba, \ymax) node [midway, xshift=-40, yshift=2] {$ \EE_{\lambda}((1-\lambda) t)$};
	\draw [decorate,decoration={brace, mirror, amplitude=10, raise=3}] (\ta/\lamdba, \ta)  -- (\ta-1, \ta) node [midway, yshift=-24, xshift=18] {$\scriptsize \E_{\la t}^{t-A}(\vec \psi(t))$};
	\draw [thick] (\xmax - 1.5,\xmax - 0.5) node [above] {$u = A$} -- (\ta-1,\ta)  -- (\ta/\lamdba, \ta) -- (\ymax/\lamdba, \ymax); 
	\draw (\ta-1,\ta+2) node {$\Om$};
\end{tikzpicture}
\caption{\label{fig:5} The quadrangle $\Om$ over which the energy identity is integrated is the gray region above.}
\end{figure}
Letting $\bar v \to \infty$ above and recalling that $\FF(u, u_1)$ is increasing in $u_1$ we have 
\begin{align*} 
\E_{\la t}^{t-A}( \vec \psi(t))  &\le  \EE_{\la}( (1-\la)t) + \FF(A, (1-\la)t)\\
& \le  \EE_{\la}( (1-\la)t)+ \FF(A).
\end{align*} 
The proposition now follows from \eqref{e la to 0} and \eqref{ff to 0}.
\end{proof}

We will also need the following corollaries of Proposition~\ref{ctzcor1}:
\begin{cor}\label{ces lem}
Let $\vec \psi(t) \in \HH$ be a global wave map. Then 
\begin{align}\label{ces}
\limsup_{T \to \infty} \frac{1}{T} \int_A^T \int_0^{t-A} \psi_t^2(t, r) \, r\, dr\, dt \to 0 \quad \textrm{as} \quad A \to \infty.
\end{align}
\end{cor}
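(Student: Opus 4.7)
My plan is to split the spatial integral at $r = \la t$ for an auxiliary parameter $\la \in (0, 1)$:
\begin{align*}
\int_0^{t-A} \psi_t^2 \, r\, dr \;=\; \int_0^{\la t}\psi_t^2\,r\,dr \;+\; \int_{\la t}^{t-A}\psi_t^2\, r\, dr.
\end{align*}
The outer piece is immediately handled by Proposition~\ref{ctzcor1}: since $\int_{\la t}^{t-A}\psi_t^2\, r\,dr \le \E_{\la t}^{t-A}(\vec \psi(t))$ and the right-hand side tends to $0$ as $t, A \to \infty$ (for $A \le (1-\la)t$), splitting the time interval as $[A, T] = [A, T_0] \cup [T_0, T]$ with $T_0$ chosen large enough that $\E_{\la t}^{t-A}(\vec\psi(t)) \le \e$ for $t \ge T_0$, and bounding the contribution on $[A, T_0]$ crudely by $\E(\vec\psi)$, one obtains for the outer piece a Cesaro contribution of at most $\e + \E(\vec\psi)\cdot T_0/T$, which tends to $\e$ as $T \to \infty$.

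For the inner piece $\int_0^{\la t}\psi_t^2 r\, dr$, the pointwise bound $\le \E(\vec\psi)$ is $O(1)$ and insufficient for Cesaro decay on its own. To extract the needed time-averaged cancellation, I would deploy a virial-type identity. Specifically, multiply \eqref{cp} by $r(\psi - Q_{\mu(t)})$, where $\mu(t)$ is an appropriately chosen modulation scale rendering $\psi - Q_{\mu(t)}$ a decaying perturbation in $\HH_0$, and integrate over $[0, \la t]$. Using integration by parts and the identity $\sin(2\psi)\cdot \psi_r = \p_r(\sin^2\psi)$ produces a relation of the form
\begin{align*}
\int_0^{\la t} \psi_t^2 \, r \, dr \;=\; \frac{d}{dt}\Bigl(\int_0^{\la t} r\,(\psi - Q_{\mu(t)})\,\psi_t\, dr\Bigr) \;+\; \mathcal{R}(t) \;+\; (\text{flux at }r = \la t),
\end{align*}
where the first term telescopes upon $t$-integration into an $O(1)$ quantity (by Cauchy--Schwarz and the coercivity estimate), disappearing after dividing by $T$; the flux through $r = \la t$ is controlled by Proposition~\ref{ctzcor1} applied on a slightly narrower annulus; and the remainder $\mathcal{R}(t)$, which involves $\int_0^{\la t}[(\psi-Q_{\mu})_r^2 - (\text{potential})]\, r\, dr$, vanishes on the harmonic map by the Pohozaev identity $\int r Q_r^2\, dr = \int \sin^2 Q/r\, dr$ and is otherwise controlled by the dispersive remainder, giving $\mathcal{R}(t) = o(T)$ after averaging.

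The central obstacle is the degree-one asymptotic condition $\psi(t, \infty) = \pi$, which precludes direct use of the natural virial $\int r\,\psi\,\psi_t\, dr$ (it would diverge as $t \to \infty$). The renormalization by the soliton profile $Q_{\mu(t)}$ circumvents this, but hinges on a careful choice of modulation $\mu(t)$ together with the coercivity of the energy near $Q$ from \cite[Lemma~2.5]{CKLS1} to keep the renormalized virial uniformly bounded. Once this is in place, combining the inner and outer estimates yields the Cesaro decay stated in the corollary.
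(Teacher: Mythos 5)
Your handling of the outer region $[\la t,\,t-A]$ is exactly the paper's and is fine. The problem is the inner region, where your proposed virial is wrong in kind, not just in detail. First, Corollary~\ref{ces lem} is stated for \emph{arbitrary} global $\vec\psi \in \HH$ --- any degree, any energy --- so there is in general no harmonic-map profile $Q_{\mu(t)}$ to renormalize against; a $Q$-based virial simply cannot be the mechanism. Second, even in the degree-one case with $\E(\vec\psi) < 3\E(Q)$, the premise that $\psi - Q_{\mu(t)}$ is a ``decaying perturbation in $\HH_0$'' is not available: no such decomposition has been established at this point in the argument (Corollary~\ref{ces lem} is one of the ingredients used \emph{to obtain} it, via Lemma~\ref{sig lem} and Theorem~\ref{struwe}), so the reasoning would be circular. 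And the coercivity of \cite[Lemma~2.5]{CKLS1} only makes $\|\psi - Q_\mu\|_H$ comparable to $\E(\vec\psi) - \E(Q)$, which is a fixed constant, not something that tends to zero; nothing ``decays.'' Third, the extra term $\int r\,\dot\mu\,(\partial_\mu Q_\mu)\psi_t\,dr$ produced by the time-dependence of $\mu$ requires a modulation estimate for $\dot\mu(t)$ that you have not supplied.

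The paper's virial is much more elementary and makes no reference to $Q$. Setting $m = 2\psi_t\psi_r$, one has the identity
$\p_t(r^2 m) - \p_r\bigl(r^2\psi_t^2 + r^2\psi_r^2 - \sin^2\psi\bigr) + 2r\psi_t^2 = 0$.
Differentiating $\int_0^\infty m(t,r)\,\chi(r/t)\,r^2\,dr$ in $t$, with $\chi$ a cutoff to $r \lec t$, and using Proposition~\ref{ctzcor1} to bound the cutoff-derivative error terms by $O(\de)$, produces $-2\int \psi_t^2\,\chi(r/t)\, r\, dr + O(\de)$ directly. Since $|\int m\,\chi\,r^2\,dr| \lec \de t$ (again from Proposition~\ref{ctzcor1}), integrating over $[0,T]$ and dividing by $T$ gives the result. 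This works for any finite-energy global wave map because $\sin^2\psi$ is bounded and no decomposition of $\psi$ is needed. If you want to proceed along your lines you would need to first prove the soliton decomposition by other means; better to use the momentum multiplier.
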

\begin{proof} We will use the following virial identity for solutions to \eqref{cp}: 
\begin{align}\label{vir}
\p_t( r^2 m) - \p_r(r^2\psi_t^2 + r^2 \psi_r^2 -\sin^2 \psi) + 2r \psi_t^2 = 0.
\end{align}
Now, fix $\de>0$ so that $\de<1/3$ and find $A_0, T_0$ so that for all $A \ge A_0$ and $t \ge T_0$ we have 
\begin{align*}
\E_{\de t}^{t-A}( \vec \psi(t)) \le \de.
\end{align*}
Then,
\begin{align*}
\int_0^{\de t} e(t, r) r^2 \, dr \le \E(\vec \psi(t)) \de t
\end{align*}
and as long as we ensure that $A \le 1/3 t$, we obtain
\begin{align*}
\int_{\de t}^{2t/3} e(t, r) \, r^2 \, dr \le \de t.
\end{align*}
This implies that 
\begin{align*}
\int_{0}^{2t/3} e(t, r) \, r^2 \, dr \le C\de t, \quad \textrm{and} \quad \int_{0}^{2t/3} e(t, r) \, r^3 \, dr \le C \de t^2.
\end{align*}
Let $\chi: \R \to [0,1]$ be a smooth cut-off function such that $\chi(x) = 1$ for $\abs{x} \le 1/3$, $\chi(x)=0$ for $\abs{x} \ge 2/3$ and $\chi^{\prime}(x) \le 0$. Then, using the virial identity \eqref{vir} we have
\begin{align*}
\frac{d}{dt} \int_0^{\infty} m(t, r) \chi(r/t) \, r^2 \, dr&= \int_0^{\infty} \p_t(r^2 m(t, r)) \, \chi(r/t) \, dr -\frac{2}{t^2} \int^{\I}_0 \psi_t \psi_r \, r^3 \chi^{\prime}(r/t) \, dr\\
&= \int_0^{\infty} \p_r( r^2( \psi_t^2 + \psi_r^2)- \sin^2(\psi)) \chi(r/ t) \, dr \\
& \quad - 2 \int_0^{\infty} \psi_t^2(t, r)\chi(r/t) \, r\, dr + O(\de)\\
&=  \frac{1}{t^2}\int_0^{\infty} ( r^2( \psi_t^2 + \psi_r^2)- \sin^2(\psi)) \chi^{\prime}(r/t) \,r \,  dr\\
&\quad - 2 \int_0^{\infty} \psi_t^2(t, r)\chi(r/t) \, r\, dr + O(\de)\\
&= - 2 \int_0^{\infty} \psi_t^2(t, r)\chi(r/t) \, r\, dr + O(\de).
\end{align*}
Integrating in $t$ between $0 $ and $T$ yields 
\begin{align*}
\int_0^T \int_0^{\infty} \psi_t^2(t, r) \chi(r/t) \, r \, dr \,dt \le C\de T
\end{align*}
with an absolute constant $C>0$. By the definition of $\chi(x)$ this implies 
\begin{align*}
\int_0^T \int_0^{t/3} \psi_t^2(t, r) \, r \, dr \,dt \le C\de T.
\end{align*}
Next, note that we have 
\begin{align*}
\int_A^T \int_{t/3}^{t-A} \psi_t^2(t, r)  \, r \,dr\, dt &\le  \int_A^{T_0} \E(\vec \psi) \, dt + \int_{T_0}^T \int_{t/3}^{t-A} e(t, r) \, r \, dr \, dt\\
&\le (T_0-A) \E(\vec \psi) + (T-T_0) \de.
\end{align*}
Therefore, 
\begin{align*} 
\frac{1}{T} \int_A^T \int_0^{t-A} \psi_t^2 (t, r) \, r \, dr \, dt \le C \de + \frac{T_0}{T} \E(\vec \psi)
\end{align*}
Hence, 
\begin{align*}
\limsup_{T \to \infty} \frac{1}{T} \int_A^T \int_0^{t-A} \psi_t^2 (t, r) \, r \, dr \, dt \le C \de 
\end{align*}
for all $A \ge A_0$, which proves \eqref{ces}.  
\end{proof}

\begin{cor} \label{psi to pi la} Let $\vec \psi(t) \in \HH$ be a smooth global wave map. Recall that $\vec \psi(t) \in \HH$ implies that there exists $k \in \Z$ such that for all $t$ we have $\psi(t, \infty)=  k \pi$. Then for any $\la>0$ we have 
\begin{align}
\|\psi(t)-  \psi(t, \infty)\|_{L^{\infty}(r \ge \la t)} \to 0 \quad \textrm{as} \quad t \to \infty.
\end{align}
\end{cor}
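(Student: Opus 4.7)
The plan is to split $\{r \geq \la t\}$ into three pieces---an inner piece $[\la t, t - A]$, a bounded strip $[t - A, t + R]$ straddling the light cone, and an outer piece $[t + R, \infty)$---and control each separately. The unifying tool is the Shatah--Struwe estimate \eqref{b R}, which upon letting $r_2 \to \infty$ and using $\psi(t, \infty) = k\pi$ reads
\begin{equation*}
|G(\psi(t, r)) - G(k\pi)| \leq \tfrac12 \E_r^\infty(\vec\psi(t)).
\end{equation*}
Since $G : \R \to \R$ is continuous and strictly increasing, and the total variation bound from \eqref{b R} on $[0, \infty)$ confines $\psi(t, r)$ to a bounded interval depending only on $\E(\vec\psi)$ and $|k|$, the uniform continuity of $G^{-1}$ on that interval reduces matters to producing suitable smallness of the localized energies, or appropriate pointwise comparisons, on each of the three pieces.

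For the outer piece $r \geq t + R$, I would integrate the energy identity \eqref{en id} over the exterior $\{r' \geq R + t'\}$ of the forward light cone from $(0, R)$, obtaining the monotonicity (by the same type of computation used in Proposition~\ref{ctzcor1} along $u = -R$)
\begin{equation*}
\E_{t + R}^\infty(\vec\psi(t)) \leq \E_R^\infty(\vec\psi(0)),
\end{equation*}
the difference being the nonnegative flux $\int_0^t \alpha^2(s, s + R)\,ds$. Since $\E(\vec\psi) < \infty$, the right-hand side tends to $0$ as $R \to \infty$. Fixing $R$ large enough (independent of $t$) then forces $|\psi(t, r) - k\pi|$ arbitrarily small for every $r \geq t + R$, uniformly in $t$.

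For the middle strip $t - A \leq r \leq t + R$, whose width $A + R$ stays bounded, I would argue directly via Cauchy--Schwarz with the weight $r$:
\begin{equation*}
|\psi(t, r) - \psi(t, t + R)| \leq \Bigl(\int_r^{t + R} \psi_s^2(t, s)\, s \, ds\Bigr)^{1/2} \Bigl(\int_r^{t + R} \frac{ds}{s}\Bigr)^{1/2} \leq \sqrt{\E(\vec\psi)}\, \sqrt{\log\tfrac{t + R}{t - A}}.
\end{equation*}
Once $A$ and $R$ are fixed, $\log((t + R)/(t - A)) \to 0$ as $t \to \infty$, so combined with the outer estimate at $r = t + R$ this gives $|\psi(t, r) - k\pi|$ small throughout the strip, and in particular at $r = t - A$.

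Finally, for the inner piece $\la t \leq r \leq t - A$, Proposition~\ref{ctzcor1} provides $\E_{\la t}^{t - A}(\vec\psi(t)) \to 0$ as $A, t \to \infty$ with $A \leq (1 - \la) t$; applying \eqref{b R} between $r$ and $t - A$ makes $|G(\psi(t, r)) - G(\psi(t, t - A))|$ arbitrarily small, and composing with the strip estimate at $r = t - A$ and the uniform continuity of $G^{-1}$ upgrades this to $|\psi(t, r) - k\pi| < \varepsilon$ for all such $r$. The principal obstacle is the middle strip, whose localized energy need not vanish at all (outgoing $2$--dimensional radiation tends to concentrate near the light cone $r = t$); the decisive saving is the vanishing of the logarithmic Cauchy--Schwarz factor as $t \to \infty$ for fixed width $A + R$, which lets a merely bounded energy on the strip still yield pointwise decay of $\psi - k\pi$.
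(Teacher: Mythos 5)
Your proof is correct and achieves the result by a route that differs in one meaningful respect from the paper's. The paper first reduces, via a density argument, to data with compactly supported $(\p_r\psi_0,\psi_1)\subset B(0,R)$; finite speed of propagation then confines $\psi_r(t)$ to $r\le R+t$, and a Cauchy--Schwarz bound (Lemma~\ref{psi-pi A lem}) controls $|\psi(t,r)-k\pi|$ for all $r\ge t-A$ at once, with a diagonalization $A_n=\sqrt{t_n}$ to let the window width grow slowly. You instead avoid the approximation step entirely by adding a third, outermost region $r\ge t+R$ and invoking exterior energy monotonicity $\E_{t+R}^\infty(\vec\psi(t))\le \E_R^\infty(\vec\psi(0))$ to make it small for $R$ fixed large; your Cauchy--Schwarz estimate on the bounded strip $[t-A,t+R]$ then only needs $A$ and $R$ fixed, since $\log\bigl((t+R)/(t-A)\bigr)\to 0$ as $t\to\infty$. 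The treatment of the inner annulus $[\la t,\,t-A]$ via Proposition~\ref{ctzcor1}, \eqref{b R}, and continuity of $G^{-1}$ is the same in both. Your version trades the density argument for a (standard but genuine) use of exterior energy monotonicity, and dispenses with the diagonal choice $A_n=\sqrt{t_n}$ in favor of sending $t\to\infty$ for fixed $A,R$ first; both routes are sound and of comparable length.
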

Before proving Corollary~\ref{psi to pi la}, we can combine Proposition~\ref{ctzcor1} and Corollary~\ref{psi to pi la} to immediately deduce the following result.  
\begin{cor} \label{HxL2 for} Let $\vec \psi(t) \in \HH$ be a global wave map. Let $0 < \la<1$. Then we have 
\begin{align}\label{limsup H}
\limsup_{t \to \infty}\|\vec \psi(t)- ( \psi(t, \infty), 0)\|_{H \times L^2( \la t \le r \le t-A)}^2 \to 0\quad \textrm{as} \quad A \to \infty.
\end{align}
\end{cor}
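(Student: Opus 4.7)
The plan is to reduce the target quantity to the ordinary localized energy $\E_{\la t}^{t-A}(\vec\psi(t))$, whose vanishing is already given by Proposition~\ref{ctzcor1}, and then handle the single mismatched term using Corollary~\ref{psi to pi la}. Writing $k\pi := \psi(t,\infty)$, the squared $H\times L^2$ norm on the annulus is
\[
\int_{\la t}^{t-A}\!\left(\psi_t^2 + \psi_r^2 + \frac{(\psi-k\pi)^2}{r^2}\right) r\,dr.
\]
The first two terms are bounded by $\E_{\la t}^{t-A}(\vec\psi(t))$, which tends to $0$ by Proposition~\ref{ctzcor1}. So the only real task is to control the potential term, for which Proposition~\ref{ctzcor1} supplies only $\sin^2(\psi)/r^2$ rather than $(\psi-k\pi)^2/r^2$.

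To bridge this gap I would apply Corollary~\ref{psi to pi la}: given any $\eta>0$ there exists $T_0=T_0(\eta,\la)$ such that
\[
\|\psi(t,\cdot)-k\pi\|_{L^\infty(r\ge \la t)} < \eta \qquad \textrm{for all } t\ge T_0.
\]
Choose $\eta$ small enough (say $\eta = \pi/3$) so that on the interval $[-\eta,\eta]$ one has the elementary bound $|\sin(x)|\ge \tfrac{2}{\pi}|x|$. Since $\sin(\psi-k\pi) = (-1)^k\sin(\psi)$, this gives
\[
\sin^2(\psi(t,r)) = \sin^2\!\bigl(\psi(t,r)-k\pi\bigr) \ge \tfrac{4}{\pi^2}\bigl(\psi(t,r)-k\pi\bigr)^2
\]
pointwise for $r\ge \la t$ and $t\ge T_0$. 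Integrating against $r^{-1}\,dr$ over $[\la t, t-A]$ yields
\[
\int_{\la t}^{t-A}\!\frac{(\psi(t,r)-k\pi)^2}{r^2}\, r\,dr \le \tfrac{\pi^2}{4}\int_{\la t}^{t-A}\!\frac{\sin^2(\psi(t,r))}{r^2}\,r\,dr \le \tfrac{\pi^2}{4}\,\E_{\la t}^{t-A}(\vec\psi(t)).
\]

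Combining the two pieces,
\[
\|\vec\psi(t)-(\psi(t,\infty),0)\|_{H\times L^2(\la t\le r\le t-A)}^2 \le \bigl(1+\tfrac{\pi^2}{4}\bigr)\E_{\la t}^{t-A}(\vec\psi(t))
\]
for all $t\ge T_0$, and Proposition~\ref{ctzcor1} finishes the argument by sending $t\to\infty$ then $A\to\infty$. The only conceptual obstacle is the pointwise smallness of $\psi-k\pi$ needed to replace $\sin^2$ by a quadratic, which is precisely what Corollary~\ref{psi to pi la} provides; once that is in hand the proof reduces to a one-line comparison and a direct appeal to Proposition~\ref{ctzcor1}.
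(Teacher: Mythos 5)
Your proof is correct and follows essentially the same route as the paper: use Corollary~\ref{psi to pi la} to get pointwise smallness of $\psi - k\pi$ on $r \ge \la t$ for large $t$, deduce $(\psi - k\pi)^2 \lesssim \sin^2\psi$ there, and then appeal directly to Proposition~\ref{ctzcor1}. The only difference is that you make the comparison constant explicit ($\pi^2/4$) where the paper leaves it as an unspecified $C$; both are fine.
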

\begin{proof} Say $\vec \psi(t) \in \HH_k$. Observe that  Corollary~\ref{psi to pi la} shows that for $t_0$ large enough we have, say, $$\abs{\psi(t, r)- k \pi} \le \frac{\pi}{100}$$  for all $t \ge t_0$ and $r \ge \la t$. This in turn   implies that for $t \ge t_0$ we can find a $C >0$ such that 
\begin{align*}
\abs{ \psi(t, r)-k\pi}^2 \le C \sin^2( \psi(t, r)) \quad \forall\, t \ge t_0, \, r \ge \la t.
\end{align*}
Now \eqref{limsup H} follows directly from \eqref{limsup la}. 
\end{proof}

The first step in the proof of Corollary~\ref{psi to pi la} is the following lemma:
\begin{lem}\label{psi-pi A lem}
Let $\vec \psi(t) \in \HH$ be a smooth global wave map. Let $R>0$ and suppose that the initial data $\vec \psi(0)= ( \psi_0, \psi_1) \in \HH_1$ satisfies $\supp(\p_r \psi_0),  \supp( \psi_1) \subset B(0, R)$. Then for any $t \ge 0$ and for any $A <t$  we have
\begin{align}\label{psi-pi A}
\| \psi(t)- \psi(t, \infty) \|_{L^{\infty}( r \ge t-A)} \le \sqrt{\E( \vec \psi)} \sqrt{\frac{A+R}{t-A}}.
\end{align}
\end{lem}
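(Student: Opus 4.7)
The plan is to combine finite propagation speed with a Cauchy--Schwarz estimate against the conserved energy. Since $\supp(\p_r \psi_0)$ and $\supp(\psi_1)$ lie in $B(0,R)$, the initial data $\psi_0$ is constant equal to $\psi_0(\infty)$ on $\{r > R\}$ (with vanishing $\psi_1$ there). By finite propagation speed for the equivariant wave map equation \eqref{cp}, the solution $\psi(t,r)$ must then equal this same constant throughout the exterior of the forward light cone of $B(0,R)$, that is,
\begin{equation*}
\psi(t,r) = \psi_0(\infty) = \psi(t,\infty) \qquad \text{for all } r \geq t+R,
\end{equation*}
where the last equality uses that $\psi(t,\infty)$ is independent of $t$ (the topological degree is preserved).

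With this reduction, for any $r \in [t-A, t+R]$ I would write
\begin{equation*}
\psi(t,r)-\psi(t,\infty) = -\int_r^{t+R} \psi_\rho(t,\rho)\,d\rho,
\end{equation*}
because the integrand vanishes beyond $t+R$. Cauchy--Schwarz against the measure $\rho\,d\rho$ gives
\begin{equation*}
|\psi(t,r)-\psi(t,\infty)|^2 \leq \left(\int_r^{t+R} \psi_\rho^2\,\rho\,d\rho\right)\left(\int_r^{t+R}\frac{d\rho}{\rho}\right) \leq \E(\vec\psi)\,\log\frac{t+R}{r}.
\end{equation*}
For $r \geq t-A$, the elementary bound $\log(1+x) \leq x$ with $x = (A+R)/(t-A)$ yields
\begin{equation*}
\log\frac{t+R}{r} \leq \log\frac{t+R}{t-A} \leq \frac{A+R}{t-A},
\end{equation*}
which gives \eqref{psi-pi A} on this range. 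On the remaining range $r \geq t+R$, the difference is identically zero by the finite propagation step, so the bound is trivial.

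There is no serious obstacle: the only subtleties are verifying finite propagation speed (which follows from the standard energy identity \eqref{en id} applied in an exterior cone, noting that on $\{r>t+R\}$ the difference $\psi - \psi_0(\infty)$ solves an equation with a source vanishing where $\sin\psi = 0$ and can be compared to the constant solution via a backwards-cone energy estimate), and the fact that the integral $\int_r^{t+R}\psi_\rho^2\,\rho\,d\rho$ is bounded by the full energy $\E(\vec\psi)$, which is immediate from the definition \eqref{en psi}.
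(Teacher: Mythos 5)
Your proof is correct and follows essentially the same approach as the paper's: both arguments use finite propagation speed to deduce that $\psi(t,\cdot)$ is constant for $r \ge t+R$, then apply Cauchy--Schwarz against the radial measure $r\,dr$ together with the elementary bound $\log(1+x)\le x$. The only cosmetic difference is that the paper phrases the support statement as $\supp(\psi_r(t))\subset B(0,R+t)$ rather than as the equality $\psi(t,r)=\psi(t,\infty)$ for $r\ge t+R$.
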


\begin{proof} By the finite speed of propagation we note that for each $t \ge 0 $ we have $ \supp( \psi_r(t)) \subset B(0, R+t)$. Hence, for all $t \ge 0$ we have
\begin{align*}
\abs{\psi(t, r) - \psi(t, \infty)} &\le  \int_r^{\infty} \abs{\psi_r(t, r')} \, dr' \\
&\le  \left(\int_{r}^{R+t} \psi_r^2(t, r') \, r' \, dr'\right)^{\frac{1}{2}} \left( \int_r^{R+t} \frac{1}{r'} \, dr' \right)^{\frac{1}{2}}\\
&\le \sqrt{\E( \vec \psi)} \sqrt{ \log\left( \frac{t+R}{r}\right)}.
\end{align*}
Next observe that if $r \ge t-A$ then 
\begin{align*}
 \log\left( \frac{t+R}{r}\right) \le \log\left(1+  \frac{A+R}{r}\right) \le \log\left( 1+ \frac{A+R}{t-A}\right) \le \frac{A+R}{t-A}.
 \end{align*}
 This proves \eqref{psi-pi A}. 
\end{proof}

\begin{proof}[Proof of Corollary~\ref{psi to pi la}] Say $\psi(t) \in \HH_k$, that is $\psi(t, \infty)= k\pi$ for all $t$. First observe that by an approximation argument, it suffices to consider wave maps $\vec \psi(t) \in \HH_k$ with initial data $\vec \psi(0) = ( \psi_0, \psi_1) \in \HH_k$ with $$\supp(\p_r \psi_0), \supp(\psi_1) \subset B(0, R)$$ for $R>0$ arbitrary, but fixed.  Now, let $t_n \to \infty$ be any sequence and set $$A_n:= \sqrt{t_n}.$$ Then, for each $r \ge \la t_n$ we have 
\begin{align*} 
\abs{\psi(t_n, r) - k\pi} \le \|\psi(t_n)- k\pi\|_{L^{\infty}( \la t_n \le r \le t_n- A_n)} + \|\psi(t_n)- k \pi\|_{L^{\infty}(  r \ge t_n- A_n)}.
\end{align*}
By Lemma~\ref{psi-pi A lem} we know that 
\begin{align}\label{>An}
\|\psi(t_n)- k\pi\|_{L^{\infty}(  r \ge t_n- A_n)} \le \sqrt{\E(\psi)} \sqrt{ \frac{ \sqrt{t_n}+R}{t_n- \sqrt{t_n}}} \to 0 \quad \textrm{as} \quad n  \to \infty.
\end{align}
Hence it suffices to show that 
\begin{align*}
\|\psi(t_n)- k\pi\|_{L^{\infty}( \la t_n \le r \le t_n- A_n)} \to 0 \quad \textrm{as} \quad n \to \infty.
\end{align*}
To see this, first observe that \eqref{>An} implies that $$\psi(t_n, t_n-A_n) \to k\pi$$ as $n \to \infty$. Therefore it is enough to show that 
\begin{align}\label{enough}
\|\psi(t_n)- \psi(t_n, t_n-A_n)\|_{L^{\infty}( \la t_n \le r \le t_n- A_n)} \to 0 \quad \textrm{as} \quad n \to \infty.
\end{align}
With $G$ defined as in \eqref{G def} we can combine \eqref{b R} and Proposition~\ref{ctzcor1} to deduce that for all $r \ge \la t_n$ we have
\begin{align*}
\abs{G(\psi(t_n, r))- G( \psi(t_n, t_n-A_n))} \le \frac{1}{2} \E_{\la t_n}^{t_n -A_n}( \vec \psi(t_n))\to 0.
\end{align*}
as $ n \to \infty$. This immediately implies \eqref{enough} since $G$ is a continuous,  increasing function. 
 \end{proof}

\section{Profiles for global degree one solutions with energy below $3\E(Q)$} 

In this section we carry out the proof of Theorem~\ref{glob sol} $(2)$. We start by first deducing the conclusions along a sequence of times. To be specific, we establish the following proposition: 

\begin{prop}\label{global} Let $\psi(t) \in \HH_1$ be a global solution to \eqref{cp} with $$\E(\vec \psi) = \E(Q) + \eta< 3 \E(Q).$$ Then there exist a sequence of times $\tau_n \to \infty$, a sequence of scales $\la_n\ll \tau_n$, a  solution $\vec \fy_L(t) \in \HH_0$ to the linear wave equation \eqref{2d lin}, 
and a decomposition  
\begin{align}\label{dec seq}
\vec \psi(\tau_n) =   \vec \fy_L(\tau_n) + \left(Q\left(\cdot/\la_n\right), 0\right) + \vec\epsilon(\tau_n)
\end{align}
such that $\vec \epsilon(\tau_n) \in \HH_0$ and $\vec \epsilon(\tau_n)\to 0$  in $H \times L^2$ as $n \to \infty$.
\end{prop}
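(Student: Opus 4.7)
The strategy mirrors the one used in~\cite{CKLS1} for part~$(1)$ of Theorem~\ref{glob sol}, now with the exterior of the outgoing light cone playing the role of the interior of the backward cone. The first step is to extract a sequence $\tau_n \to \infty$ along which the interior of the solution is nearly stationary. By Corollary~\ref{ces lem} one finds $A_n \to \infty$ with $A_n/\tau_n \to 0$ and times $\tau_n \to \infty$ with
\[
\int_0^{\tau_n - A_n} \psi_t^2(\tau_n, r)\, r\, dr \to 0.
\]
For any fixed $\lambda \in (0, 1)$, Proposition~\ref{ctzcor1} then gives $\E_{\lambda \tau_n}^{\tau_n - A_n}(\vec\psi(\tau_n)) \to 0$, Corollary~\ref{psi to pi la} gives $\|\psi(\tau_n) - \pi\|_{L^\infty(r \ge \lambda \tau_n)} \to 0$, and Corollary~\ref{HxL2 for} yields the $H \times L^2$-vanishing of $\vec\psi(\tau_n) - (\pi, 0)$ on the annulus $\lambda\tau_n \le r \le \tau_n - A_n$. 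Thus up to an $H \times L^2$ error, $\vec\psi(\tau_n)$ splits into an interior piece on $r \le \lambda\tau_n$ (degree one, nearly static) and an exterior piece on $r \ge \tau_n - A_n$ (where $\psi \approx \pi$, so the equation linearizes around the zero solution of~\eqref{2d lin}).

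\textbf{Extracting the radiation.} Let $\chi_n$ be a smooth cutoff with $\chi_n = 1$ on $r \ge \tau_n - A_n$ and $\chi_n = 0$ on $r \le \tfrac12\tau_n$, and set
\[
\vec w_n := \chi_n\bigl(\psi(\tau_n) - \pi,\; \psi_t(\tau_n)\bigr) \in H \times L^2.
\]
On $\supp \chi_n$ the solution is uniformly close to $\pi$, so \eqref{cp} linearizes to~\eqref{2d lin} and the free wave evolution of $\vec w_n$ approximates $\vec\psi(\tau_n) - (\pi, 0)$ on the outgoing cone (finite speed of propagation plus nonlinear smallness). Apply a Bahouri--G\'erard-type linear profile decomposition for~\eqref{2d lin}, as used throughout~\cite{CKLS1}, to $\vec w_n$. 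The support properties of $\vec w_n$ together with the outgoing geometry force every nontrivial profile to be a radiation (no concentrating core as $n\to\infty$), while the Pythagorean energy expansion combined with the cap $\E(\vec\psi) < 3\E(Q)$ leaves room for at most one such profile $\vec\fy_L \in \HH_0$; the remainder of the profile decomposition vanishes in $H \times L^2$ by the nonlinear smallness on the exterior.

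\textbf{Extracting the soliton and concluding.} Set $\vec u_n := \vec\psi(\tau_n) - \vec\fy_L(\tau_n)$, a degree-one configuration whose energy concentrates on $r \le \lambda\tau_n$, whose time derivative is small in $L^2$, and whose energy tends to $\E(Q) + \eta - \E(\vec\fy_L) < 3\E(Q) - \E(\vec\fy_L)$. A further profile decomposition of $\vec u_n$ on the interior region can only produce stationary bubbles, which must be rescaled copies of $\pm Q$ (the only nontrivial finite-energy equivariant harmonic maps). The topological degree constraint together with the energy cap $< 3\E(Q)$ admits exactly one $+Q$-bubble, at some scale $\lambda_n$; the coercivity results of~\cite[Proposition~$4.3$]{Co}, \cite[Lemma~$2.5$]{CKLS1}, and~\cite{BKT} then yield
\[
\bigl\|\vec u_n - (Q(\cdot/\lambda_n), 0)\bigr\|_{H \times L^2} \to 0.
\]
Finally $\lambda_n \ll \tau_n$, since a bubble at scale $\sim \tau_n$ would deposit a fixed amount of energy in the annulus where the energy has been shown to vanish.

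\textbf{Main obstacle.} The hardest step is the construction of the radiation $\vec\fy_L$ and the simultaneous verification that the remainders in both profile decompositions (exterior and interior) vanish in $H \times L^2$. This is precisely where the bound $\E(\vec\psi) < 3\E(Q)$ is essential---it rules out the secondary bubbles that would otherwise be compatible with the degree constraint and that could spoil the soliton-plus-radiation structure---and where the machinery developed in~\cite{CKLS1} for finite-time blow-up is adapted with only the modifications demanded by the global, outgoing-cone geometry.
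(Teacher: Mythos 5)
Your outline follows the same broad arc as the paper (split $\vec\psi(\tau_n)$ into a nearly-static interior degree-one piece, a radiation near the light cone, and show the residual vanishes), but there are two places where the argument as written would not close.

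\textbf{Ruling out secondary profiles.} You claim that the Pythagorean expansion together with the bound $\E(\vec\psi)<3\E(Q)$ "leaves room for at most one such profile" in the exterior, and later that the energy cap "admits exactly one $+Q$-bubble." Energy counting of this kind only excludes profiles carrying energy of order $\E(Q)$; it does nothing to rule out additional concentrating profiles of \emph{small} energy, which are perfectly compatible with both the energy budget and the degree constraint. Disposing of those small profiles is precisely the hardest part of the proof, and it is not an energetic argument. In the paper one forms $\vec a(t)=\vec\psi(t)-\vec\fy(t)$, sets $\vec b_n:=\vec a(\tau_n)-(Q(\cdot/\la_n),0)$, and shows $\vec b_n\to 0$ in $H\times L^2$ (Proposition~\ref{b compact}) by (i) proving that in any profile decomposition of $\vec b_n$ all nonzero profiles have vanishing initial velocity and scales strictly between $\la_n$ and $\tau_n$ (Lemma~\ref{in vel 0}, \eqref{scales}); (ii) the technical averaging estimates of Lemma~\ref{2 lim} and Corollary~\ref{p3}; and finally (iii) a contradiction via the exterior‑energy lower bound for the free wave, Corollary~\ref{ext en est}, which quantifies a channel‑of‑energy property: a nonzero residual would emit a fixed amount of energy outside $\{r\ge t\}$, contradicting the vanishing \eqref{ti b to 0 sn} that follows from the structural decompositions \eqref{ghbz}--\eqref{z to 0}. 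Without an ingredient of this rigidity type, the conclusion that the remainder vanishes does not follow.

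\textbf{Identifying the bubble.} You write that an interior profile decomposition "can only produce stationary bubbles, which must be rescaled copies of $\pm Q$." A Bahouri--G\'erard decomposition produces \emph{linear} profiles $\fy_L^j(-t_n^j/\la_n^j)$; there is nothing in the decomposition itself that forces them to be time-independent or to solve the elliptic equation. In the paper the identification of the $Q$-bubble does not come from a profile decomposition at all, but from the Struwe-type compactness argument of Theorem~\ref{struwe}: one chooses the scale $\la(t)$ by the local-energy normalization \eqref{mu def}, uses Lemma~\ref{sig lem} to kill the time-averaged $\dot\psi^2$, passes to a local weak limit of the rescaled wave maps, and then invokes H\'elein's regularity and Sacks--Uhlenbeck removable singularity to conclude the limit is a smooth equivariant harmonic map, hence $\pm Q(\cdot/\la_0)$; strong local $L^2_t(H\times L^2)$ convergence \eqref{strong loc} and the lower bound \eqref{low bound} then rule out the trivial limit. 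Your proposal compresses this into one sentence that does not supply the mechanism.

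The order in which you extract radiation versus soliton is the reverse of the paper's (the paper first gets the bubble via Theorem~\ref{struwe}/Proposition~\ref{str diag}, then the radiation via Proposition~\ref{rad prop}); that reversal by itself is not a problem, but the two gaps above are, and both would need to be filled for the proof to go through.
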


To prove Proposition~\ref{global} we proceed in several steps. We first construct the sequences $\tau_n$  and $\la_n$ while identifying the large profile, $Q( \cdot/ \la_n)$. Once we have done this, we extract the radiation term $\fy_L$. In the last step, we prove strong convergence of the error $$ \vec \epsilon(\tau_n):= \vec \psi(\tau_n) -   \vec \fy_L(\tau_n) - \left(Q\left(\cdot/\la_n\right), 0\right) \to 0$$ in the space $H \times L^2$. 


 \subsection{The harmonic map at $t=+ \infty$}
 
 Here we prove the analog of Struwe's result \cite[Theorem $2.1$]{St}    for global wave maps of degree different than zero, i.e.,  $\psi(t) \in \HH \backslash \HH_0$ for all $t \in [0, \infty)$.  This will allow us to identify the sequences $\tau_n$, $\la_n$ and the harmonic maps $Q( \cdot/ \la_n)$ in the decomposition \eqref{dec seq}.   
 
\begin{thm} \label{struwe} Let  $\vec \psi(t) \in \HH \backslash \HH_0$ be a smooth, global solution to \eqref{cp}.  Then, there exists a sequence of times $t_n \to \infty$ and a sequence of scales $\la_n \ll t_n$ so that the following results hold: Let 
  \begin{align}
  \vec \psi_n(t, r):=\left( \psi(t_n + \la_n t, \la_n r),  \la_n \dot \psi(t_n + \la_n t, \la_n r)\right)
  \end{align}  
  be the global wave map evolutions associated to the initial data $$\vec \psi_n(r):= (\psi(t_n, \la_n r), \la_n \dot \psi(t_n, \la_n, r)).$$
  Then, there exists $\la_0>0$ so that 
  \begin{align*}
  \vec \psi_n \to (\pm Q(\cdot/ \la_0), 0) \quad \textrm{in} \quad L^2_t([0, 1) ; H^1\times L^2 )_{\loc}.
  \end{align*}
  
\end{thm}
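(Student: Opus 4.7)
The plan is to adapt Struwe's bubbling argument to global, nonzero-degree equivariant wave maps, using the Christodoulou--Tahvildar-Zadeh-type estimates of the previous section in place of the usual virial calculation. First, I would apply Corollary~\ref{ces lem}, combined with Fubini and pigeonholing, to extract a sequence $t_n \to \infty$ along which the space-time $L^2$ mass of $\psi_t$ on a temporal window of width $\lambda_n T$ (with the scale $\lambda_n$ defined momentarily) inside the forward light cone is $o(\lambda_n)$ for every fixed $T, R > 0$. Since $\vec\psi(t) \in \HH_k$ with $k \ne 0$, the continuous profile $\psi(t_n, \cdot)$ ranges from $0$ to $k\pi$ and so must cross $\pi/2$; I set
$$\lambda_n := \inf\{\, r > 0 : \psi(t_n, r) = \pi/2 \,\}.$$
If $\lambda_n \ge c t_n$ along a subsequence for some $c > 0$, then Corollary~\ref{psi to pi la} forces $\psi(t_n, \lambda_n) \to k\pi \ne \pi/2$, a contradiction, hence $\lambda_n \ll t_n$.

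Next, I rescale: set $\vec\psi_n(t, r) := (\psi(t_n + \lambda_n t, \lambda_n r), \, \lambda_n \dot\psi(t_n + \lambda_n t, \lambda_n r))$, which is a global wave map of uniformly bounded energy satisfying $\psi_n(0, 1) = \pi/2$. The change of variables turns the bound above into $\|\partial_t \psi_n\|_{L^2([0, T] \times B(0, R))} \to 0$ as $n \to \infty$ for every $T \in [0, 1)$ and $R > 0$. Passing to a subsequence, $\psi_n \rightharpoonup \psi_\infty$ weakly in $L^2_t H^1_{\loc}$, and the strong $L^2$ vanishing of $\partial_t \psi_n$ forces $\psi_\infty$ to be time-independent; taking distributional limits in \eqref{cp} then shows that $\psi_\infty$ solves the equivariant harmonic map equation \eqref{Q}. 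By Aubin--Lions applied on compact annuli in $r$ (where equivariant $H^1$ functions are H\"older continuous), $\psi_n \to \psi_\infty$ in $L^2_{t,\loc} C^0_{r,\loc}$, so the normalization passes to the limit: $\psi_\infty(1) = \pi/2$. Since the only finite-energy equivariant harmonic maps $\R^2 \to \Sp^2$ are the constant maps and $\pm Q(\cdot/\lambda)$ for $\lambda > 0$, we conclude $\psi_\infty = \pm Q(\cdot/\lambda_0)$ for a unique $\lambda_0 > 0$.

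To upgrade weak convergence to the required strong convergence in $L^2_t([0, 1); H^1 \times L^2)_{\loc}$, the $L^2$ (kinetic) component is the bound already established, while the $H^1$ component follows from weak convergence together with convergence of local energies. The latter uses Proposition~\ref{ctzcor1} to rule out energy leakage through the light cone, and the equation \eqref{cp} (via the vanishing kinetic term) to propagate local energy balance into the limit, after which the standard Hilbert-space identity (weak $+$ norm $\Rightarrow$ strong) applies. The main obstacle I anticipate is the joint extraction in the first step: producing $t_n$ for which $\partial_t \psi_n$ is small in the \emph{space-time} $L^2$ sense on a window of width $\lambda_n T$ after rescaling, not merely pointwise at $t = t_n$, while remaining compatible with the normalization $\psi(t_n, \lambda_n) = \pi/2$. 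A careful double pigeonhole on the Ces\`aro average---first selecting a density-one set of good windows, then good times inside each window satisfying the degree condition---should suffice, but is complicated by the fact that $\lambda_n$ itself is determined by the chosen $t_n$.
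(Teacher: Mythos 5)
Your outline follows the same broad road map as the paper's proof, but the two points you flag as "obstacles" are precisely the places where the paper does nontrivial work, and your treatment of them does not close the gap.

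\medskip

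\emph{The joint extraction.} You correctly identify the circularity: $\lambda_n$ is determined only after $t_n$ is chosen, yet you want the Ces\`aro smallness of $\dot\psi$ on a window whose width is $\lambda_n$. The paper resolves this with Lemma~\ref{sig lem}, whose whole point is the \emph{uniformity in the window width} $\sigma$: for any increasing $A(t)\nearrow\infty$ with $A(t)\le t$, one can find $t_n\to\infty$ such that
\[
\sup_{\sigma>0}\frac{1}{\sigma}\int_{t_n}^{t_n+\sigma}\int_0^{t-A(t_n)}\dot\psi^2\,r\,dr\,dt\longrightarrow 0.
\]
Because the supremum is over \emph{all} $\sigma>0$, one can afterwards substitute $\sigma=\lambda_n$ for whatever $\lambda_n$ turns out to be. To feed this lemma, the paper first defines $\lambda(\cdot)$ as a \emph{function} of $t$ (not just along a sequence), then defines the monotone envelope $A(t):=\max\{\tilde\lambda(t),t^{1/2}\}$, and only then extracts $t_n$; thus the dependence is not circular. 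Your ``double pigeonhole'' is a sketch of a proof that you have not carried out; the paper's proof of Lemma~\ref{sig lem} is a genuine supremum/contradiction argument (following \cite[Corollary 5.3]{DKM1}) and is not a routine Fubini pigeonhole. This is the main missing ingredient in your write-up.

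\medskip

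\emph{The choice of $\lambda_n$.} Your definition $\lambda_n:=\inf\{r:\psi(t_n,r)=\pi/2\}$ gives a \emph{lower} bound on the local energy $\E_0^{\lambda_n}$ (via \eqref{b R}), which is what you use to rule out a trivial limit. But it provides no \emph{upper} bound on $\E_0^{2\lambda_n}$, and that upper bound is exactly what Struwe's argument for strong local convergence requires (the paper invokes \eqref{up bound}, $\E_0^1(\vec\psi_n(t))\le 2\delta_0$ uniformly in $n$ for $|t|\le 1$, to run the argument of \cite[Theorem 2.1(ii)]{St}). The paper sidesteps this by defining $\lambda(t)$ through the two-sided energy pinching \eqref{mu def}, which then yields \emph{both} \eqref{up bound} and \eqref{low bound} by monotonicity of the energy on cones. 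With your $\pi/2$-crossing scale, the energy on $[0,2\lambda_n]$ could a priori be as large as the full energy of $\vec\psi$, and the small-local-energy compactness step breaks down. You would need an additional argument (or to switch to the energy-pinching definition) to repair this.

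\medskip

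Minor remark: your appeal to Aubin--Lions to pass the normalization $\psi_\infty(1)=\pi/2$ to the limit is reasonable, but the paper instead excludes the trivial limit directly from the \emph{uniform lower} energy bound \eqref{low bound} (which in turn comes from the pinching definition of $\lambda$), so once you adopt that definition you get the nontriviality for free and no separate pointwise-normalization argument is needed.
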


We begin with the following lemma, which  follows from Corollary~\ref{ces lem} and is the global-in-time version of \cite[Corollary $2.9$]{CKLS1}. The statement and proof are also very similar to \cite[Lemma $4.4$]{DKM3} and \cite[Corollary $5.3$]{DKM1}. 

\begin{lem} \label{sig lem} Let $\vec \psi(t) \in \HH$ be a smooth global wave map. Let $A:(0, \infty) \to (0, \infty)$ be any increasing function such that $A(t) \nearrow \infty$ as $t  \to \infty$ and $A(t)  \le t$ for all  $t$. Then, there exists a sequence of times $t_n \to \infty$ such that 
\begin{align}\label{sig}
\lim_{n \to \infty} \sup_{\s>0} \frac{1}{\s} \int_{t_n}^{t_n+\s} \int_0^{t- A(t_n)} \dot \psi^2(t, r) \, r \, dr \, dt = 0.
\end{align}
\end{lem}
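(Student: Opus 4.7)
The plan is to derive Lemma~\ref{sig lem} from Corollary~\ref{ces lem} via a maximal-function / Vitali covering argument, in close parallel with the finite-time blow-up version in \cite[Corollary~$2.9$]{CKLS1}. Introduce the auxiliary quantity
\[
g_A(t) := \int_0^{t-A} \dot{\psi}^2(t,r)\, r\, dr \qquad (t \ge A),
\]
so that Corollary~\ref{ces lem} reads $\limsup_{T \to \infty} T^{-1} \int_A^T g_A(t)\, dt \to 0$ as $A \to \infty$. The key structural observation is the pointwise monotonicity $g_{A'}(t) \le g_A(t)$ whenever $A' \ge A$: enlarging the cutoff strictly shrinks the range $[0, t-A]$ over which a nonnegative density is integrated.

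The main step is a covering estimate. Fix $\eps > 0$ small and use Corollary~\ref{ces lem} to choose $A_0 = A_0(\eps)$ so that $\limsup_{T \to \infty} T^{-1} \int_A^T g_A(t)\, dt \le \eps^2$ for every $A \ge A_0$. Consider the exceptional set
\[
E_A := \Bigl\{\, t \ge A : \sup_{\sigma > 0} \tfrac{1}{\sigma} \int_t^{t+\sigma} g_A(s)\, ds > \eps\, \Bigr\}.
\]
For each $t \in E_A \cap [A,T]$ I select $\sigma(t)$ with $\int_t^{t+\sigma(t)} g_A > \eps\,\sigma(t)$; the Cesàro bound forces $\sigma(t) \le C T$ once $\eps$ is small and $T$ is large, because otherwise $\sigma(t)^{-1}\int_t^{t+\sigma(t)} g_A$ would itself be dominated by the global Cesàro mean, which is at most $2\eps^2 < \eps$. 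The intervals $\{[t, t+\sigma(t)]\}$ therefore lie in $[A, (C+1)T]$, and Vitali extracts a disjoint subfamily $\{[t_k, t_k+\sigma_k]\}$ with $\sum_k \sigma_k \gtrsim |E_A \cap [A,T]|$. Summing,
\[
\eps \cdot |E_A \cap [A,T]| \;\lesssim\; \sum_k \int_{t_k}^{t_k+\sigma_k} g_A \;\le\; \int_A^{(C+1)T} g_A \;\lesssim\; \eps^2\, T,
\]
i.e.\ $|E_A \cap [A,T]| \lesssim \eps\, T$ for large $T$. In particular the complement of $E_A$ has density at least $1 - O(\eps)$ in $[A,\infty)$ and contains arbitrarily large times.

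To conclude, pick $\eps_n \downarrow 0$, set $A_n := A_0(\eps_n)$, and choose $T_n^*$ with $A(T_n^*) \ge A_n$ (possible since $A(t) \nearrow \infty$). By the previous step I can take $t_n \ge \max(T_n^*, n)$ outside $E_{A_n}$. Since $A(t_n) \ge A_n$, the monotonicity of $g_A$ in $A$ gives $g_{A(t_n)} \le g_{A_n}$ pointwise, and hence
\[
\sup_{\sigma > 0} \frac{1}{\sigma} \int_{t_n}^{t_n + \sigma} \int_0^{t - A(t_n)} \dot\psi^2(t,r)\, r\, dr\, dt \;\le\; \sup_{\sigma > 0} \frac{1}{\sigma} \int_{t_n}^{t_n + \sigma} g_{A_n}(t)\, dt \;\le\; \eps_n \;\longrightarrow\; 0,
\]
which is precisely \eqref{sig}. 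The main obstacle is the Vitali step converting Cesàro-mean smallness into maximal-window smallness at a set of times of asymptotically full density; once that is in hand, the monotonicity of $g_A$ in $A$ cleanly threads the $n$-dependent cutoff $A(t_n)$ through the estimate.
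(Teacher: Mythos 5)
Your argument is correct, but it takes a genuinely different route from the one in the paper. The paper proves Lemma~\ref{sig lem} by contradiction, following \cite[Corollary $5.3$]{DKM1}: assuming \eqref{sig} fails for every sequence, one fixes a large $T$, considers the set $X$ of window lengths $\s$ whose averaged integral from $T$ exceeds $\de$, uses the Ces\`aro bound of Corollary~\ref{ces lem} to show $\rho := \sup X \le T$ is finite, and then invokes the failing hypothesis once more at time $T+\rho$ to produce a longer window in $X$ --- a contradiction. Your proof is instead a direct density argument in the spirit of the Hardy--Littlewood maximal function: you fix $A$, define the exceptional set $E_A$ of times where the maximal windowed average of $g_A$ exceeds $\e$, observe that the Ces\`aro bound forces the selected windows $\s(t)$ to be at most comparable to $t$ (so Vitali applies on a bounded region), and then the Vitali covering lemma converts the Ces\`aro-mean bound into $\abs{E_A \cap [A,T]} \lesssim \e T$ for $T$ large. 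This shows the complement of $E_A$ has asymptotically full density, hence contains arbitrarily large times, and the pointwise monotonicity $g_{A'} \le g_A$ for $A' \ge A$ lets you thread the moving cutoff $A(t_n)$ through. The paper's approach is shorter and avoids measure-theoretic machinery; yours is more quantitative, yielding that the set of ``good'' times has density tending to $1$, which is a stronger conclusion than the bare existence of a sequence. Two minor points worth tightening in a final write-up: the uniform bound $\s(t) \lesssim T$ only holds once $t$ exceeds the threshold $T_0(\e,A)$ from the $\limsup$, so the Vitali step should be run on $E_A \cap [T_0, T]$ with the bounded remainder $[A,T_0]$ absorbed separately; and you should note that $t_n \ge T_n^* \ge A(T_n^*) \ge A_n$ so $g_{A_n}(t)$ is well-defined on the window $[t_n, t_n+\s]$.
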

 
 \begin{proof} The proof is analogous to the argument given in \cite[Corollary $5.3$]{DKM1}. We argue by contradiction. The existence of a sequence of times $t_n$ satisfying \eqref{sig} is equivalent to the statement
\begin{align*} 
\forall A(t) \nearrow \infty \, \, \textrm{with} \, \,& A(t) \le t \, \,  \textrm{as} \, \, t \to \infty, \, \, \forall \de>0, \, \, \forall T_0 >0, \, \, \exists \tau  \ge T_0 \, \, \textrm{so that} \, \, \\
&\sup_{\s>0} \frac{1}{ \s} \int_{\tau}^{\tau+\s} \int_0^{t- A(\tau)} \dot \psi^2(t, r) \, r \, dr \, dt \le \de.
\end{align*} 
 So we assume that \eqref{sig} fails. Then, 
 \begin{align}\notag
 \exists A(t) \nearrow \infty \, \, \textrm{with} \, \,& A(t) \le t \, \,  \textrm{as} \, \, t \to \infty, \, \, \exists \de>0, \, \, \exists T_0 >0, \, \, \,   \forall \tau  \ge T_0,  \, \, \exists \s>0 \,\, \textrm{so that} \\
& \frac{1}{ \s} \int_{\tau}^{\tau+\s} \int_0^{t- A(\tau)} \dot \psi^2(t, r) \, r \, dr \, dt > \de \label{sig big}.
\end{align} 
 Now, by Corollary~\ref{ces lem} we can find a large $A_1$ and a $T_1=T_1(A_1)>T_0$ so that for all $T \ge T_1$ we have
 \begin{align} \label{ces small1}
 \frac{1}{T} \int_{A_1}^T \int_0^{t-A_1} \dot \psi^2(t, r) \, r\, dr \, dt  \le \de/100.
\end{align}
 Since $A(t) \nearrow \infty$  we can fix $T>T_1$ large enough so that $A(t) \ge A_1$ for all $t \ge T$. Define the set $X$ as follows: 
 \begin{align*}
 X:=\left\{ \s>0 \, : \, \frac{1}{\s} \int_{T}^{T+\s} \int_0^{t- A(T)} \dot \psi^2(t, r) \, r \, dr \, dt \ge\de\right\}.
 \end{align*}
 Then $X$ is nonempty by \eqref{sig big}. Define $\rho:= \sup X$. We claim that $\rho  \le T$. To see this assume that there exists $\s \in X$ so that $\s \ge T$. Then we would have 
 \begin{align*}
  T+ \s  \le 2 \s.
 \end{align*}
 This in turn implies, using \eqref{ces small1}, that
 \begin{align*}
 \frac{1}{2 \s} \int_T^{T+ \s} \int_0^{t-A(T)} \dot \psi^2(t, r) \, r \, dr \, dt  \le \frac{1}{ T+ \s} \int_{A_1}^{T+ \s} \int_{0}^{t- A_1}\dot \psi^2(t, r) \, r \, dr \, dt  \le \de/ 100
 \end{align*}
 where we have also used the  fact that $A(T) \ge A_1$. This would mean that 
 \begin{align*}
  \frac{1}{ \s} \int_T^{T+ \s} \int_0^{t-A(T)} \dot \psi^2(t, r) \, r \, dr \, dt   \le \de/50,
  \end{align*} 
  which is impossible since we assumed that $\s \in X$. Therefore $\rho \le T$. Moreover, we know that 
  \begin{align}\label{1int}
  \int_T^{T+ \rho} \int_0^{T- A(T)}  \dot \psi^2(t, r) \, r \, dr \, dt \ge \de \rho.
  \end{align}
  Now, since $T+ \rho >T>T_1>T_0$ we know that there exists $\s>0$ so that 
  \begin{align*}
  \int_{T+ \rho}^{T+ \rho + \s} \int_0^{t-A(T+ \rho)}  \dot \psi^2(t, r) \, r \, dr \, dt > \de \s.
  \end{align*}
  Since $A(t)$ is increasing, we have $A(T) \le A(T+ \rho)$ and hence the above implies that 
  \begin{align} \label{2int}
    \int_{T+ \rho}^{T+ \rho + \s} \int_0^{t-A(T)} \dot \psi^2(t, r) \, r \, dr \, dt > \de \s.
    \end{align}
    Summing \eqref{1int} and \eqref{2int} we get
    \begin{align*}
    \int_{T}^{T+ \rho + \s} \int_{0}^{t- A(T)} \dot \psi^2(t, r) \, r \, dr \, dt > \de (\s+ \rho),
    \end{align*}
    which means that $\rho + \s \in X$. But this contradicts that fact that $\rho= \sup X$. 
\end{proof}

 The rest of the proof of Theorem~\ref{struwe} will follow the same general outline of  \cite[proof of Theorem $2.1$]{St}. Let $\vec \psi(t) \in \HH_1$ be a smooth global wave map. 
 
 We begin by choosing a scaling parameter. Let $\de_0>0$ be a small number, for example $\de_0=1$ would work. For each $t \in (0, \infty)$ choose $\la(t)$ so that 
 \begin{align}\label{mu def}
  \de_0 \le \E_0^{2 \la(t)}( \vec \psi(t)) \le2 \de_0.
  \end{align}
  Then using the monotonicity of the energy on interior cones we know that for each $ \abs{\tau} \le  \la(t)$ we have 
  \begin{align}\label{up bound}
  \E_0^{\la(t)} ( \vec \psi(t+ \tau)) \le \E_0^{2\la(t)- \abs{\tau}}( \vec \psi(t+ \tau)) \le \E_0^{2 \la(t)}( \vec \psi(t)) \le 2 \de_0.
  \end{align}
  Similarly, we have 
 \begin{align}\label{low bound}
 \de_0 \le \E_0^{2 \la(t) + \abs{\tau}}( \vec \psi(t+ \tau)) \le \E_0^{3\la(t)}( \vec \psi(t+ \tau)).
 \end{align}
\begin{lem} 
Let $\vec \psi(t) \in\HH \backslash \HH_0$ and $\la(t)$ be defined as above. Then we have $\la(t) \ll t$ as $t \to \infty$. 
\end{lem}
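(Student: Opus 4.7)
The plan is to argue by contradiction. Suppose $\la(t)/t \not\to 0$; then there exist $c>0$ and a sequence $t_n \to \infty$ with $\la(t_n) \ge c t_n$ for every $n$. I will derive a contradiction by showing that the energy of $\vec\psi(t_n)$ inside the ball $\{r \le 2\la(t_n)\}$ must eventually exceed the upper bound $2\de_0$ imposed by~\eqref{mu def}.

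The key input is the nonzero topological degree of $\vec\psi$. Since $\vec\psi(t)\in\HH\setminus\HH_0$, there is an integer $k\neq 0$ with $\psi(t,0)=0$ and $\psi(t,\infty)=k\pi$ for every $t$. By Corollary~\ref{psi to pi la}, for any fixed $\mu > 0$,
\begin{align*}
\psi(t,\mu t) \to k\pi \quad \text{as } t \to \infty,
\end{align*}
so by continuity of $G$ defined in \eqref{G def}, $G(\psi(t,\mu t)) \to G(k\pi) = 2|k|$. Applying \eqref{b R} on the interval $[0,\mu t]$ with $G(\psi(t,0)) = 0$ then gives
\begin{align*}
\E_0^{\mu t}(\vec\psi(t)) \ge \E_0^{\mu t}(\psi(t),0) \ge 2\bigl|G(\psi(t,\mu t))\bigr| \longrightarrow 4|k| \ge \E(Q)
\end{align*}
as $t \to \infty$.

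To close the argument, I would choose $\mu = c$. Then $2\la(t_n) \ge c t_n = \mu t_n$, and the monotonicity of $r \mapsto \E_0^r(\vec\psi(t_n))$ yields
\begin{align*}
\E_0^{2\la(t_n)}(\vec\psi(t_n)) \ge \E_0^{c t_n}(\vec\psi(t_n)) \ge \E(Q) - o(1) = 4 - o(1),
\end{align*}
which eventually exceeds $2\de_0 = 2$, contradicting \eqref{mu def}.

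I do not foresee any serious technical obstacle. The proof is essentially a quantitative reading of the fact that a nonzero-degree equivariant map carries at least $\E(Q)$ worth of energy in the bulk, combined with the asymptotic pinning of $\psi$ toward $k\pi$ on exterior cones provided by Corollary~\ref{psi to pi la}. The specific choice $\de_0 = 1$ from \eqref{mu def} is exactly what guarantees the strict inequality $2\de_0 < \E(Q)$ needed to drive the contradiction.
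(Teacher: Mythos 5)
Your proof is correct and follows essentially the same route as the paper's: argue by contradiction, use Corollary~\ref{psi to pi la} to pin $\psi(t,\cdot)$ near $k\pi$ outside the cone $r\ge \mu t$, apply the pointwise bound~\eqref{b R} with $G(\psi(t,0))=0$ to force at least $\E(Q)$ worth of energy into $\{r\le 2\la(t_n)\}$, and contradict the upper bound $2\de_0<\E(Q)$ in~\eqref{mu def}. The only cosmetic slip is writing $G(k\pi)=2|k|$, which holds only for $k>0$ (for $k<0$ one has $G(k\pi)=-2|k|$), but since you take $|G(\psi(t,\mu t))|$ in the energy estimate this does not affect the argument; the paper sidesteps this by assuming $k\ge 1$ without loss of generality.
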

\begin{proof} 
Suppose $\vec \psi \in \HH_k$ for $k \ge 1$. It suffices to show that for all $\la>0$ we have $\la(t) \le \la t $ for all $t$ large enough. Fix $\la>0$. By Corollary~\ref{psi to pi la} we have 
\begin{align}\label{psi pi l}
\| \psi(t)-  k \pi\|_{L^{\infty}(r \ge \la t)} \to 0 
\end{align}
as $t \to \infty$. For the sake of finding a contradiction, suppose that there exists a sequence $t_n \to \infty$  with 
$\la(t_n) \ge \la t_n$  for all $n \in \N$. By \eqref{b R} and \eqref{psi pi l} we would then have that 
\begin{align*}
\E_0^{2 \la(t_n)}( \vec \psi(t_n)) \ge \E_0^{\la t_n}( \vec \psi(t_n)) \ge 2 G( \psi(t_n, \la t_n)) \to 2 G(k \pi) \ge 4 > 2\de_0,
\end{align*}
which contradicts \eqref{mu def} as long as we ensure that $\de_0<2$. 
\end{proof}

We can now complete the proof of Theorem~\ref{struwe}. 
\begin{proof}[Proof of Theorem~\ref{struwe}]
Let  $\la(t)$ be defined as in \eqref{mu def}.  Choose another scaling parameter $A(t)$ so that $A(t) \to \infty$ and $\la(t) \le A(t) \ll t$ for  $t \to \infty $, for example one could take $A(t):=\max\{ \ti \la(t), t^{1/2}\}$ where $\ti \la(t) := \sup_{0  \le s  \le t }\la(s)$.  
By Lemma~\ref{sig lem} we can find a sequence $t_n \to \infty$ so that by setting $\la_n:=\la(t_n)$ and $A_n:=A(t_n)$ we have
 \begin{align*}
 \lim_{n \to \infty} \frac{1}{\la_n}  \int_{t_n}^{t_n + \la_n} \int_0^{t-A_n} \dot \psi^2(t, r) \, r\, dr \, dt  = 0.
\end{align*}
Now define a sequence of global wave maps $\vec \psi_n(t) \in \HH \backslash \HH_0$ by
\begin{align*}
\vec \psi_n(t, r):= \left( \psi(t_n + \la_n t, \la_n r),  \la_n \dot \psi(t_n + \la_n t, \la_n r)\right).
  \end{align*}  
and write the full wave maps in coordinates on $\Sp^2$ as $U_n(t, r, \om):= ( \psi_n(t, r), \om)$. Observe that we have 
\begin{align}\label{dt to 0}
\int_{0}^1 \int_0^{r_n} \dot\psi_n^2(t, r) \, r\, dr \, dt \to 0 \quad \textrm{as} \quad n \to \infty
\end{align}
where $r_n:= (t_n-A_n)/ \la_n  \to \infty$ as $n \to \infty$ by our choice of $A_n$. 
Also note that
\begin{align*}
\E( \vec \psi_n(t))= \E( \vec \psi(t_n+\la_n t))= \E( \vec\psi)= C.
\end{align*}
This implies that the sequence $\vec \psi_n$ is uniformly bounded in $L^{\infty}_t(\dot{H}^1 \times L^2)$. Note that \eqref{b R} implies that $\psi_n$ is uniformly bounded in $L^{\infty}_tL^{\infty}_x$. Hence we can extract a further subsequence so that 
\begin{align*}
&\vec \psi_n  \rightharpoonup \vec{\psi}_{\infty} \quad \textrm{weakly in} \quad L^2_t (H^1 \times L^2)_{\loc}
\end{align*}
and, in fact, locally uniformly on $[0, 1) \times (0, \infty)$. 
By \eqref{dt to 0}, the limit $$ \vec\psi_{\infty}(t, r) = (  \psi_{\infty}(r), 0) \quad \forall (t, r)  \in [0, 1) \times (0, \infty)$$  and is thus a time-independent weak solution to \eqref{cp} on $[0, 1) \times (0, \infty)$. This means that the corresponding full, weak wave map $\ti U_{\infty}(t, r, \om)=  U_{\infty}(r, \om):=( \psi_{\infty}(r), \om)$ is a time-independent  weak solution to \eqref{cp i} on $[0, 1) \times \R^2 \,\backslash\,\{0\}$. By H\'{e}lein's theorem \cite[Theorem $2$]{Hel}, $$U_{\infty}: \R^2\, \backslash\, \{0\} \to \Sp^2$$ 
is a smooth finite energy, co-rotational   harmonic map. By Sacks-Uhlenbeck, \cite{SU}, we can then extend $ U_{\infty}$ to a smooth finite energy, co-rotational harmonic map $U: \R^2 \to \Sp^2$. Writing $U(r, \om) = ( \psi_{\I}(r), \om)$, we have either $\psi_{\infty} \equiv 0$ or $\psi_{\infty}=  \pm Q(\cdot/ \la_0)$ for some $\la_0>0$.

Following Struwe, we can also establish strong local convergence 
\begin{align}\label{strong loc}
\vec \psi_n \to (\psi_{\infty}, 0) \quad \textrm{in} \quad L^2_t([0, 1) ; H^1\times L^2 )_{\loc}
\end{align}
using the equation \eqref{cp i} and the local energy constraints from \eqref{up bound}:
\begin{align*}
\E_0^{1}( \vec \psi_n(t)) \le 2 \de_0, \quad \E_0^1( \psi_{\infty}) \le 2 \de_0,
\end{align*}
which hold uniformly in $n$ for $\abs{t} \le 1$. For the details of  this argument we refer the reader to \cite[Proof of Theorem $2.1$ (ii)]{St}. Finally we note that the strong local convergence in \eqref{strong loc} shows that indeed $\psi_{\infty} \not \equiv 0$ since by \eqref{low bound} we have 
\begin{align*}
\de_0 \le \E_0^{3}( \vec \psi_n(t))
\end{align*}
uniformly in $n$ for each $ \abs{t} \le 1$. Therefore we can conclude that there exists $\la_0>0$ so that $\psi_{\infty}(r)= \pm Q(r/ \la_0)$. 
\end{proof}

As in \cite{CKLS1}, the following  consequences of Theorem~\ref{struwe}, which hold for global degree one wave maps with energy below $3\E(Q)$, will be essential in what follows. 

\begin{cor}  \label{str 3eq}Let $\psi(t) \in \HH_1$ be a smooth global wave map  such that $\E(\vec \psi)<3 \E(Q)$. 
Then we have
\begin{align}\label{Hloc}
\psi_n - Q( \cdot /\la_0)  \to 0\quad{as} \quad n \to \infty \quad \textrm{in} \quad L^2_t([0, 1); H)_{\loc},
\end{align}
with $\psi_n(t, r)$, $\{t_n\}$,  $\{\la_n\}$, and $\la_0$  as in Theorem~\ref{struwe}. 
\end{cor}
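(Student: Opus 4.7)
My plan is to upgrade Theorem~\ref{struwe} by using the energy bound $\E(\vec\psi) < 3\E(Q)$ together with the topological condition $\psi(t, \infty) = \pi$ (inherited by each rescaled map $\psi_n$) to eliminate the $\pm$ sign ambiguity, and then to strengthen the convergence from $H^1$ to $H$ on compact spatial sets. First I would invoke Theorem~\ref{struwe} to produce sequences $t_n \to \infty$, $\la_n \ll t_n$, and some $\la_0 > 0$ such that $\vec\psi_n \to (\pm Q(\cdot/\la_0), 0)$ in $L^2_t([0,1); H^1 \times L^2)_{\loc}$. The main task will be ruling out the minus sign; the $H^1$-to-$H$ passage is then routine, since on any compact $[r_1, r_2] \subset (0, \infty)$ the extra term $\psi^2/r^2$ appearing in the $H$ norm is controlled by $r_1^{-2}$ times the local $L^2(r\,dr)$ norm of $\psi$, and one-dimensional Sobolev embedding upgrades the $H^1_{\loc}$ convergence accordingly.

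To rule out the minus sign, I argue by contradiction: assume $\psi_n \to -Q(\cdot/\la_0)$. By Fubini and one-dimensional Sobolev embedding, strong $L^2_t H^1_{\loc}$ convergence yields, for a.e.~$t \in [0,1)$, uniform convergence of $\psi_n(t, \cdot)$ to $-Q(\cdot/\la_0)$ on compact subsets of $(0, \infty)$. Fix such a $t$ and a large $R > 0$; then for $n$ large
\begin{equation*}
\psi_n(t, R) = -Q(R/\la_0) + o_n(1),
\end{equation*}
whereas, since $\vec \psi(t_n + \la_n t) \in \HH_1$, one has $\psi_n(t, r) \to \pi$ as $r \to \infty$ for each fixed $n$. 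Applying the Cauchy--Schwarz bound \eqref{b R} to $\vec\psi_n(t)$ on $[R, \infty)$ gives
\begin{equation*}
\E_R^\infty(\vec\psi_n(t)) \ge 2\abs{G(\pi) - G(\psi_n(t, R))} \longrightarrow 2\abs{G(\pi) - G(-\pi)} = 8 = 2\E(Q)
\end{equation*}
as $n \to \infty$ and then $R \to \infty$.

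Simultaneously, by the strong local convergence, for any compact $[r_1, r_2] \subset (0, R)$ capturing most of the energy of $Q(\cdot/\la_0)$, one has $\E_{r_1}^{r_2}(\vec\psi_n(t)) \to \E_{r_1}^{r_2}((Q(\cdot/\la_0), 0))$, which can be made arbitrarily close to $\E(Q)$ by shrinking $r_1$ and enlarging $r_2$ inside $(0, R)$. Since $[r_1, r_2]$ and $[R, \infty)$ are disjoint and $\E(\vec\psi_n(t)) = \E(\vec\psi)$, summing the two contributions yields $\E(\vec\psi) \ge 3\E(Q) - o(1)$, contradicting the hypothesis. Hence the sign is $+$, and combined with the $H^1$-to-$H$ upgrade this delivers~\eqref{Hloc}. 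The hard part will be arranging the quantifiers carefully so that the transition from $\sim -\pi$ near radius~$R$ back up to $+\pi$ at spatial infinity is shown to cost at least $2\E(Q) - o(1)$ uniformly; the choice of $r_1, r_2$ first (depending on a tolerance $\e$) and $R$ afterwards, followed by $n \to \infty$, is what makes the spatial energy split rigorous, and once that split is in place the remainder is bookkeeping.
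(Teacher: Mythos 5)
Your proof is correct and takes essentially the same approach as the companion paper CKLS1 (to which the paper defers the details): the $\pm$ sign is ruled out by an energy-split contradiction using the boundary condition $\psi_n(t,\infty) = \pi$ and the bound \eqref{b R}, giving $\E(\vec\psi) \ge \E(Q) + 2\E(Q) - o(1)$ if the limit were $-Q(\cdot/\la_0)$; the $H^1$-to-$H$ upgrade on compacts away from $r=0$ is exactly as you say. The only point worth tidying is that strong $L^2_t$ convergence gives a.e.-$t$ convergence only after passing to a subsequence (not by Fubini per se), and the compact $[r_1,r_2]$, the radius $R$, and then the subsequence should be fixed in that order, as you indicate at the end.
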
 
Corollary~\ref{str 3eq} is the global-in-time analog of \cite[Corollary $2.13$]{CKLS1}. For the details, we refer the reader to \cite[Proof of Lemma $2.11$, Lemma $2.12$,  and  Corollary $2.13$]{CKLS1}. At this point we note that we can, after a suitable rescaling, assume, without loss of generality, that $\la_0$ in Theorem~\ref{struwe}, and Corollary~\ref{str 3eq}, satisfies $\la_0 =1$.

Arguing as in \cite[Proof of Proposition $5.4$]{CKLS1} we can also deduce the following consequence of Theorem~\ref{struwe}.

\begin{prop} \label{str diag} Let $\psi(t) \in \HH_1$ be a smooth global wave map such that $\E(\vec \psi)<3 \E(Q)$. 
Then, there exists a sequence $\al_n \to \infty$, a sequence of times $\tau_n \to \infty$, and  a sequence of scales $\la_n \ll \tau_n$ with $\al_n \la_n \ll \tau_n$, so that 
\begin{itemize} 
\item[($a$)] As $n \to \infty$ we have 
\begin{align} \label{psi dot}
\lim_{n \to \infty}\int_0^{\tau_n - A_n} \dot{\psi}^2(\tau_n, r) \, r\, dr  \to 0,
\end{align}
where $A_n \to \infty$  satisfies $\la_n \le A_n \ll \tau_n$.  
\item[($b$)] As $n \to \infty$  we have
\begin{align} \label{psi-Q}
\lim_{n  \to \infty}\int_0^{\al_n \la_n} \left(\abs{\psi_r(\tau_n, r) - \frac{Q_r(r/ \la_n)}{\la_n}}^2 + \frac{\abs{\psi(\tau_n, r)- Q(r/ \la_n)}^2}{r^2}\right) \, r\, dr=0.
\end{align}
\end{itemize}
\end{prop}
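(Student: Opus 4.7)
The plan is to combine Theorem~\ref{struwe}, Corollary~\ref{str 3eq} and Lemma~\ref{sig lem} via a diagonal extraction driven by the prescribed sequence $\al_n$. After normalizing $\la_0=1$, Theorem~\ref{struwe} and Corollary~\ref{str 3eq} supply a sequence $t_m \to \infty$ and scales $\mu_m \ll t_m$ such that the rescaled maps
$$
\vec \psi_m(s, r) := \bigl(\psi(t_m + \mu_m s, \mu_m r),\, \mu_m \dot\psi(t_m + \mu_m s, \mu_m r)\bigr)
$$
satisfy $\int_0^1 \|\psi_m(s) - Q\|_{H(r \le R)}^2 \, ds \to 0$ as $m \to \infty$ for each \emph{fixed} $R > 0$. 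Moreover, the proof of Theorem~\ref{struwe} (applying Lemma~\ref{sig lem} with $A(t) := \max(\ti \la(t), t^{1/2})$) also furnishes $B_m \to \infty$ with $\mu_m \le B_m \ll t_m$ and
$$
\frac{1}{\mu_m} \int_{t_m}^{t_m + \mu_m} \int_0^{t - B_m} \dot\psi^2(t, r) \, r \, dr \, dt \to 0.
$$

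With these two averaged controls I would thin $\{m\}$ to an increasing subsequence $\{m_n\}$ chosen so that simultaneously: (i)~$\mu_{m_n}/t_{m_n} \le 1/(n\al_n)$, which forces $\al_n \mu_{m_n} \ll t_{m_n}$; (ii)~$\int_0^1 \|\psi_{m_n}(s) - Q\|_{H(r \le \al_n)}^2 \, ds \le 1/n$, which is possible because at stage $n$ the radius $\al_n$ is a fixed number and the convergence for that fixed radius is genuine; and (iii)~the Ces\`{a}ro integral of $\dot\psi^2$ displayed above is $\le 1/n$ at $m=m_n$. Writing $\ti t_n := t_{m_n}$, $\ti \la_n := \mu_{m_n}$, $\ti A_n := B_{m_n}$, the pointwise-in-time profile
$$
H_n(s) := \|\psi_{m_n}(s) - Q\|_{H(r \le \al_n)}^2 + \int_0^{\ti t_n + \ti \la_n s - \ti A_n} \dot\psi^2(\ti t_n + \ti \la_n s, \rho)\, \rho\, d\rho
$$
then satisfies $\int_0^1 H_n(s)\,ds \to 0$, and Markov's inequality (or the intermediate value theorem) yields $s_n \in [0,1]$ with $H_n(s_n) \to 0$.

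Finally, take $\tau_n := \ti t_n + \ti \la_n s_n$, $\la_n := \ti \la_n$, $A_n := \ti A_n$. Since $\tau_n \in [\ti t_n, \ti t_n + \ti \la_n]$ and $\ti \la_n \ll \ti t_n$, we have $\tau_n \sim \ti t_n \to \infty$ and hence $\la_n, A_n \ll \tau_n$; condition (i) gives $\al_n \la_n \ll \tau_n$. Part~($a$) is exactly the second summand of $H_n(s_n) \to 0$. For part~($b$), the first summand of $H_n(s_n)$ is the $H$-norm squared of $\psi_{m_n}(s_n) - Q$ on $\{r \le \al_n\}$ in rescaled coordinates; the change of variables $r = \la_n \ti r$ transforms it exactly into the integral on $\{r \le \al_n \la_n\}$ appearing in the statement of ($b$).

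The main obstacle is that Corollary~\ref{str 3eq} only gives $L^2_s H$ convergence of the rescaled maps on \emph{bounded} balls, whereas ($b$) demands control up to the growing radius $\al_n$. Overcoming this is exactly what the diagonal step is designed to do: $\al_n$ is prescribed but is only a fixed number at each stage $n$, so the freedom to choose $m_n$ arbitrarily large at that stage recovers the required localized convergence on $\{r \le \al_n\}$, and simultaneously allows us to enforce the scale separation $\al_n \la_n \ll \tau_n$.
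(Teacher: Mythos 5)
Your proposal is correct and follows essentially the same route the paper takes: the paper's proof (delegated to \cite[Proposition~5.4]{CKLS1}) is exactly the diagonalization you describe, combining the Ces\`aro average from Lemma~\ref{sig lem} with the local-in-space $L^2_t H$ convergence from Corollary~\ref{str 3eq}, then choosing a single time slice $\tau_n \in [t_n, t_n+\la_n]$ by averaging, with $A_n = A(t_n)$ as in the proof of Theorem~\ref{struwe}. Your explicit construction of $H_n(s)$, the thinning of the index to handle the growing radius $\al_n$ and to enforce $\al_n\la_n \ll \tau_n$, and the rescaling identifying the $H$-norm on $\{r\le\al_n\}$ with the integral in~($b$) on $\{r\le\al_n\la_n\}$ are all in accordance with the paper's intended argument.
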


\begin{rem} \label{rem str}
Proposition~\ref{str diag} follows directly from Lemma~\ref{sig lem}, Corollary~\ref{str 3eq} and a diagonalization argument. As mentioned above, we refer the reader to \cite[Proposition $5.4$ $(a)$, $(b)$]{CKLS1} for the details.  Also note that $\tau_n \in [t_n, t_n + \la_n]$ where $t_n \to \infty$ is the sequence in Proposition~\ref{str diag}. Finally $A_n:=A(t_n)$ is the sequence that appears in the proof of Theorem~\ref{struwe}. 
\end{rem}

As in \cite{CKLS1} we will also need the following simple consequence of Proposition~\ref{str diag}. 
\begin{cor}\label{tech} Let $\al_n, \la_n, $ and $ \tau_n$ be defined as in Proposition~\ref{str diag}.  Let $\be_n \to \infty$ be any sequence such that $\be_n< c_0 \al_n$ for some $c_0<1$. Then, for every $0<c_1<C_2$ such that $C_2c_0<1$ there exists $\ti \be_n$ with $c_1  \be_n \le\ti \be_n \le C_2 \be_n$ such that 
\begin{align}
\psi( \tau_n, \ti \be_n \la_n) \to \pi \quad \textrm{as} \quad n \to \infty.
\end{align}
\end{cor}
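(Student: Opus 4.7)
The plan is to extract the desired radius by combining the $L^2(r\,dr)$ control of $\psi(\tau_n,\cdot)-Q(\cdot/\la_n)$ provided by \eqref{psi-Q} with a mean-value argument carried out in the scale-invariant measure $\tfrac{dr}{r}$, which assigns constant mass to annuli of fixed multiplicative width.

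First I would verify that the hypothesis $C_2 c_0 < 1$ places the annulus $[c_1\be_n\la_n,\, C_2\be_n\la_n]$ inside the interval controlled by \eqref{psi-Q}: since $C_2\be_n \le C_2 c_0\,\al_n < \al_n$ for all $n$, the annulus is contained in $[0,\al_n\la_n]$ for $n$ large. Restricting \eqref{psi-Q} to this annulus then yields
\begin{equation*}
I_n \;:=\; \int_{c_1\be_n\la_n}^{C_2\be_n\la_n}\frac{\abs{\psi(\tau_n,r)-Q(r/\la_n)}^2}{r^2}\,r\,dr \;\longrightarrow\; 0.
\end{equation*}
After the change of variables $r=\la_n e^u$ (so that $dr/r = du$ and $r/\la_n = e^u$), this becomes
\begin{equation*}
I_n \;=\; \int_{\log(c_1\be_n)}^{\log(C_2\be_n)} \abs{\psi(\tau_n,\la_n e^u)-Q(e^u)}^2\,du,
\end{equation*}
an integral over an interval of fixed length $\log(C_2/c_1)$ that is independent of $n$.

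Since the integrand is nonnegative and continuous (the solution is smooth by assumption in Proposition~\ref{str diag}), the mean-value theorem for integrals supplies a point $u_n$ in this interval with
\begin{equation*}
\abs{\psi(\tau_n,\la_n e^{u_n})-Q(e^{u_n})}^2 \;\le\; \frac{I_n}{\log(C_2/c_1)} \;\longrightarrow\; 0.
\end{equation*}
Setting $\ti\be_n:=e^{u_n}$ gives $c_1\be_n \le \ti\be_n \le C_2\be_n$, and since $\ti\be_n \ge c_1\be_n\to\infty$ while $Q(s)=2\arctan s\to \pi$ as $s\to\infty$, combining the two displayed facts yields $\psi(\tau_n,\ti\be_n\la_n)\to \pi$. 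There is no genuine obstacle in the argument; the only point to monitor is the containment $C_2\be_n\la_n\le \al_n\la_n$, which is the sole reason the restriction $C_2c_0<1$ is imposed, and the passage from $L^2$-type control to a single well-chosen radius is handled cleanly by the logarithmic measure precisely because annuli of fixed ratio carry uniformly bounded $du$-mass.
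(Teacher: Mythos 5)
Your proof is correct and is the natural argument one would use: the restriction $C_2 c_0 < 1$ is exactly what keeps the annulus $[c_1\be_n\la_n, C_2\be_n\la_n]$ inside the region where \eqref{psi-Q} gives control, and passing to the logarithmic variable turns the scale-invariant $L^2$ bound into an integral over an interval of $n$-independent length, from which the mean-value theorem produces a radius $\ti\be_n$ where the pointwise difference $|\psi(\tau_n,\ti\be_n\la_n)-Q(\ti\be_n)|$ vanishes; since $\ti\be_n\to\infty$ and $Q(s)\to\pi$, the conclusion follows. The paper itself does not spell out a proof of this corollary (it defers to the companion paper \cite{CKLS1}), but this is precisely the standard dyadic/pigeonhole argument expected there, so your write-up matches the intended route.
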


\subsection{Extraction of the radiation term}

In this subsection we construct what we will refer to as the radiation term, $\fy_L(t) \in \HH_0$ in the decomposition \eqref{dec seq}. 
\begin{prop}\label{rad prop} Let $\psi(t) \in \HH_1$ be a global wave map with $\E( \vec \psi)= \E(Q) + \eta < 3 \E(Q)$. Then there exists a solution $\fy_L(t) \in \HH_0$ to the linear wave equation \eqref{2d lin} so that for all $A \ge 0$ we have 
\begin{align}\label{rad}
\| \vec \psi(t) - (\pi, 0) - \vec \fy_L(t)\|_{H \times L^2( r \ge t-A)} \to 0 \quad \textrm{as} \quad t \to \infty.
\end{align}
Moreover,  for $n$ large enough we have 
\begin{align}\label{fy 2eq}
\E( \vec \fy_{L}(\tau_n)) \le C < 2 \E(Q).
\end{align}
\end{prop}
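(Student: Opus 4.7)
The plan is to build $\fy_L$ as a scattering state by first constructing, for each $n$, an approximating linear wave $\fy_L^n$ out of the outer portion of $\vec\psi(\tau_n) - (\pi, 0)$, and then passing to the limit. Let $\chi : [0, \infty) \to [0, 1]$ be a smooth cutoff with $\chi \equiv 0$ on $[0, 1/2]$ and $\chi \equiv 1$ on $[1, \infty)$. I would choose radii $R_n$ with $A_n \ll R_n \ll \tau_n$ (with $A_n$ as in Remark~\ref{rem str}), set $\chi_n(r) := \chi(r/R_n)$, and define
\begin{equation*}
\vec w_n := \bigl(\chi_n\,(\psi(\tau_n) - \pi),\ \chi_n\,\dot\psi(\tau_n)\bigr).
\end{equation*}
This pair lies in $\HH_0$ because the cutoff vanishes near the origin and $\psi(\tau_n) - \pi \to 0$ at infinity. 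Let $\fy_L^n$ solve the linear equation~\eqref{2d lin} with $\vec\fy_L^n(\tau_n) = \vec w_n$.

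Next I would establish a uniform energy bound. By Corollary~\ref{HxL2 for}, $\|\vec\psi(\tau_n) - (\pi, 0)\|_{H \times L^2(R_n \le r \le \tau_n - A)}^2 \to 0$ as $A, n \to \infty$. Combined with the bound $\|\vec\psi(\tau_n) - (\pi, 0)\|_{H \times L^2(r \ge \tau_n - A)}^2 \le \eta + o(1)$ (coming from the energy identity, since the harmonic map $Q(\cdot/\la_n)$ absorbs essentially all of $\E(Q)$ on $r \lesssim \al_n\la_n \ll R_n$ by Proposition~\ref{str diag}), and accounting for the cutoff error on $R_n/2 \le r \le R_n$ (which is negligible thanks to Proposition~\ref{ctzcor1} and the choice $R_n \gg A_n$), this gives $\|\vec w_n\|_{H \times L^2}^2 \le \eta + o(1)$. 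Energy conservation for~\eqref{2d lin} then yields $\E(\fy_L^n(t)) \le \eta + o(1)$ for all $t$, which together with the hypothesis $\eta < 2\E(Q)$ will provide~\eqref{fy 2eq} once the limit $\fy_L$ is identified.

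The heart of the matter is showing $\fy_L^n$ converges to a limit $\fy_L$ that approximates $\vec\psi(t) - (\pi, 0)$ in the exterior. Writing $\tilde\psi := \psi - \pi$ and using $\sin(2\tilde\psi) = 2\tilde\psi + O(\tilde\psi^3)$, the equation~\eqref{cp} becomes
\begin{equation*}
\tilde\psi_{tt} - \tilde\psi_{rr} - \tfrac{1}{r}\tilde\psi_r + \tfrac{\tilde\psi}{r^2} = \tfrac{\tilde\psi - \tfrac{1}{2}\sin(2\tilde\psi)}{r^2} = O\!\bigl(\tilde\psi^3/r^2\bigr).
\end{equation*}
On any region of the form $r \ge \la t$, Corollary~\ref{psi to pi la} gives $\|\tilde\psi(t)\|_{L^\infty(r \ge \la t)} \to 0$, so the source is cubically small in $\tilde\psi$ and well controlled by Hardy-type estimates. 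Combining this with Duhamel's formula for~\eqref{2d lin}, finite speed of propagation, and the identity $\vec\psi(\tau_n) - (\pi, 0) = \vec w_n$ on $r \ge R_n$, I would conclude that for $t \ge \tau_n$,
\begin{equation*}
\|\vec\psi(t) - (\pi, 0) - \vec\fy_L^n(t)\|_{H \times L^2(r \ge (t - \tau_n) + R_n)} \to 0 \quad \text{as } n \to \infty,
\end{equation*}
uniformly in the relevant range of $t$. Applying this with two indices $n < m$ on a common exterior cone gives the Cauchy property, hence strong convergence $\fy_L^n \to \fy_L$, and~\eqref{rad} follows by passing to the limit in the Duhamel bound.

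The main obstacle is this last Duhamel estimate. It hinges on two compounding smallness properties: the $L^\infty$ decay of $\tilde\psi$ on $r \ge \la t$ from Corollary~\ref{psi to pi la}, and the cubic structure $\tilde\psi^3/r^2$ of the remainder, so that one factor of $\tilde\psi/r$ can be absorbed into the energy norm via Hardy while the remaining $\tilde\psi^2$ acts as a small multiplier. A secondary technicality is the cutoff error from $|\chi_n'|$ on the annulus $R_n/2 \le r \le R_n$; this is controlled by the energy content of that annulus, which tends to zero by Proposition~\ref{ctzcor1} because $A_n \ll R_n \ll \tau_n$, and therefore does not pollute either the convergence or the energy bound.
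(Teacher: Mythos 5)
Your strategy of cutting off the outer part of $\vec\psi(\tau_n)-(\pi,0)$ and propagating it \emph{linearly}, then controlling the error by Duhamel, is genuinely different from what the paper does, and the core of it is not justified. The paper instead truncates $\psi(\tau_n)$ inside $\be_n\la_n$ via a carefully chosen \emph{linear interpolation} so that the resulting map $\vec\phi_n$ lies in $\HH_{1,1}$ with $\E(\vec\phi_n)\le C<2\E(Q)$, evolves $\vec\phi_n$ by the \emph{nonlinear} wave map flow, and then invokes the already-established subthreshold scattering theorem (\cite[Theorem~1.1]{CKLS1}) to extract the linear radiation $\phi_{n,L}$; by finite speed of propagation $\phi_{n,L}$ agrees with $\psi$ outside a shrinking cone. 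Your Duhamel step, which you yourself flag as ``the main obstacle,'' is exactly where this nonlinear input is hiding, and the smallness you cite is not enough to close it.

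Concretely, two gaps. First, the perturbation bound
\[
\|\vec\psi(t)-(\pi,0)-\vec\fy_L^n(t)\|_{H\times L^2(r\ge (t-\tau_n)+R_n)}\to 0
\]
is a global-in-time estimate, and the source $O(\tilde\psi^3/r^2)$ must be integrated over $[\tau_n,\infty)$. The $L^\infty$ decay from Corollary~\ref{psi to pi la} gives pointwise smallness at each fixed large time but does not by itself give the time-integrated control of a Strichartz-type norm (e.g.\ the $L^3_tL^6_x$ norm that appears in the paper) that is needed to close a Duhamel iteration; that control is what the appeal to \cite[Theorem~1.1]{CKLS1} supplies in the paper's argument, and re-deriving it for your cut-off linear comparison is essentially proving a piece of the degree-zero scattering theory from scratch, which you have not done. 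Second, the assertion that the cutoff error on $R_n/2\le r\le R_n$ is ``negligible thanks to Proposition~\ref{ctzcor1}'' is not right: that proposition controls $\E_{\la t}^{t-A}$, i.e.\ the region $\la\tau_n\le r\le\tau_n-A$ for fixed $\la\in(0,1)$, which does not reach the annulus $[R_n/2,R_n]$ when $R_n\ll\tau_n$. (The paper's linear-interpolation cutoff avoids this entirely: its energy is $O(|\pi-\psi(\tau_n,\be_n\la_n)|^2)\to 0$ by Corollary~\ref{tech}, with no reliance on Proposition~\ref{ctzcor1}.) Finally, even granting the Duhamel bound, the Cauchy argument for $\{\fy_L^n\}$ and the extraction of the limiting $\fy_L$ are not carried out; in the paper this step is done via a weak limit of $S(-\tau_n)\vec\fy_n$ together with a Bahouri--G\'erard profile decomposition and the orthogonality of $H\times L^2$ norms under sharp cut-offs, which is also what ultimately delivers the energy bound \eqref{fy 2eq} through the almost-orthogonality of the nonlinear energy (\cite[Lemma~2.16]{CKLS1}).
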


\begin{proof} 
To begin we pick any $\al_n \to \infty$ and find $\tau_n, \la_n$ as in Proposition~\ref{str diag}. Now let $\be_n \to \infty$ be any other sequence such that $\be_n \ll \al_n$. By Corollary~\ref{tech} we can assume that 
\begin{align}\label{to pi}
\psi(\tau_n, \be_n \la_n) \to \pi
\end{align}
as $n \to \infty$. We make the following definition: 
\begin{align} 
&\phi_n^0(r) = \begin{cases} \pi - \frac{\pi-\psi(\tau_n, \be_n\la_n)}{\be_n \la_n} r \quad \textrm{if} \quad 0\le r\le \be_n \la_n\\ \psi(\tau_n, r) \quad \textrm{if} \quad \be_n \la_n \le r < \infty \end{cases} \\
&\phi_n^1(r) = \begin{cases}0 \quad \textrm{if} \quad 0\le r\le \be_n \la_n\\ \dot{\psi}(\tau_n, r) \quad \textrm{if} \quad \be_n \la_n\le r < \infty. \end{cases} 
\end{align}
We claim that $\vec \phi_n:=( \phi_n^0, \phi_0^1) \in \HH_{1, 1}$ and $\E( \vec \phi_n) \le C < 2\E(Q)$.  Clearly $\phi_n^0(0)= \pi$ and $\phi_n^0(\infty)= \pi$. We claim that 
\begin{align} \label{E phi bela}
\E_{\be_n \la_n}^{\infty}(\vec \phi_n) = \E_{\be_n \la_n}^{\infty}(\vec \psi(\tau_n)) \le \eta + o_n(1).
\end{align} 
Indeed, since $\psi(\tau_n, \be_n \la_n) \to \pi$ we have $G(\psi(\tau_n, \be_n \la_n)) \to 2= \frac{1}{2}\E(Q)$ as $n \to \infty$. Therefore, by \eqref{b R} we have $$\E_0^{\be_n \la_n}(\psi(\tau_n), 0)  \ge 2G(\psi(\tau_n , \be_n \la_n)) \ge \E(Q)- o_n(1)$$ for large $n$ which  proves \eqref{E phi bela} since $ \E_{\be_n \la_n}^{\infty}(\vec \psi(\tau_n)) =  \E_{0}^{\infty}(\vec \psi(\tau_n))-  \E_{0}^{\be_n \la_n}(\vec \psi(\tau_n))$. 

We can also directly compute $\E_{0}^{\be_n \la_n} (\phi_n^0,0)$. Indeed,
\begin{align*} 
\E_{0}^{\be_n \la_n} (\phi_n^0,0)&= \int_0^{\be_n \la_n} \left( \frac{\pi-\psi(\tau_n, \be_n \la_n)}{\be_n \la_n}\right)^2\, r\, dr + \int_0^{\be_n \la_n} \frac{\sin^2\left( \frac{\pi-\psi(\tau_n, \be_n \la_n)}{\be_n \la_n} r\right)}{r} \, dr\\ \\
& \le C \abs{\pi- \psi(\tau_n, \be_n \la_n)}^2  \to 0 \quad \textrm{as} \quad   n \to \infty.
\end{align*} 
Hence $\E( \vec \phi_n)\le \eta + o_n(1)$. This means that for large enough $n$ we have the uniform estimates $\E( \vec \phi_n) \le C < 2\E(Q)$. Therefore, by \cite[Theorem $1.1$]{CKLS1}, (which holds with exactly the same statement in $\HH_{1, 1}$ as in $\HH_0= \HH_{0, 0}$), we have that the wave map evolution $\vec \phi_n(t) \in \HH_{1,1}$ with initial data $\vec \phi_n$ is global in time and scatters  to $\pi$ as $t \to \pm \infty$. The scattering statement means that for each $n$ we can find initial data $\vec \phi_{n, L}$ so that the solution, $S(t)\vec \phi_{n, L}$, to the linear wave equation \eqref{2d lin} satisfies
\begin{align*}
\| \vec \phi_n(t)- ( \pi, 0)- S(t)\vec \phi_{n, L}\|_{H \times L^2} \to 0 \quad \textrm{as} \quad t \to \infty.
\end{align*} 
Abusing notation, we set $$\vec\phi_{n,L}(t):= S(t- \tau_n) \vec \phi_{n, L}.$$
By the definition of $\vec \phi_n$ and the finite speed of propagation observe that we have 
\begin{align*}
\phi_n(t-\tau_n, r) = \psi(t, r) \quad \forall r \ge t- \tau_n + \be_n \la_n.
\end{align*}
 Therefore, for all fixed $m$ we have 
\begin{align}\label{scat fix m}
\|\vec \psi(t)- (\pi, 0)- \vec \phi_{m, L}(t)\|_{H \times L^2(r \ge t- \tau_m + \be_m \la_m)} \to 0 \quad \textrm{as} \quad t \to \infty,
\end{align}
and, in particular 
\begin{align}\label{scat nm}
\|\vec \phi_n - (\pi, 0) - \vec \phi_{m, L}(\tau_n)\|_{H \times L^2(r \ge \tau_n- \tau_m + \be_m \la_m)} \to 0 \quad \textrm{as} \quad n \to \infty.
\end{align}
Now set $\vec \fy_n = (\fy_n^0, \fy_n^1):= ( \phi_n^0, \phi_n^1) -( \pi, 0) \in \HH_0$. We have $\E( \vec \fy_n) \le C < 2\E(Q)$ by construction. Therefore the sequence $S(- \tau_n) \vec \fy_n$ is uniformly bounded in $H \times L^2$. 
Let $\vec \fy_L= (\fy_L^0, \fy_L^1)\in \HH_0$  be the weak limit of $S(- \tau_n) \vec \fy_n$ in $H \times L^2$ as $n \to \infty$, i.e., 
\begin{align*} 
S(-\tau_n) \vec \fy_n \rightharpoonup \vec \fy \quad \textrm{weakly in } \quad H \times L^2
\end{align*}
as $n \to \infty$. Denote by $\vec \fy_L(t):=S(t) \vec \fy_L$ the linear evolution of $\vec \fy_L$ at time $t$.  Following the construction  in \cite[Main Theorem]{BG} 
 we have the following profile decomposition for $\vec \fy_n$: 
\begin{align}\label{prof for fy}
\vec \fy_n(r)= \vec \fy_L(\tau_n, r) + \sum_{j=2}^k \left( \fy_L^j(t_n^j/\la_n^j, r/ \la_n^j), \, \frac{1}{\la_n^j} \dot \fy_L^j(t_n^j/\la_n^j, r/ \la_n^j)\right) + \vec \ga_n^k(r)
\end{align}
where if we label $\fy_L=: \fy_L^1$,  $\tau_n =: t_n^1$, and $\la_n^1 = 1$ this is exactly  a profile decomposition as in \cite[Corollary $2.15$]{CKLS1}.  Now observe that for each fixed $m$ we can write 
\begin{multline}\label{prof n-m}
\vec \fy_n(r)- \vec \phi_{m, L}(\tau_n, r) = \vec \fy_L(\tau_n, r)- \vec \phi_{m, L}(\tau_n, r) \\
\quad + \sum_{j=2}^k \left( \fy_L^j(t_n^j/\la_n^j, r/ \la_n^j), \, \frac{1}{\la_n^j} \dot \fy_L^j(t_n^j/\la_n^j, r/ \la_n^j)\right) + \vec \ga_n^k(r)
\end{multline}
and  \eqref{prof n-m} is still a profile decomposition in the sense of \cite[Corollary $2.15$]{CKLS1} for the sequence $\vec \fy_n(r)- \vec \phi_{m, L}(\tau_n, r)$. Since the pseudo-orthogonality of the $H\times L^2$ norm is preserved after sharp cut-offs, see \cite[Corollary $8$]{CKS} or \cite[Proposition $2.19$]{CKLS1}, we then have 
\begin{multline*}
\| \vec \fy_n- \vec \phi_{m, L}(\tau_n)\|_{H \times L^2 (r \ge \tau_n- \tau_m+ \be_m \la_m)}^2=  \|\vec \fy_L(\tau_n)- \vec \phi_{m, L}(\tau_n)\|_{H \times L^2 (r \ge \tau_n- \tau_m+ \be_m \la_m)}^2
 \\+ \sum_{j=2}^k \|\vec \fy_L^j(t_n^j/\la_n^j)\|_{H \times L^2 (r \ge \tau_n- \tau_m+ \be_m \la_m)} ^2
+\| \vec \ga_j^k\|_{H \times L^2 (r \ge \tau_n- \tau_m+ \be_m \la_m)}^2+ o_n(1)
\end{multline*}
Note that \eqref{scat nm} implies that the left-hand-side above tends to zero as $n \to \infty$. Therefore, since all of the terms on right-hand-side are non-negative we can deduce that 
\begin{align*}
\|\vec \fy_L(\tau_n)- \vec \phi_{m, L}(\tau_n)\|_{H \times L^2 (r \ge \tau_n- \tau_m+ \be_m \la_m)}^2 \to 0 \quad \textrm{as} \quad n \to \infty.
\end{align*}
Since, 
\begin{align*}
\vec \fy_L(\tau_n)- \vec \phi_{m, L}(\tau_n) = S(\tau_n)( \vec \fy - S(-\tau_m) \vec \phi_{m, L})
\end{align*}
is a solution to the linear wave equation, we can use the monotonicity of the energy on exterior cones to deduce that 
\begin{align*}
\|\vec \fy_L(t)- \vec \phi_{m, L}(t)\|_{H \times L^2 (r \ge t- \tau_m+ \be_m \la_m)}^2 \to 0 \quad \textrm{as} \quad t \to \infty.
\end{align*}
Combining the above with \eqref{scat fix m} we can conclude that 
\begin{align*}
\|\vec \psi(t) - (\pi, 0) - \vec \fy_{L}(t)\|_{H \times L^2 (r \ge t- \tau_m+ \be_m \la_m)}^2 \to 0 \quad \textrm{as} \quad t \to \infty.
\end{align*}
The above holds for each $m \in \N$  and for {\em any} sequence $\be_m  \to \infty$ with $\be_m <c_0\al_m$. Taking $\be_m \ll \al_m$ and recalling that $\tau_m \to \infty$ and $\la_m$ are such that $\al_m \la_m \ll \tau_m$ we have that $\tau_m- \be_m \la_m \to \I$ as $m \to \infty$.  Therefore, for any $A>0$ we can find $m$ large enough so that $\tau_m- \be_m \la_m>A$, which proves \eqref{rad} in light of the above. 

It remains to show \eqref{fy 2eq}. But this follows immediately from the decomposition \eqref{prof for fy} and the almost orthogonality of the nonlinear wave map energy for such a decomposition, see \cite[Lemma $2.16$]{CKLS1}, since we know that the left-hand-side of \eqref{prof for fy} satisfies 
\begin{align*}
\E( \vec \fy_n) \le C <2\E(Q)
\end{align*}
for large enough $n$. 
\end{proof}

Now that we have constructed the radiation term $\vec \fy_L(t)$ we denote by $\fy(t) \in \HH_0$ the global wave map that scatters to the linear wave $\vec\fy_L(t)$, i.e., $\vec \fy(t) \in \HH_0$ is the global solution to \eqref{cp} such that 
\begin{align}\label{fy fyL scat}
\| \vec\fy(t)- \vec \fy_L(t)\|_{H \times L^2} \to 0 \quad \textrm{as} \quad t  \to \I.
\end{align}
The existence of such a $\fy(t) \in \HH_0$ locally around  $t= + \infty$ follows immediately from the existence of wave operators for the corresponding $4d$ semi-linear equation. The fact that $\fy(t)$ is global-in-time follows from \cite[Theorem $1$]{CKLS1} since \eqref{fy 2eq} and \eqref{fy fyL scat} together imply that $\E(\vec \fy) <2\E(Q)$. 

We will need a few facts about the degree zero wave map $\vec \fy(t)$ which we collect in the following lemma. 

\begin{lem} Let $\vec \fy(t)$ be defined as above. Then we have 
\begin{align}
&\limsup_{t \to \I} \|\vec \fy(t)\|_{H \times L^2( \abs{r-t} \ge A)} \to 0 \quad \textrm{as} \quad A \to \infty\label{channel},\\
& \lim_{t \to \infty} \E_{t-A}^{\infty}( \vec \fy(t)) \to \E(\vec \fy) \quad \textrm{as} \quad A \to \infty \label{ext en A fy}.
\end{align}
\end{lem}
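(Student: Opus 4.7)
The plan is to deduce both \eqref{channel} and \eqref{ext en A fy} from the scattering identity \eqref{fy fyL scat}, which reduces everything to the corresponding dispersive statements for the linear solution $\vec{\fy}_L$. Indeed, given $\e>0$, \eqref{fy fyL scat} furnishes $T_\e$ with $\|\vec{\fy}(t)-\vec{\fy}_L(t)\|_{H\times L^2}<\e$ for $t\ge T_\e$, so that for any measurable set $D\subset[0,\infty)$,
\[
\|\vec{\fy}(t)\|_{H\times L^2(D)}\le \|\vec{\fy}_L(t)\|_{H\times L^2(D)}+\e.
\]
To work with $\vec{\fy}_L$ it is convenient to note that the equivariant lift $\Psi(t,x):=\fy_L(t,r)e^{i\te}$ on $\R^{1+2}$ satisfies the ordinary free wave equation $\Box\Psi=0$, and a direct computation yields $\|\vec{\fy}_L(t)\|_{H\times L^2}^2 \sim \|\vec{\Psi}(t)\|_{\dot H^1\times L^2(\R^2)}^2$. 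Thus both assertions become statements about concentration of the energy of a finite-energy $2d$ free wave near the light cone.

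For \eqref{channel} I would split $\{|r-t|\ge A\}$ into $\{r\ge t+A\}$ and $\{r\le t-A\}$. On the outer piece, the standard monotonicity of exterior energy on forward light cones for solutions of $\Box\Psi=0$ (obtained by integrating the linear energy identity on the truncated cone $\{|x|\ge s+A,\ 0\le s\le t\}$) gives
\[
\|\vec{\Psi}(t)\|_{\dot H^1\times L^2(|x|\ge t+A)}^2\le \|\vec{\Psi}(0)\|_{\dot H^1\times L^2(|x|\ge A)}^2,
\]
which tends to $0$ as $A\to\infty$ by dominated convergence, since $\vec{\Psi}(0)\in\dot H^1\times L^2(\R^2)$. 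On the inner piece, the required estimate
\begin{equation*}
\lim_{A\to\infty}\limsup_{t\to\infty}\|\vec{\fy}_L(t)\|_{H\times L^2(r\le t-A)}^2=0
\end{equation*}
is the standard fact that a finite-energy $2d$ wave asymptotically concentrates its energy near the light cone; I would establish it by density, approximating $(\fy_L,\dot\fy_L)|_{t=0}$ in $H\times L^2$ by $C_c^\I$ data, for which the $2d$ dispersive estimates give $|\Psi(t,x)|+|\nabla_{t,x}\Psi(t,x)|\lec t^{-3/2}$ on $\{|x|\le(1-\ka)t\}$ and hence interior energy $O(t^{-1})$, so that a triangle inequality in $H\times L^2$ combined with energy conservation yields the claim for general data. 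This is the main obstacle, since $2d$ lacks strong Huygens, so the bound must be obtained dispersively rather than from exact support properties. Combining the three contributions proves \eqref{channel}.

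For \eqref{ext en A fy}, the pointwise bound $\sin^2\fy\le\fy^2$ yields
\[
\E_0^{t-A}(\vec{\fy}(t))\le \int_0^{t-A}\bigl(\fy_t^2+\fy_r^2+\fy^2/r^2\bigr)\,r\,dr=\|\vec{\fy}(t)\|_{H\times L^2(r\le t-A)}^2,
\]
which, by the reduction of the first paragraph and the inner-cone estimate from the previous paragraph, tends to $0$ as $t,A\to\infty$. Since $\E(\vec{\fy})$ is conserved under the wave map flow we conclude
\[
\E_{t-A}^{\I}(\vec{\fy}(t)) = \E(\vec{\fy}) - \E_0^{t-A}(\vec{\fy}(t)) \longrightarrow \E(\vec{\fy})
\]
as $A\to\infty$, proving \eqref{ext en A fy}.
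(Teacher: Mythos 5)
Your architecture is sound and matches the paper's in outline: reduce to the linear free wave via the scattering identity \eqref{fy fyL scat}, handle the exterior region $\{r\ge t+A\}$ by monotonicity of exterior energy on forward cones, and deduce \eqref{ext en A fy} from \eqref{channel} using $\sin^2\fy\le\fy^2$ and conservation of energy. The paper does exactly this, except that it disposes of the linear concentration statement by simply citing \cite[Theorem 4]{CKS} (after converting to the radial $4d$ free wave $v_L=\fy_L/r$, which plays the role of your $\Psi=\fy_L e^{i\theta}$; the two reductions are equivalent).

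The gap is in the interior estimate, where you try to give a self-contained proof of the cited result. You establish (for $C_c^\infty$ data) a pointwise bound $|\nabla_{t,x}\Psi|\lesssim t^{-3/2}$ on $\{|x|\le(1-\ka)t\}$ for \emph{fixed} $\ka>0$, and conclude the energy there is $O(t^{-1})$. But what is needed is control of the energy on the region $\{|x|\le t-A\}$ for \emph{fixed} $A$ as $t\to\infty$. For any fixed $\ka>0$ and $A$, once $t>A/\ka$ we have $(1-\ka)t < t-A$, so the annulus $\{(1-\ka)t\le|x|\le t-A\}$ is nonempty and widening, and your bound says nothing about it. That annulus sits at distance $\ge A$ but $\le\ka t$ from the light cone, which (in the double limit $t\to\infty$ then $A\to\infty$) is precisely where the remaining energy lives, since the $2d$ wave lacks strong Huygens. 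As written, your argument proves only $\lim_{t\to\infty}\|\vec\fy_L(t)\|_{H\times L^2(r\le(1-\ka)t)}=0$ for each $\ka$, which is strictly weaker and does not imply \eqref{channel}. To repair this along the same lines, replace the flat $t^{-3/2}$ bound by a weighted one that tracks the distance $s:=t-|x|$ to the cone: for data supported in $B_R$ and $s\ge 2R$, a direct inspection of the $2d$ representation formula gives $|\nabla_{t,x}\Psi(t,x)|\lesssim s^{-3/2}t^{-1/2}$, whence
\[
\int_0^{t-A}|\nabla_{t,x}\Psi(t,r)|^2\,r\,dr \;\lesssim\;\int_A^t s^{-3}\,ds \;\lesssim\; A^{-2},
\]
uniformly in $t$, which is exactly the needed $\lim_{A\to\infty}\limsup_{t\to\infty}$ statement. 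With that correction your proposed argument becomes a genuine, self-contained alternative to citing \cite[Theorem 4]{CKS}; without it, the inner-cone step fails.
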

\begin{proof}First we prove \eqref{channel}. We have
\begin{align*}
\|\vec \fy(t)\|_{H \times L^2( \abs{r-t} \ge A)}^2 \le \| \vec\fy(t)- \vec \fy_L(t)\|_{H \times L^2}^2  + \|\fy_L(t)\|_{H \times L^2( \abs{r-t} \ge A)}^2.
\end{align*}
By \eqref{fy fyL scat} the first term on the right-hand-side above tends to $0$ as $t \to \infty$ so it suffices to show that 
\begin{align*}
\limsup_{t \to \I}  \|\fy_L(t)\|_{H \times L^2( \abs{r-t} \ge A)}^2 \to 0 \quad \textrm{as} \quad A \to \infty.
\end{align*}
Since $\fy_L(t)$ is a solution to \eqref{2d lin} the above follows from \cite[Theorem $4$]{CKS} by passing to the analogous statement for the corresponding $4d$ free wave $v_L(t)$  given by $$r v_L(t, r):= \fy_L(t, r).$$
To prove \eqref{ext en A fy} we note that the limit as $t \to \infty$ exists for any fixed $A$ due to the monotonicity of the energy on exterior cones. Next observe that we have 
\begin{align}\label{inner en fy}
\lim_{t \to \infty}\E_0^{t-A}( \vec \fy(t)) \le \lim_{t \to \infty}\|\vec \fy(t)\|_{H \times L^2( r \le t-A)}^2 \to 0 \quad \textrm{as} \quad A \to \infty
\end{align}
by \eqref{channel} and then \eqref{ext en A fy} follows immediately from the conservation of energy. 
\end{proof}
Now, observe that we can combine Proposition~\ref{rad prop} and \eqref{fy fyL scat} to conclude that for all $A  \ge 0$ we have
\begin{align}\label{da supp1}
\| \vec \psi(t)- (\pi, 0) - \vec \fy(t) \|_{H \times L^2( r\ge t-A)} \to 0 \quad \textrm{as} \quad t \to \I.
\end{align}
With this in mind we define $a(t)$ as follows: 
\begin{align}\label{a def}
\vec a(t) := \vec \psi(t)- \vec \fy(t)
\end{align}
and we aggregate some preliminary information about $a$ in the following lemma: 

\begin{lem}\label{a lem}
Let $\vec a(t)$ be defined as in \eqref{a def}. Then $\vec a(t) \in \HH_1$ for all $t$. Moreover,  
\begin{itemize}
\item for all $\la>0$  we have 
\begin{align}\label{da supp}
\|\vec a(t)-(\pi, 0) \|_{H \times L^2( r \ge \la t)} \to 0 \quad \textrm{as} \quad  t \to \I,
\end{align}
\item the quantity $\E(\vec a(t) )$ has a limit as $t \to \infty$ and 
\begin{align}\label{ea}
\lim_{t \to \infty}\E(\vec a(t)) = \E(\vec \psi)- \E(\vec \fy).
\end{align}
\end{itemize}
\end{lem}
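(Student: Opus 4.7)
The plan is to verify the three assertions in turn. For the membership $\vec a(t) \in \HH_1$, since $\vec\psi(t) \in \HH_1$ and $\vec\fy(t) \in \HH_0$ we immediately have $a(t,0)=\psi(t,0)-\fy(t,0)=0$ and $a(t,\infty)=\pi-0=\pi$, so the topological constraints defining $\HH_1$ are met. Finiteness of $\E(\vec a(t))$ follows from the $L^2$ triangle inequality for $\psi_t-\fy_t$ and $\psi_r-\fy_r$, together with the pointwise bound
\[
|\sin(\psi-\fy)| = |\sin\psi\cos\fy - \cos\psi\sin\fy| \le |\sin\psi|+|\sin\fy|,
\]
which yields $\int \sin^2(a)/r^2\,r\,dr \le 2\E(\vec\psi)+2\E(\vec\fy) < \infty$.

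For \eqref{da supp}, I would split $\{r \ge \la t\}$ into an inner annulus $\{\la t \le r \le t-A\}$ and an outer band $\{r \ge t-A\}$. On the outer band, \eqref{da supp1} reads exactly as $\vec\psi(t)-(\pi,0)-\vec\fy(t) \to 0$ in $H\times L^2(r \ge t-A)$, which is $\vec a(t)-(\pi,0)$ verbatim. On the inner annulus, the triangle inequality bounds $\|\vec a-(\pi,0)\|_{H\times L^2}$ by $\|\vec\psi-(\pi,0)\|_{H\times L^2}+\|\vec\fy\|_{H\times L^2}$; the first term vanishes as $t \to \infty$ then $A \to \infty$ by Corollary~\ref{HxL2 for} applied to $\vec\psi \in \HH_1$, and the second by \eqref{channel} since $r \le t-A$ forces $|r-t| \ge A$.

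The main substance lies in the energy limit. The plan is to write $\E(\vec a(t))=\E_0^{t-A}(\vec a)+\E_{t-A}^\infty(\vec a)$ and treat each piece. The outer piece vanishes as $t,A \to \infty$ because $\vec a-(\pi,0) \to 0$ in $H\times L^2(r\ge t-A)$ and $\sin^2(a)=\sin^2(a-\pi) \le (a-\pi)^2$. For the inner piece, I would show $\E_0^{t-A}(\vec a)-\E_0^{t-A}(\vec\psi)\to 0$: the kinetic and radial-gradient cross-terms are estimated by Cauchy--Schwarz against $\sqrt{\E(\vec\psi)}$ and the smallness of $\|\vec\fy\|_{H\times L^2(r\le t-A)}$ (a consequence of \eqref{inner en fy}), while the potential difference is expanded via $\sin^2 X - \sin^2 Y = \sin(X+Y)\sin(X-Y)$ as
\[
\sin^2(\psi-\fy)-\sin^2\psi = -\sin(2\psi)\cos\fy\sin\fy + \cos(2\psi)\sin^2\fy,
\]
both terms of which are controlled by $\|\vec\fy\|_{H\times L^2(r\le t-A)}$ after Cauchy--Schwarz, using $|\sin(2\psi)| \le 2|\sin\psi|$ together with $\int \sin^2\psi/r^2\,r\,dr \le \E(\vec\psi)$. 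A parallel expansion on the outer band, combined with \eqref{da supp1}, shows $\E_{t-A}^\infty(\vec\psi)-\E_{t-A}^\infty(\vec\fy) \to 0$, and \eqref{ext en A fy} gives $\E_{t-A}^\infty(\vec\fy) \to \E(\vec\fy)$. Combining with $\E_0^{t-A}(\vec\psi) = \E(\vec\psi) - \E_{t-A}^\infty(\vec\psi)$ yields $\E_0^{t-A}(\vec a) \to \E(\vec\psi)-\E(\vec\fy)$, and hence the claim.

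The anticipated main obstacle is handling the nonlinear potential $\sin^2(\psi-\fy)$, which does not linearize directly. The trigonometric identity above is the key device: it isolates a factor of $\sin\fy$ that is small in the relevant norm on $\{r \le t-A\}$ (respectively $\{r \ge t-A\}$ for the outer splitting), while leaving a factor that is bounded in terms of the global energy of $\vec\psi$. Everything else reduces to Cauchy--Schwarz and the spatial localization statements already in hand.
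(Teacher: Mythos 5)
Your proof is correct and follows essentially the same route as the paper: split $\{r \ge \lambda t\}$ into an inner annulus and the outer band $\{r \ge t-A\}$, use \eqref{da supp1}, \eqref{channel}/\eqref{inner en fy}, Corollary~\ref{HxL2 for}, and \eqref{ext en A fy}, then control the nonlinear potential by a trigonometric expansion plus Cauchy--Schwarz. The only cosmetic difference is your choice of the identity $\sin^2 X - \sin^2 Y = \sin(X+Y)\sin(X-Y)$ where the paper uses $\abs{\sin^2 x - \sin^2 y} \le 2\abs{\sin y}\abs{x-y} + 2\abs{x-y}^2$; both isolate a small factor in exactly the same way, and your additional remark on the finiteness of $\E(\vec a(t))$ is a harmless (and welcome) elaboration of a point the paper leaves implicit.
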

\begin{proof}
By definition we have $a(t) \in \HH_1$ for all $t$ since $$a(t, 0)=0, \, \, a(t, \infty)= \pi.$$ To prove \eqref{da supp} observe that for every $A \le (1- \la)t$ we have
\begin{align*}
\|\vec a(t)-(\pi, 0) \|_{H \times L^2( r \ge \la t)}^2 &\le \|\vec \psi(t)-(\pi, 0) \|_{H \times L^2( \la t \le r\le t-A)}^2 \\
& \quad+ \|\vec\fy(t)\|_{H \times L^2(\la t \le r \le t-A)}^2\\
& \quad +\|\vec a(t)-(\pi, 0) \|_{H \times L^2( r \ge t-A)}^2.
\end{align*}
Then \eqref{da supp} follows by combining \eqref{da supp1}, \eqref{channel}, and \eqref{limsup H}. To prove \eqref{ea} we first claim that 
\begin{align}\label{epsi=efy}
\lim_{A \to \infty} \lim_{t \to \infty} \E_{t-A}^{\infty}( \vec \psi(t)) = \E(\vec \fy).
\end{align}
Indeed, we have
\begin{align*}
 \E_{t-A}^{\infty}( \vec \psi(t)) &=  \int_{t-A}^{\infty} [(\psi_t(t)- \fy_t(t) + \fy_t(t))^2+(\psi_r(t)- \fy_r(t) + \fy_r(t))^2] \, r\, dr\\
 &\quad + \int_{t-A}^{\I} \frac{\sin^2( \psi(t)- \pi - \fy(t) + \fy(t))}{r} \, dr\\
 &= \E_{t-A}^{\infty}( \vec \fy(t)) + \| \vec \psi(t)-(\pi, 0) - \vec \fy(t)\|_{\dot{H}^1 \times L^2( r \ge t-A)}^2\\
 &\quad+O\left( \| \vec \psi(t)-(\pi, 0) - \vec \fy(t)\|_{\dot{H}^1 \times L^2( r \ge t-A)}\| \vec \fy(t)\|_{\dot{H}^1 \times L^2( r \ge t-A)}\right)\\
 &\quad + \int_{t-A}^{\infty}  \frac{\sin^2( \psi(t)- \pi - \fy(t) + \fy(t))- \sin^2(\fy(t))}{r} \, dr\\
 &= \E_{t-A}^{\infty}( \vec \fy(t)) + O\left(\| \vec \psi(t)-(\pi, 0) - \vec \fy(t)\|_{H \times L^2( r \ge t-A)}^2\right)\\
 &\quad +O\left( \sqrt{\E(\vec \fy)}\| \vec \psi(t)-(\pi, 0) - \vec \fy(t)\|_{H\times L^2( r \ge t-A)}\right),
 \end{align*}
 which proves \eqref{epsi=efy} in light of \eqref{ext en A fy} and \eqref{da supp1}. In the third equality above we have used the simple trigonometric inequality: 
 \begin{align*}
 \abs{\sin^2(x-y+y)- \sin^2(y)} \le 2\abs{\sin(y)} \abs{x-y} + 2 \abs{x-y}^2.
 \end{align*}
 Now, fix $\de>0$. By \eqref{channel}, \eqref{epsi=efy}, and \eqref{da supp1} we can choose $A, T_0$ large enough so that for all $t \ge T_0$ we have
 \begin{align*}
& \| \vec \fy(t)\|_{ H \times L^2(r \le t-A)}  \le \de,\\
 &\abs{\E_{t-A}^{\I}( \vec \psi(t))- \E(\vec \fy)} \le \de,\\
 & \|\vec a(t)-(\pi, 0)\|_{H \times L^2(r \ge t-A)}^2 \le \de.
 \end{align*}
 Then for all $t \ge T_0$ and $A$ as above we can argue as before to obtain 
 \begin{align*}
 \E( \vec a(t))&= \E_0^{t-A}( \vec a(t)) + O( \|\vec a(t) -(\pi, 0)\|_{H \times L^2(r \ge t-A)}^2\\
 &=  \E_0^{t-A}( \vec \psi(t)) + O\left( \sqrt{\E(\vec \psi)}\|\vec \fy(t)\|_{H \times L^2(r \le t-A)}\right)\\
 & \quad + O\left( \|\vec \fy(t)\|_{H \times L^2(r \le t-A)}^2\right) + O\left( \|\vec a(t) -(\pi, 0)\|_{H \times L^2(r \ge t-A)}^2\right)\\
 &=  \E( \vec \psi)-   \E^{\infty}_{t-A}( \vec \psi(t)) +O(\de)\\
 &= \E( \vec \psi)-   \E( \vec \fy) +O(\de),
 \end{align*}
 which proves \eqref{ea}.
 \end{proof}

We will also need the following technical lemma in the next section.
\begin{lem} \label{tech lem} For any sequence $\s_n>0$ with $\la_n \ll \s_n \ll \tau_n$ we have 
\begin{align}
\lim_{n \to \infty} \frac{1}{\s_n} \int_{\tau_n}^{\tau_n + \s_n} \int_0^{\infty} \dot a^2( t, r) \, r \, dr \, dt =0.
\end{align}
\end{lem}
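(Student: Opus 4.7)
The plan is to bound $\int_0^\infty \dot a^2\, r\, dr$ by splitting space at $r = \la_0 t$ for a small, fixed $\la_0 \in (0,1)$ and handling each region with a different tool. Writing $\dot a = \dot\psi - \dot\fy$ and using $(\dot\psi - \dot\fy)^2 \le 2\dot\psi^2 + 2\dot\fy^2$, the exterior region $r \ge \la_0 t$ will be controlled \emph{pointwise in $t$} by the dispersion statement \eqref{da supp} in Lemma~\ref{a lem}, while the interior region $r \le \la_0 t$ will be split into a $\dot\psi$-contribution handled via Lemma~\ref{sig lem} and a $\dot\fy$-contribution handled via the concentration estimate \eqref{channel}.

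More concretely, I would fix $\e > 0$ arbitrarily and let $\la_0 \in (0,1)$ be small. Recall from Remark~\ref{rem str} that $\tau_n \in [t_n, t_n + \la_n]$, where $\{t_n\}$ is the sequence produced by Lemma~\ref{sig lem} applied to some $A(t)$ with $A(t) \nearrow \infty$ and $A(t) \ll t$, and that $A_n := A(t_n)$ satisfies $A_n \to \infty$ with $A_n \ll t_n \sim \tau_n$. Since $\la_n \ll \s_n \ll \tau_n$, for all large $n$ and all $t \in [\tau_n, \tau_n + \s_n]$ one has $\la_0 t < t - A_n$, so
\begin{align*}
\int_0^{\la_0 t}\dot a^2(t,r)\, r\, dr \;\le\; 2\int_0^{t-A_n}\dot\psi^2(t,r)\, r\, dr \;+\; 2\int_0^{t-A_n}\dot\fy^2(t,r)\, r\, dr.
\end{align*}
For the $\dot\psi$ integral I would apply Lemma~\ref{sig lem} with the choice $\s = \la_n + \s_n$ and use the inclusion $[\tau_n, \tau_n + \s_n] \subset [t_n, t_n + \la_n + \s_n]$ together with $\la_n/\s_n \to 0$ to conclude that its $\s_n$-average is $< \e$ for $n$ large. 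For the $\dot\fy$ integral, since $\{r \le t - A_n\} \subset \{|r - t| \ge A_n\}$ and $A_n \to \infty$, the concentration property \eqref{channel} makes this quantity $< \e$ pointwise in $t \in [\tau_n, \tau_n + \s_n]$ for $n$ large, so its Cesàro average is also $< \e$.

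For the exterior region $r \ge \la_0 t$, I would invoke \eqref{da supp}, which says $\|\vec a(t) - (\pi,0)\|_{H \times L^2(r \ge \la_0 t)} \to 0$ as $t \to \infty$; in particular $\int_{\la_0 t}^\infty \dot a^2\, r\, dr \to 0$ pointwise in $t$, which immediately gives a small Cesàro mean on $[\tau_n, \tau_n + \s_n]$. Combining the three bounds yields
\begin{align*}
\frac{1}{\s_n}\int_{\tau_n}^{\tau_n + \s_n}\int_0^\infty \dot a^2(t,r)\, r\, dr\, dt \;=\; O(\e)
\end{align*}
for $n$ large, and the lemma follows since $\e > 0$ was arbitrary.

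The main subtlety is that the three decay mechanisms operate at different scales and in different senses: $\dot\psi$ vanishes on the inner cone only in the time-averaged sense of Lemma~\ref{sig lem}, $\dot\fy$ concentrates near the light cone pointwise in $t$ via \eqref{channel}, and $\dot a$ decays outside any linear cone $r \ge \la_0 t$ via \eqref{da supp}. No single one of these estimates yields pointwise-in-$t$ control of $\dot a$ uniformly in space, so the cone split at $r = \la_0 t$ coupled with the inner split at $r = t - A_n$ is what allows the three ingredients to be matched up. The key compatibility input is the condition $\la_n \ll \s_n \ll \tau_n$, which is exactly what is needed to pass from Lemma~\ref{sig lem}'s averaging on $[t_n, t_n + \s]$ to averaging on $[\tau_n, \tau_n + \s_n]$ without losing uniformity.
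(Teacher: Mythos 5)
Your proposal is correct and follows essentially the same route as the paper: split at a linear cone $r = \la_0 t$, control the exterior by \eqref{da supp} pointwise in $t$, and on the interior bound $\dot a^2 \lesssim \dot\psi^2 + \dot\fy^2$, handling $\dot\psi$ via Lemma~\ref{sig lem} (with $\s = \la_n + \s_n$ and the inclusion $[\tau_n, \tau_n + \s_n] \subset [t_n, t_n + \la_n + \s_n]$) and $\dot\fy$ via \eqref{channel}. The only cosmetic difference is that you let $A_n$ grow in the $\dot\fy$ estimate, whereas the paper fixes $A$ and then sends $A \to \infty$ after $n \to \infty$; both are valid by monotonicity of the exterior integral in $A$.
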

\begin{proof}
Fix  $0<\la<1$.  For each $n $ we have
\begin{align*}
\frac{1}{\s_n} \int_{\tau_n}^{\tau_n + \s_n} \int_0^{\infty} \dot a^2( t, r) \, r \, dr \, dt & \le \frac{1}{\s_n} \int_{\tau_n}^{\tau_n + \s_n} \int_0^{\la t} \dot a^2( t, r) \, r \, dr \, dt \\
& \quad + \frac{1}{\s_n} \int_{\tau_n}^{\tau_n 
 + \s_n} \int_{\la t}^{\infty} \dot a^2( t, r) \, r \, dr \, dt .
\end{align*}
By \eqref{da supp} we can conclude that 
\begin{align*}
\lim_{n \to \infty} \sup_{t \ge \tau_n} \int_{\la t}^{\infty} \dot{a}^2(t, r) \, r \, dr  =0.
\end{align*}
Hence it suffices to show that 
\begin{align*}
\lim_{n \to \infty} \frac{1}{\s_n} \int_{\tau_n}^{\tau_n + \s_n} \int_0^{\la t} \dot a^2( t, r) \, r \, dr \, dt =0.
\end{align*}
Observe that for every $n$ we have 
\begin{align} \label{a le psi fy}
\frac{1}{\s_n} \int_{\tau_n}^{\tau_n + \s_n} \int_0^{\la t} \dot a^2( t, r) \, r \, dr \, dt  &\lesssim \frac{1}{\s_n} \int_{\tau_n}^{\tau_n + \s_n} \int_0^{\la t} \dot \psi^2( t, r) \, r \, dr \, dt \\
&\quad+\frac{1}{\s_n} \int_{\tau_n}^{\tau_n + \s_n} \int_0^{\la t} \dot \fy^2( t, r) \, r \, dr \, dt.  \notag
\end{align}
We first estimate the first integral on the right-hand-side above. Let $A_n \to \infty$ be the sequence in Proposition~\ref{str diag}, see also Remark~\ref{rem str}, and let $t_n \to \infty$ be the sequence in Theorem~\ref{struwe}. Recall that we have $\tau_n \in [t_n, t_n + \la_n]$ and $\la_n \le A_n \ll t_n$. 

Observe that for $n$ large enough we have that for each $t \in [ \tau_n, \tau_n + \s_n]$ we have $\la t \le t- A_n$.  Hence, 
\begin{align*}
\frac{1}{\s_n} \int_{\tau_n}^{\tau_n + \s_n} \int_0^{\la t} \dot \psi^2( t, r) \, r \, dr \, dt  \le \frac{1}{\s_n} \int_{\tau_n}^{\tau_n + \s_n} \int_0^{t- A_n} \dot \psi^2( t, r) \, r \, dr \, dt.
\end{align*}
Next, note that since $\la_n \ll \s_n $ we can ensure that for $n$ large enough we have $\la_n + \s_n \le 2 \s_n$. Therefore, 
\begin{align*}
\frac{1}{\s_n} \int_{\tau_n}^{\tau_n + \s_n} \int_0^{t- A_n} \dot \psi^2( t, r) \, r \, dr \, dt \le  \frac{2}{\la_n+ \s_n} \int_{t_n}^{t_n + \la_n+ \s_n} \int_0^{t- A_n} \dot \psi^2( t, r) \, r \, dr \, dt \to 0
\end{align*}
as $n \to \infty$ by Lemma~\ref{sig}.

Lastly we estimate the second integral on the righ-hand-side of \eqref{a le psi fy}. For each $A>0$ we can choose $n$ large enough so that $\la t \le t-A$ for each $t \in [\tau_n, \tau_n + \s_n]$. So we have
 \begin{align*}
 \frac{1}{\s_n} \int^{\tau_n+ \s_n}_{\tau_n} \int_0^{\la t} \dot \fy^2(t, r) \, r\, dr\, dt \le  \frac{1}{\s_n} \int^{\tau_n+ \s_n}_{\tau_n} \int_0^{t-A} \dot \fy^2(t, r) \, r\, dr\, dt.
 \end{align*}
 Taking the limsup as $n  \to \infty$ of both sides and then letting $A \to \infty$ on the right we have by \eqref{channel} that the left-hand-side above tends to $0$ as $n \to \infty$. This concludes the proof. 
\end{proof}


\subsection{Compactness of the error}
For the remainder of this section, we fix $\al_n \to \infty$ and find $\tau_n \to \infty$ and $\la_n \ll \tau_n$ as in Proposition~\ref{str diag}. We define $\vec b_n =(b_{n, 0}, b_{n, 1})\in \HH_0$ as follows:
\begin{align}\label{bn def}
&b_{n, 0}(r):= a(\tau_n, r)- Q(r/ \la_n),\\
&b_{n, 1}(r):= \dot{a}(\tau_n, r)\label{bn1 def}.
\end{align}
As in \cite[Section $5.3$]{CKLS1}, our goal in this subsection is to show that $\vec b_n \to 0$ in the energy space. Indeed we prove the following result: 

\begin{prop}\label{b compact}
Define $\vec b_n \in \HH_0$ as in \eqref{bn def}, \eqref{bn1 def}. Then, 
\begin{align}
\|\vec b_n \|_{H \times L^2} \to 0 \quad \textrm{as} \quad n \to \infty.
\end{align}
\end{prop}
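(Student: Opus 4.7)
My plan, which mirrors the compactness argument in~\cite[Section~5.3]{CKLS1}, is to first localize $\vec b_n$ away from the origin and away from the light cone, and then run a profile decomposition on the intermediate annulus that remains. I will begin by establishing the two-sided localization
\begin{align*}
\|\vec b_n\|_{H\times L^2(r\le \al_n\la_n)} + \|\vec b_n\|_{H\times L^2(r\ge \tau_n-A)} \to 0
\end{align*}
for every fixed $A>0$. The inner bound follows from Proposition~\ref{str diag}(b) (which controls $\vec\psi(\tau_n)-(Q_{\la_n},0)$ on $[0,\al_n\la_n]$) combined with the channel estimate~\eqref{channel} applied to $\vec\fy(\tau_n)$, using that $\al_n\la_n\ll\tau_n$ places $\{r\le \al_n\la_n\}$ well inside the region $\{|r-\tau_n|\ge A\}$. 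For the outer bound I use the elementary expansion $\pi-Q(s)=O(1/s)$ to get $\|Q_{\la_n}-\pi\|_{H(r\ge \al_n\la_n)}=O(1/\al_n)$, combined with~\eqref{da supp1}.

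Next, identity~\eqref{ea} together with the coercivity of the energy near~$Q$ yields $\E(\vec b_n)\le\eta+o(1)<2\E(Q)$, so by~\cite[Theorem~1.1]{CKLS1} the wave-map evolution of the data $\vec b_n\in\HH_0$ is global and scatters. I then apply a Bahouri--G\'erard decomposition in $H\times L^2$ as in~\cite[Corollary~2.15]{CKLS1} to write $\vec b_n$ as a superposition of concentrating linear profiles $V_L^j$ with parameters $(t_n^j,\mu_n^j)$ plus an error $\vec\ga_n^J$. The pseudo-orthogonality of the $H\times L^2$-norm under sharp cut-offs~\cite[Proposition~2.19]{CKLS1} combined with the two-sided localization of the previous paragraph forces any nontrivial profile to satisfy $\la_n\ll\mu_n^j$ and $\mu_n^j=o(\tau_n)$ after extraction of a subsequence, i.e., to live at a strictly intermediate spatial scale.

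The heart of the argument is a rigidity step: construct an approximate nonlinear solution from the profiles on a time interval $[0,\s_n]$ with $\la_n\ll\s_n\ll\tau_n$, compare it with $\vec\psi(\tau_n+\cdot)-\vec\fy(\tau_n+\cdot)$, and exploit Lemma~\ref{tech lem} (vanishing time-average of $\int\dot a^2\,r\,dr$), Corollary~\ref{ces lem}, and the channel-of-energy identity for radial $2$d free waves (\cite[Theorem~4]{CKS}) to force each profile to vanish. Once the profiles are eliminated the decomposition collapses to $\vec b_n=\vec\ga_n^J+o_n(1)$, and smallness of $\vec\ga_n^J$ in the scattering norm together with the perturbation theory from~\cite{CKLS1} yields $\|\vec b_n\|_{H\times L^2}\to 0$. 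I expect the rigidity step to be the main obstacle: whereas in the blow-up case of~\cite{CKLS1} the background $\vec\fy$ is a fixed finite-energy profile, here $\vec\fy$ is itself asymptotically dispersive, so controlling the interaction between the nonlinear profile evolutions and $\vec\fy(\tau_n+\cdot)$ on the expanding cone is delicate; Lemma~\ref{tech lem} is the key quantitative substitute for the static information available in the blow-up setting.
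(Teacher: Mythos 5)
Your proposal follows the paper's overall architecture — localize $\vec b_n$ away from the origin and the light cone, run a Bahouri--G\'erard decomposition, show profiles live at scales strictly between $\la_n$ and $\tau_n$, and kill them via a rigidity argument — but it has a genuine gap at the very end, and a smaller imprecision in the localization step.

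\textbf{The main gap.} Eliminating all profiles does \emph{not} yield $\|\vec b_n\|_{H\times L^2}\to 0$. What the rigidity step actually produces (Proposition~\ref{no prof} in the paper) is the decomposition $\vec b_n(t)=\vec b_{n,L}(t)+\vec\theta_n(t)$ with $\|\vec\theta_n\|_{L^\infty_t(H\times L^2)}\to 0$ and $\|\frac{1}{r}b_{n,L}\|_{L^3_tL^6_x}\to 0$. The remainder $\vec\ga_n^J$ in a Bahouri--G\'erard decomposition is small only in the dispersive ($L^3_tL^6_x$-type) norm; its $H\times L^2$ norm may remain bounded below. Small scattering norm plus perturbation theory buys you global existence and scattering for $\vec b_n(t)$, not smallness of the data. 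The paper closes this gap with an additional contradiction argument that you have omitted: assume $\|b_{n,0}\|_H\ge\de_0$ along a subsequence. Then, since $\vec b_{n,L}$ has data of the form $(b_{n,0},0)$, the exterior-energy estimate of \cite[Corollary~5]{CKS} gives $\|\vec b_{n,L}(t)\|_{H\times L^2(r\ge t)}\ge\be_0\|b_{n,0}\|_H$ for all $t\ge 0$, and the $L^\infty_t$-smallness of $\vec\theta_n$ transfers this lower bound to the nonlinear flow (Corollary~\ref{ext en est}): $\|\vec b_n(t)\|_{H\times L^2(r\ge t)}\ge\al_0\de_0$ for all $t>0$. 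The paper then rescales by $\tau_n$, propagates the decomposition $\vec g_n=(Q(\cdot/\mu_n),0)+\vec h_n+\vec{\ti b}_n+(\text{error})$ forward first to $t=\be_n\mu_n/2$ and then to a freely chosen $s_n\to\infty$, and combines the resulting identity on $\{r\ge s_n\}$ with the exterior decay $\|\vec a(t)-(\pi,0)\|_{H\times L^2(r\ge\la t)}\to 0$ to force $\|\vec{\ti b}_n(s_n)\|_{H\times L^2(r\ge s_n)}\to 0$, contradicting the channel lower bound. Without this step (or some replacement for it), the proof does not conclude.

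\textbf{A secondary imprecision.} Your outer localization $\|\vec b_n\|_{H\times L^2(r\ge\tau_n-A)}\to 0$ for each fixed $A$ is insufficient to force $\mu_n^j=o(\tau_n)$ for nontrivial profiles: a profile concentrating at scale $\mu_n^j\sim c\tau_n$ with $c$ small sits well inside $\{r<\tau_n-A\}$ and is invisible to that bound. You need the stronger statement $\|b_{n,0}\|_{H(r\ge\la\tau_n)}\to 0$ for every fixed $\la>0$ (Lemma~\ref{b prelim}(c) in the paper). That in turn does not follow from~\eqref{da supp1} alone: it also needs the intermediate-region control~\eqref{limsup H}, which is Proposition~\ref{ctzcor1} (the Christodoulou--Tahvildar-Zadeh exterior energy decay) combined with Corollary~\ref{psi to pi la}, together with the channel estimate~\eqref{channel} for $\vec\fy$. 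This is exactly what the paper's~\eqref{da supp} packages. The fix is easy, but as written your localization does not establish what you claim it does.
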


\begin{rem} In light of \eqref{fy fyL scat}, it is clear  that Proposition~\ref{b compact} implies Proposition~\ref{global}. 
\end{rem}

\begin{rem} The proof of Proposition~\ref{b compact} will follow the same  strategy as~\cite[Proposition~$5.6$]{CKLS1} and we refer the reader to the outline given there for a general overview of the proof. 
\end{rem}

We begin with the following consequences of the previous sections.  

\begin{lem} \label{b prelim} Let $\vec b_n \in \HH_0$ be defined as above. Then we have 
\begin{itemize}
\item[(a)] As $n \to \I$ we have 
\begin{align}\label{bn1 to 0}
 \|b_{n, 1}\|_{L^2} \to 0.
 \end{align}
 \item[(b)] As $n \to \infty$ we have 
 \begin{align}\label{bn0 int}
 \|b_{n, 0}\|_{H( r \le \al_n \la_n)} \to 0.
 \end{align}
 \item[(c)] For any fixed $\la>0$ we have 
 \begin{align}\label{bn0 ext}
 \|b_{n, 0}\|_{H ( r \ge \la \tau_n)} \to 0 \quad \textrm{as} \quad n \to \infty.
 \end{align}
 \item[(d)] There exists a $C>0$ so that 
 \begin{align}\label{e bn}
 \E(\vec b_n) \le C < 2\E(Q)
 \end{align}
 for $n$ large enough. 
 \end{itemize}
 \end{lem}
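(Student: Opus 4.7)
All four assertions follow from combining Proposition~\ref{str diag}, the identification of the radiation $\vec\fy$ (as encoded in \eqref{da supp1}, \eqref{da supp}, \eqref{channel}, \eqref{inner en fy}), the energy identity \eqref{ea}, and the explicit pointwise decay of the harmonic map $Q$. The guiding picture is that the three pieces of the decomposition $\vec\psi(\tau_n) = \vec\fy(\tau_n) + (Q(\cdot/\la_n),0) + \vec b_n$ live at well-separated spatial scales: $\vec\fy(\tau_n)$ concentrates in the thin annulus $|r-\tau_n|\lesssim 1$, the soliton $(Q(\cdot/\la_n),0)$ concentrates at scale $r\sim \la_n$, and the three scales $\la_n \ll \al_n\la_n \ll \tau_n$ are separated, so all cross-interactions are negligible.

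For part~(a), the plan is to fix $\la\in(0,1)$ and split
\begin{align*}
\|b_{n,1}\|_{L^2}^2 = \int_0^{\la\tau_n} \dot a^2(\tau_n,r)\,r\,dr + \int_{\la\tau_n}^\infty \dot a^2(\tau_n,r)\,r\,dr.
\end{align*}
The exterior piece vanishes directly by \eqref{da supp}. On the interior, bound $\dot a^2\le 2\dot\psi^2 + 2\dot\fy^2$: since $A_n\ll\tau_n$, for $n$ large one has $\la\tau_n\le\tau_n-A_n$, so the $\dot\psi$ term is absorbed into Proposition~\ref{str diag}(a), while the $\dot\fy$ term is controlled by \eqref{channel}, which forces $\vec\fy(t)$ to concentrate along $|r-t|\le A$. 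For part~(b), since $\al_n\la_n\ll\tau_n$, for any fixed $\la\in(0,1)$ one has $\al_n\la_n\le\la\tau_n$ eventually, and the triangle inequality yields
\begin{align*}
\|b_{n,0}\|_{H(r\le\al_n\la_n)} \le \|\psi(\tau_n)-Q(\cdot/\la_n)\|_{H(r\le\al_n\la_n)} + \|\fy(\tau_n)\|_{H(r\le\la\tau_n)},
\end{align*}
and the right-hand side vanishes by Proposition~\ref{str diag}(b) and \eqref{channel}, respectively.

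For part~(c), I would write $b_{n,0} = (a(\tau_n)-\pi) - (Q(\cdot/\la_n)-\pi)$ on $\{r\ge\la\tau_n\}$. The first piece vanishes in $H$-norm by \eqref{da supp}. The second piece is handled by the explicit bounds $|Q(\rho)-\pi|\le 2/\rho$ and $|Q_r(\rho)|\le 2/\rho^2$ applied with $\rho=r/\la_n\gg 1$, which together give $\|(Q(\cdot/\la_n)-\pi,0)\|_{H\times L^2(r\ge\la\tau_n)}^2 \lesssim (\la_n/\tau_n)^2 \to 0$. For part~(d), I would apply the almost-orthogonality of the nonlinear wave map energy (as in \cite[Lemma~2.16]{CKLS1}) to the decomposition $\vec\psi(\tau_n)=\vec\fy(\tau_n)+(Q(\cdot/\la_n),0)+\vec b_n$, using parts~(a)--(c) together with \eqref{channel} to see that all cross terms vanish in the limit. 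Combined with \eqref{ea} and the identity $\E(Q,0)=\E(Q)$, this gives
\begin{align*}
\E(\vec b_n) \longrightarrow \E(\vec\psi) - \E(\vec\fy) - \E(Q) = \eta - \E(\vec\fy),
\end{align*}
and since $\eta<2\E(Q)$ and $\E(\vec\fy)\ge 0$, this limit is strictly below $2\E(Q)$, yielding a uniform bound $\E(\vec b_n)\le C<2\E(Q)$ for $n$ large.

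The main technical obstacle is the orthogonality step in~(d): expanding the nonlinear potential $\sin^2(\psi)/r^2$ along a three-piece decomposition generates many cross terms. The standard remedy is to partition $[0,\infty) = [0,\al_n\la_n]\cup[\al_n\la_n,\la\tau_n]\cup[\la\tau_n,\infty)$ and to observe that on each annulus exactly one of $\vec\fy(\tau_n)$, $(Q(\cdot/\la_n),0)$, $\vec b_n$ carries essentially all of its energy, while the remaining ones are small by parts~(a)--(c), \eqref{channel}, and the pointwise decay of $Q$; this reduces the estimation of each cross term to a product of a bounded energy by a quantitative smallness factor.
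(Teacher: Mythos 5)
Your arguments for parts (a)--(c) coincide with the paper's proof: part (a) splits at $r=\la\tau_n$ and uses \eqref{psi dot}, \eqref{channel}, \eqref{da supp}; part (b) uses the triangle inequality together with \eqref{psi-Q} and \eqref{channel}; part (c) uses \eqref{da supp} and the fact that $\la\tau_n/\la_n\to\infty$ (the explicit bounds $|Q(\rho)-\pi|\le 2/\rho$, $|Q_r(\rho)|\le 2/\rho^2$ you give are a perfectly good way to quantify the second piece).

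For part (d) your annular-partition strategy is sound and correct, but it differs modestly from the paper's. You aim for the \emph{sharp limit} $\E(\vec b_n)\to\eta-\E(\vec\fy)$ by accounting separately for the energy carried by $\vec\fy(\tau_n)$ on the outer region, which requires bringing in \eqref{ea} (and, implicitly, the lower bound $\E_0^{\al_n\la_n}(\vec\psi(\tau_n))\ge\E(Q)-o_n(1)$ together with \eqref{ext en A fy}). The paper is more economical: it only needs a one-sided bound, so it localizes $\E(\vec b_n)$ to $[0,\la\tau_n]$ by (c), drops the $\vec\fy$ contribution there by \eqref{inner en fy}, drops the $[0,\al_n\la_n]$ contribution by \eqref{str diag en}, removes $Q(\cdot/\la_n)$ on the remaining annulus since $\al_n\to\infty$, and is left with $\E_{\al_n\la_n}^{\la\tau_n}(\vec\psi(\tau_n))\le\E_{\al_n\la_n}^\infty(\vec\psi(\tau_n))\le\eta+o_n(1)$. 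Both routes land in the same place; yours proves a stronger statement at the cost of some extra bookkeeping, while the paper's gives exactly what is needed. One caveat: your invocation of the almost-orthogonality result from \cite[Lemma 2.16]{CKLS1} is not literally applicable here, since $\vec\psi(\tau_n)=\vec\fy(\tau_n)+(Q(\cdot/\la_n),0)+\vec b_n$ is not a Bahouri--G\'erard profile decomposition (the pieces are not rescaled linear profiles with orthogonal parameters and a Strichartz-vanishing remainder); you clearly recognize this and pivot to the hand-written annular argument, which is the correct remedy and is the actual mechanism behind the proof.
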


\begin{proof} To prove \eqref{bn1 to 0} fix $0 < \la< 1$ and observe that 
we have 
\begin{align*}
\int_{0}^{\infty} b_{n, 1}^2(r) \, r\, dr &\le  \int_0^{\la \tau_n} \dot{\psi}^2(\tau_n, r) \, r\, dr
 + \int_0^{ \la \tau_n} \dot{\fy}^2(\tau_n, r) \, r\, dr \\
 &\quad+ \int_{ \la \tau_n}^{\infty} \dot{a}(\tau_n, r)^2 \, r \, dr.
 \end{align*}
Then \eqref{bn1 to 0} follows from \eqref{psi dot}, \eqref{channel}, and \eqref{da supp}. 

Next we prove \eqref{bn0 int}. To see this, observe that for each $n$ we have
\begin{align*}
\|b_{n, 0}\|_{H( r \le \al_n \la_n)}^2 \le \| \psi(\tau_n)- Q(\cdot/ \la_n)\|_{H (r \le \al_n \la_n)}^2 + \|\fy(\tau_n)\|_{H(r \le \al_n \la_n)}^2.
\end{align*}
The first term on the right-hand-side tends to zero as $n \to \infty$ by \eqref{psi-Q}. To estimate the second term on the right-hand-side we note that for fixed $A >0$ we can find $n$ large enough so that $\al_n \la_n \le \tau_n -A$ and so we have 
\begin{align*}
\|\fy(\tau_n)\|_{H(r \le \al_n \la_n)}^2 \le \|\fy(\tau_n)\|_{H(r \le \tau_n- A)}^2.
\end{align*}
Taking the limsup as $n \to \infty$ on both sides above and then taking $A \to \infty$ on the right and recalling \eqref{channel}  we see that the limit as $n \to \infty$ of the left-hand side above must be zero. This proves \eqref{bn0 int}. 

To deduce \eqref{bn0 ext} note that 
\begin{align*}
 \|b_{n, 0}\|_{H ( r \ge \la \tau_n)}^2 \le \|a(\tau_n)- \pi\|_{H (r \ge \la\tau_n)}^2 +\|Q(\cdot/ \la_n)- \pi\|_{H( r \ge \la \tau_n)}^2.
 \end{align*}
 The first term on the right-hand-side above tends to zero as $ n \to \I$ by \eqref{da supp}. The second term tends to zero since $ \la \tau_n/ \la_n \to \infty$ as $n \to \I$. 
 
 Finally, we establish \eqref{e bn}. First observe that for any fixed $\la>0$, \eqref{bn0 ext} implies that 
 \begin{align*}
 \E(\vec b_n)&= \E_0^{\la \tau_n}(\vec b_n) + \E_{\la \tau_n}^{\infty} (\vec b_n)\\
 &=  \E_0^{\la \tau_n}(\vec b_n) +o_n(1)
 \end{align*}
 as $n \to \infty$. So it suffices to control $\E_0^{\la \tau_n}(\vec b_n)$. Next, observe that for $n$ large enough, \eqref{inner en fy} gives that 
 \begin{align*}
\|\vec \fy(\tau_n)\|_{H  \times L^2(r \le \la \tau_n)}\le \|\vec \fy(\tau_n)\|_{H  \times L^2(r \le \tau_n- A)} 
\end{align*}
and the right-hand side is small for $n, A$ large. This means that the contribution of $\vec \fy(\tau_n)$ is negligible on $r \le \la \tau_n$, and thus 
\begin{align*}
\E_0^{\la \tau_n}(\vec b_n) = \E_0^{\la \tau_n}( \vec \psi(\tau_n) - (Q(\cdot/ \la_n), 0)) + o_n(1).
\end{align*}
Next, recall that Proposition~\ref{str diag} implies that 
\begin{align}\label{str diag en}
\E_0^{\al_n \la_n}( \vec \psi(\tau_n)-Q (\cdot/ \la_n), 0) = o_n(1),
\end{align}
which shows in particular that 
\begin{align}\label{en psi ext}
\E_{\al_n \la_n}^{\infty}(\vec \psi(\tau_n)) \le  \eta +o_n(1)
\end{align}
where $\eta:= \E(\vec \psi)- \E(Q)<2 \E(Q)$. Also, \eqref{str diag en}  means that it suffices to show that 
\begin{align*}
\E_{\al_n\la_n}^{\la \tau_n}(\vec \psi(\tau_n)-(Q (\cdot/ \la_n), 0) ) \le C <2\E(Q).
\end{align*}
Note that since $ \al_n \to \infty$ we have 
\begin{align*}
\E_{\al_n \la_n}^{\infty}(Q(\cdot/ \la_n))= \E_{\al_n}^{\I}(Q) =o_n(1).
\end{align*}
Hence,
\begin{align*}
\E_{\al_n\la_n}^{\la \tau_n}(\vec \psi(\tau_n)-(Q (\cdot/ \la_n), 0) )  = \E_{\al_n\la_n}^{\la \tau_n}(\vec \psi(\tau_n) ) +o_n(1) \le \eta +o_n(1),
\end{align*}
which completes the proof. 
\end{proof}
Next, we would like to show that the sequence $\vec b_n$ does not contain any nonzero profiles. This next result is the global-in-time analog of \cite[Proposition $5.7$]{CKLS1} and the proof is again, reminiscent of the the arguments given in \cite[Section $5$]{DKM1}.  

Denote by $\vec b_n(t) \in \HH_0$ the wave map evolution with data $\vec b_n$. By \eqref{e bn} and \cite[Theorem $1.1$]{CKLS1} we know that $\vec b_n(t) \in \HH_0$ is global in time and scatters to zero as $t \to \pm \infty$. 

The statements of the following proposition and its corollary are identical to the corresponding statements \cite[Proposition $5.7$ and Corollary $5.8$]{CKLS1} in the finite time blow-up case.

\begin{prop}\label{no prof} Let $b_n \in \HH_0$ and the corresponding global wave map evolution $\vec b_n(t) \in \HH_0$ be defined as above. Then, there exists a decomposition
\begin{align}\label{bn decamp}
\vec b_n(t, r)= b_{n, L}(t, r) + \vec \theta_n(t, r)
\end{align}
where $\vec b_{n, L}$ satisfies the linear wave equation \eqref{2d lin} with initial data $\vec b_{n, L}(0, r):=(b_{n, 0}, 0)$. Moreover, $b_{n, L}$ and $\vec \theta_n$ satisfy
\begin{align}
&\left\|\frac{1}{r} b_{n, L}\right\|_{L^3_t(\R; L^6_x(\R^4))} \longrightarrow 0\label{bL to 0}\\
&\|\vec \theta_n\|_{L^{\infty}_t(\R; H\times L^2)} + \left\| \frac{1}{r} \theta_n\right\|_{L^3_t(\R ; L^6_x(\R^4))} \longrightarrow 0 \label{theta to 0}
\end{align}
as $n \to \infty$. 
\end{prop}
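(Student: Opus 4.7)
The proof will proceed by a Bahouri--G\'erard profile decomposition of the data $\vec b_n$ and a rigidity argument forcing the vanishing of every profile, in direct parallel with the finite-time blow-up analog \cite[Proposition~5.7]{CKLS1}. Since $\|b_{n,1}\|_{L^2}\to 0$ by Lemma~\ref{b prelim}(a), its linear evolution $S(t)(0,b_{n,1})$ is uniformly small in both $L^\infty_t(H\times L^2)$ and in $\|r^{-1}\cdot\|_{L^3_tL^6_x(\R^4)}$ by Strichartz estimates, and can be harmlessly absorbed into $\vec\theta_n$. The proposition will therefore follow once we prove that $\|r^{-1} b_{n,L}\|_{L^3_tL^6_x(\R^4)}\to 0$ and that the nonlinear remainder is small in energy.

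First I apply the profile decomposition (Corollary~2.15 of \cite{CKLS1}) to the bounded sequence $(b_{n,0},0)\in H\times L^2$, extracting profiles $U_L^j$ with pairwise orthogonal parameters $(\mu_n^j, t_n^j)$ and a Strichartz-negligible remainder $\vec\gamma_n^J$. Because $\E(\vec b_n)\le C<2\E(Q)$ by Lemma~\ref{b prelim}(d), and by the near-orthogonality of the nonlinear wave map energy under profile decompositions (Lemma~2.16 of \cite{CKLS1}), each nonlinear profile $\vec U^j(t)\in\HH_0$ is global and scatters by \cite[Theorem~1.1]{CKLS1}. The Bahouri--G\'erard nonlinear superposition, combined with the equivariant wave map perturbation lemma, then gives
\begin{equation*}
\vec b_n(t) = \sum_{j=1}^{J} \vec U^j_n(t) + S(t)\vec\gamma_n^J + o_{n,J}(1)
\end{equation*}
in the norm $\|r^{-1}\cdot\|_{L^3_tL^6_x(\R^4)}$, where $\vec U^j_n$ denotes the rescaled and translated nonlinear profile.

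The localization estimates in Lemma~\ref{b prelim}(b),(c) together with the pseudo-orthogonality of the $H\times L^2$-norm under sharp cutoffs (Proposition~2.19 of \cite{CKLS1}) force every profile to concentrate on a scale $\mu_n^j$ with $\al_n\la_n\lesssim\mu_n^j\lesssim\la\tau_n$ for every $\la>0$. The main obstacle is to eliminate any such profile. Supposing for contradiction that some $U_L^1\not\equiv 0$, I reinsert the Bahouri--G\'erard expansion into the identity
\begin{equation*}
\vec\psi(\tau_n+t) = \vec\fy(\tau_n+t) + (Q(\cdot/\la_n),0) + \vec b_n(t) + o(1),
\end{equation*}
valid on suitable spacetime regions by Proposition~\ref{rad prop}, Proposition~\ref{str diag}, and the perturbation lemma. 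The nonlinear profile $\vec U^1_n$ would then generate a nontrivial channel of energy escaping into the region $r\ge t-A$ as $t\to\infty$, scaling-orthogonal to both $\vec\fy$ and the soliton core $(Q(\cdot/\la_n),0)$. Such a contribution would strictly exceed the asymptotic exterior energy budget $\E(\vec\fy)$ established in \eqref{epsi=efy} (equivalently, in the identity $\lim_t\E(\vec a(t))=\E(\vec\psi)-\E(\vec\fy)$ of \eqref{ea}), producing the contradiction. This channel-of-energy rigidity step is the main technical content and uses the equivariant $2d$ exterior lower bounds in the spirit of \cite{DKM1,DKM3} and their adaptation already carried out in \cite{CKLS1}.

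Once every $U_L^j$ is shown to vanish, the decomposition collapses to $\vec b_n(t) = S(t)\vec\gamma_n^J + o_{n,J}(1)$ in Strichartz, yielding \eqref{bL to 0}. The perturbation lemma applied to $\vec b_n(t)$, whose initial data differs from $(b_{n,0},0)$ only by the small velocity $(0,b_{n,1})$, then produces $\vec\theta_n=\vec b_n-\vec b_{n,L}$ small in both energy and Strichartz, giving \eqref{theta to 0}.
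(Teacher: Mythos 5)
Your proposal follows the same broad strategy as the paper --- a Bahouri--G\'erard profile decomposition for $\vec b_n$, absorbing the small velocity $(0,b_{n,1})$ via Lemma~\ref{b prelim}(a) and Lemma~\ref{in vel 0}-type reasoning, using $\E(\vec b_n)\le C<2\E(Q)$ and the Pythagorean energy expansion to make each nonlinear profile global and scattering, and then a rigidity argument to kill the profiles via exterior ``channel of energy'' lower bounds. The localization of scales to $\al_n\la_n\ll\la_n^j\ll\tau_n$ via Lemma~\ref{b prelim}(b),(c) and \cite[Proposition~2.19]{CKLS1} is also exactly what the paper does (equation \eqref{scales}).

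However, the contradiction mechanism you propose in the rigidity step is \emph{not} the one the paper uses, and it has a genuine gap. You argue that a nonzero profile $\vec U^1_n$ would radiate a ``nontrivial channel of energy escaping into the region $r\ge t-A$ as $t\to\infty$'' and that this would \emph{exceed the asymptotic exterior energy budget} $\E(\vec\fy)$ from \eqref{epsi=efy}. The timing does not work out: the profile lives at the intermediate scale $\la_n\ll\la_n^1\ll\tau_n$, so its radiation reaches the far channel $r\ge t-A$ only on a time scale $t\gtrsim\tau_n$ after $\tau_n$, i.e.\ at absolute times $\sim 2\tau_n$. But the decomposition $\vec\psi(\tau_n+t)\approx\vec\fy(\tau_n+t)+(Q(\cdot/\la_n),0)+\vec b_n(t)$ is only controlled on a window $|t|\lesssim\s_n$ with $\la_n\ll\s_n\ll\tau_n$; by the time the profile's wave reaches the channel the scaling structure has reorganized and the error terms in your identity are no longer $o(1)$. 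So there is no clean contradiction with \eqref{epsi=efy}, and in fact \eqref{epsi=efy}/\eqref{ea} are not invoked in the paper's proof of Proposition~\ref{no prof} at all --- they are used only later, in the proof of Theorem~\ref{glob sol}.

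What the paper actually does instead of an energy-budget comparison is to exploit the \emph{time-averaged vanishing of $\dot a$ on a window of length $\s_n$ with $\la_n\ll\s_n\ll\tau_n$} from Lemma~\ref{tech lem} (itself coming from the virial computation Corollary~\ref{ces lem} and the radiation bounds \eqref{channel}, \eqref{da supp}). This smallness is pushed through the profile decomposition to yield Lemma~\ref{2 lim} and Corollary~\ref{p3}, namely that the time-averaged exterior $L^2$ norm of $\dot\fy^1_n+\dot\ga^k_{n,L}$ over $t\in[0,\la_n^1]$ is $o_n^k$. The exterior energy lower bounds for free waves with data $(v_0,0)$ from \cite{CKS} are then applied \emph{within that same window} to force $\fy^1\equiv 0$. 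This local-in-time constraint is precisely what sidesteps the timing problem in your version. To make your proposal into a proof you would need to replace the energy-budget comparison by (i) the time-averaged $\dot a$-smallness of Lemma~\ref{tech lem}, and (ii) the propagation of that bound onto the exterior region of the distinguished profile, i.e.\ an analogue of Lemma~\ref{2 lim}; as written, the crucial step is asserted rather than established.
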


Before beginning the proof of Proposition~\ref{no prof} we use the conclusions of the proposition to deduce the following  corollary  which will be an essential ingredient in the proof of Proposition~\ref{b compact}.
\begin{cor} \label{ext en est}
Let $\vec b_n(t)$ be defined as in Proposition~\ref{no prof}. Suppose that there exists a constant $\de_0$ and a subsequence in $n$ so that $\|b_{n, 0}\|_H \ge \de_0$.  Then there exists $\al_0 >0$ such that for all $t>0$ and all $n$ large enough along this subsequence we have
\begin{align} \label{b ext}
\|\vec b_n(t) \|_{H \times L^2(r \ge t)} \ge \al_0 \de_0.
\end{align}
\end{cor}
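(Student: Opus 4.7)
The plan is to combine the decomposition $\vec b_n(t)=\vec b_{n,L}(t)+\vec\theta_n(t)$ from Proposition~\ref{no prof} with the exterior energy estimate for $1$--equivariant linear waves applied to pure position data. The triangle inequality gives, for every $t>0$,
\begin{align*}
\|\vec b_n(t)\|_{H\times L^2(r\ge t)} \ge \|\vec b_{n,L}(t)\|_{H\times L^2(r\ge t)} - \|\vec\theta_n(t)\|_{H\times L^2},
\end{align*}
and by \eqref{theta to 0} the last term is $o_n(1)$ uniformly in $t$. Hence the task reduces to bounding the exterior energy of the free wave $\vec b_{n,L}$ from below by a universal multiple of $\|b_{n,0}\|_H$.

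Since $\vec b_{n,L}$ solves \eqref{2d lin} with initial data $(b_{n,0},0)$, the standard reduction $v(t,r):=b_{n,L}(t,r)/r$ converts the problem into one about radial free waves on $\R^4$ with pure position initial data, and the $H\times L^2$ norm for $b_{n,L}$ matches, up to constants, the $\dot H^1\times L^2$ norm of $\vec v$. I then appeal to the exterior energy estimate of \cite{CKS} for $4d$ radial free waves, which in squared form yields
\begin{align*}
\max\bigl(\|\vec v(t)\|_{\dot H^1\times L^2(r\ge t)}^2,\ \|\vec v(-t)\|_{\dot H^1\times L^2(r\ge t)}^2\bigr) \ge \tfrac{1}{2}\|v_0\|_{\dot H^1}^2
\end{align*}
for pure position data. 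Because the solution with vanishing velocity is even in $t$, the two exterior energies on the left coincide, and translating back to the $1$--equivariant picture produces an absolute constant $c_0>0$ such that
\begin{align*}
\|\vec b_{n,L}(t)\|_{H\times L^2(r\ge t)} \ge c_0\|b_{n,0}\|_H \qquad \text{for every } t\ge 0.
\end{align*}

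Combining the previous displays with the hypothesis $\|b_{n,0}\|_H\ge\de_0$ gives, for every $t>0$ and every $n$ large enough along the subsequence,
\begin{align*}
\|\vec b_n(t)\|_{H\times L^2(r\ge t)} \ge c_0\de_0 - o_n(1) \ge \tfrac{c_0}{2}\de_0,
\end{align*}
so $\al_0:=c_0/2$ suffices. The main obstacle is securing the pure-position exterior energy estimate in the $1$--equivariant setting and verifying that no finite-dimensional defect of the kind familiar in the general even-dimensional theory obstructs the one-sided lower bound here; once that ingredient is in hand, the rest of the argument is just the triangle inequality and the smallness of $\vec\theta_n$ supplied by \eqref{theta to 0}.
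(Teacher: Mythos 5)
Your proposal is correct and follows essentially the same route as the paper's proof: decompose $\vec b_n = \vec b_{n,L} + \vec\theta_n$ via Proposition~\ref{no prof}, apply the exterior energy lower bound for $4d$ radial free waves with pure position data (which is precisely \cite[Corollary~$5$]{CKS}, transferred to the $1$-equivariant setting via \cite[Corollary~$2.3$]{CKLS1}), and absorb the $o_n(1)$ error from \eqref{theta to 0}. The time-reversal symmetry observation you make to dispense with the $\max$ over $\pm t$ is exactly why the pure-data restriction suffices here, and the ``obstacle'' you flag at the end is already resolved by \cite[Corollary~$5$]{CKS}, which is stated precisely for the pure-data case in even dimensions.
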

\begin{proof} First note that since $\vec b_{n, L} $ satisfies the  linear wave equation \eqref{2d lin} with initial data $\vec b_{n, L}(0) =(b_{n, 0}, 0)$ we know by  \cite[Corollary $5$]{CKS} and \cite[Corollary $2.3$]{CKLS1},  that there exists a constant $\be_0 >0$ so that for each $t\ge 0$ we have
\begin{align*} 
\|\vec b_{n, L}(t)\|_{H \times L^2(r \ge t)} \ge \be_0\|b_{n, 0}\|_H.
\end{align*}
On the other hand, by Proposition~\ref{no prof} we know that 
\begin{align*} 
\|\vec b_{n}(t) - \vec b_{n, L}(t)\|_{H \times L^2(r\ge t)} \le \|\vec \theta_n(t)\|_{H \times L^2} = o_n(1).
\end{align*}
Putting these two facts together gives 
\begin{align*} 
\|\vec b_n(t) \|_{H \times L^2(r \ge t)} &\ge \|b_{n, L}(t)\|_{H \times L^2(r \ge t)}  -o_n(1)\\
&\ge  \be_0\|b_{n, 0}\|_H -o_n(1).
\end{align*}
This yields \eqref{b ext} by passing to a suitable subsequence and taking $n$ large enough.
\end{proof}

The proof of  Proposition~\ref{no prof} is very similar to the proof of  \cite[Proposition $5.7$]{CKLS1}. Instead of going through the entire argument again here, we establish the main ingredients of the proof and we refer the reader to \cite{CKLS1} for the remainder of the argument. 

Since $\vec b_n \in \HH_0$ and  $\E(\vec  b_n) \le C < 2\E(Q)$ we can, by \cite[Corollary $2.15$]{CKLS1}, consider the following profile decomposition for $\vec b_n$: 
\begin{align}\label{bg1}
&b_{n, 0}(r)= \sum_{j \le k}\fy_L^j\left(\frac{-t_n^j}{\la_n^j}, \frac{r}{\la_n^j}\right) + \ga_{n, 0}^k( r),\\
&b_{n, 1}(r) = \sum_{j \le k} \frac{1}{\la_n^j} \dot{\fy}_L^j\left(\frac{-t_n^j}{\la_n^j}, \frac{r}{\la_n^j}\right) + \ga_{n, 1}^k( r)\label{bg2},
\end{align}
where each $\vec \fy^j_L$ is a solution to \eqref{2d lin} and where we have for each $j \neq k$:
\begin{align} \label{ort}
\frac{\la_n^j}{\la_n^k} + \frac{\la_n^k}{\la_n^j} + \frac{\abs{t_n^j -t_n^k}}{\la_n^k} +  \frac{\abs{t_n^j -t_n^k}}{\la_n^j} \to \infty \quad \textrm{as} \quad n \to \infty.
\end{align} 
Moreover, if we denote by $\vec \ga_{n, L}^k(t)$ the linear evolution of $\vec \ga_n^k$, i.e., solution to \eqref{2d lin}, we have for $j\le k$ that
\begin{align} 
&\left( \ga_{n,L}^k(\la_n^jt_n^j, \la_n^j \cdot), \la_n^j \dot \ga_{n,L}^k(\la_n^j t_n^j, \la_n^j \cdot)\right) \rightharpoonup 0 \, \, \textrm{in} \, \,  H \times L^2 \quad \textrm{as} \quad n \to \infty \label{weak}\\
&\limsup_{n \to \infty} \left\|\frac{1}{r} \ga_{n,L}^k\right\|_{L^3_tL^6_x(\R^4)} \to 0 \quad\textrm{as} \quad k \to \infty.\label{stric}
\end{align}
Finally we have the following Pythagorean expansions: 
\begin{align} \label{pyt}
\| \vec b_{n}\|_{H \times L^2}^2 = \sum_{j \le k}\left\|  \vec \fy_L^j\left(\frac{-t_n^j}{\la_n^j}\right)\right\|_{H \times L^2}^2 + \|\vec \ga_{n}^k\|_{H \times L^2}^2 + o_n(1)
\end{align}
As in \cite{CKLS1}, the proof of Proposition~\ref{no prof} will consist of a sequence of steps designed to show that each of the profiles $\fy_L^j$ must be identically zero. Arguing exactly as in \cite[Lemma $5.9$, Corollary $5.10$]{CKLS1} we can first deduce that the times $t_n^j$ can be taken to be $0$ for each $n, j$ and that the the initial velocities $\dot \fy^j_L(0)$ must all be identically zero as well.  We summarize this conclusion in the following lemma: 

\begin{lem} \label{in vel 0}
In the decomposition \eqref{bg1}, \eqref{bg2} we can assume, without loss of generality, that $t_n^j =0$ for every $n$ and for every $j$. In addition, we then have 
\begin{align*} 
\dot{\fy}_L^j(0, r) \equiv 0 \quad \textrm{for every} \quad  j.
\end{align*} 
\end{lem}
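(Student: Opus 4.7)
The plan is to exploit the smallness $\|b_{n,1}\|_{L^2}\to 0$ from Lemma~\ref{b prelim}(a), combined with the Pythagorean expansion~\eqref{pyt2} and the scale-invariance of the $L^2$ norm under $\cdot/\la_n^j$, to obtain
\[
\bigl\|\dot\fy_L^j(\s_n^j)\bigr\|_{L^2} \longrightarrow 0 \quad \text{as}\quad n\to\infty,\qquad \s_n^j:=-t_n^j/\la_n^j,
\]
for every profile index $j$. Indeed, for each fixed $j$ and any $k\ge j$, \eqref{pyt2} bounds $\|\dot\fy_L^j(\s_n^j)\|_{L^2}^2$ by $\|b_{n,1}\|_{L^2}^2+o_n(1)$, and the right-hand side tends to $0$. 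After a diagonal extraction, I may further assume $\s_n^j\to \s^j\in[-\infty,+\infty]$ and treat each profile separately.

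In the case $|\s^j|=\infty$ the profile must vanish identically. I would use the standard identification $\fy_L^j(t,r)=r\,v(t,r)$ with $v$ a radial free wave on $\R^{1+4}$, for which finite-energy equipartition $\lim_{|t|\to\infty}\|\p_t v(t)\|_{L^2(\R^4)}^2 = \tfrac12 \E(\vec v(0))$ is classical. Translating back yields $\|\dot\fy_L^j(t)\|_{L^2}^2 \to \tfrac12\E(\vec\fy_L^j(0))$ as $|t|\to\infty$, so combined with the first display this forces $\E(\vec\fy_L^j(0))=0$, i.e.\ $\vec\fy_L^j\equiv 0$. Such a profile is then simply dropped from the decomposition.

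In the case $\s^j$ is finite, I would redefine the profile by the time translate $\ti\fy_L^j(t,r):=\fy_L^j(t+\s^j,r)$, which still solves~\eqref{2d lin}, and set $\ti t_n^j:=t_n^j-\la_n^j\s^j$, so that $-\ti t_n^j/\la_n^j=\s_n^j-\s^j\to 0$. Strong continuity of the linear flow in $H\times L^2$ yields
\[
\vec{\ti\fy}_L^j\bigl(-\ti t_n^j/\la_n^j\bigr) - \vec{\ti\fy}_L^j(0) \longrightarrow 0 \quad\text{in}\quad H\times L^2,
\]
and, since rescaling by $\la_n^j$ preserves these norms, the corresponding error in~\eqref{bg1}--\eqref{bg2} is absorbed into the remainder $\vec\ga_n^k$, effectively replacing $t_n^j$ by $0$. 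Moreover $\dot{\ti\fy}_L^j(0)=\lim_n\dot\fy_L^j(\s_n^j)=0$ in $L^2$ by strong continuity and the first display.

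The main technical point is to verify that this rearrangement does not violate~\eqref{ort}, \eqref{weak}, or~\eqref{stric}. The shift $t_n^j\mapsto \ti t_n^j$ perturbs $|t_n^j-t_n^k|/\la_n^k$ by at most $O(\la_n^j/\la_n^k)+O(1)$, which is dominated by the divergent ratios already present in~\eqref{ort}; the $H\times L^2$-errors absorbed into $\vec\ga_n^k$ are $o_n(1)$, and by Strichartz in the $4d$ correspondence are also $o_n(1)$ in $L^3_tL^6_x(\R^4)$, so~\eqref{stric} persists; \eqref{weak} survives because strong convergence to $0$ implies weak convergence to $0$. The detailed bookkeeping of this rearrangement is carried out in the finite-time blow-up setting of \cite[Lemma~5.9]{CKLS1}, and the argument transfers to the present global-in-time setting with no essential modification once $\|b_{n,1}\|_{L^2}\to 0$ has been secured.
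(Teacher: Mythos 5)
Your argument follows the same route as the paper, which for this lemma simply cites \cite[Lemma~5.9]{CKLS1} and lists exactly the three ingredients you invoke: the Pythagorean expansion~\eqref{pyt2}, the vanishing $\|b_{n,1}\|_{L^2}\to 0$ from Lemma~\ref{b prelim}(a), and asymptotic equipartition for the associated $4d$ free waves. One small arithmetic slip in the finite-$\s^j$ case: with $\s_n^j=-t_n^j/\la_n^j$ and $\ti\fy_L^j(t,r):=\fy_L^j(t+\s^j,r)$, you should set $\ti t_n^j:=t_n^j+\la_n^j\s^j$ (not $t_n^j-\la_n^j\s^j$), so that $-\ti t_n^j/\la_n^j=\s_n^j-\s^j\to 0$ as intended; with that correction the rest of your bookkeeping goes through exactly as written.
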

The proof of Lemma~\ref{in vel 0} is identical to the proof of \cite[Lemma $5.9$]{CKLS1} and the proof of~\cite[Corollary 5.10]{CKLS1}. 
We refer the reader to \cite{CKLS1} for the details. 

By Lemma~\ref{in vel 0} we can rewrite our profile decomposition as follows: 
\begin{align}\label{bg bn}
&b_{n, 0}(r)= \sum_{j \le k}\fy_L^j\left(0, r/\la_n^j\right) + \ga_{n, 0}^k( r)\\
&b_{n, 1}(r) =o_n(1) \, \, \textrm{in} \, \, L^2 \, \,  \textrm{as} \, \, n \to \infty\label{bg bn1},
\end{align}
Note that in addition to the Pythagorean expansion in \eqref{pyt} we also have the following almost-orthogonality of the nonlinear wave map energy, which was established in \cite[Lemma $2.16$]{CKLS1}: 
\begin{align} \label{nonlin en}
\E(\vec b_n) = \sum_{j\le k} \E(\fy^j_L(0), 0) + \E(\ga_{n, 0}^k, 0) + o_n(1).
\end{align}
Note that $\fy^j, \ga_{n, 0}^k \in \HH_0$ for every $j$, for every $n$, and for every $k$.  Using the fact that $\E(\vec b_n) \le C< 2\E(Q)$, \eqref{nonlin en} and \cite[Theorem $1.1$]{CKLS1} imply that,  for every $j$, the nonlinear wave map evolution of the data $(\fy^j_L(0, r/ \la_n^j), 0)$  given by
\begin{align} 
\vec \fy_n^j(t, r) := \left (\fy^j\left(\frac{t}{\la_n^j}, \frac{r}{\la_n^j}\right)\, , \, \frac{1}{\la_n^j} \dot{\fy}^j\left(\frac{t}{\la_n^j}, \frac{r}{\la_n^j}\right)\right)
\end{align}
is global in time and scatters as $t\to \pm \infty$. Moreover we have the following nonlinear profile decomposition guarranteed by  \cite[Proposition $2.17$]{CKLS1}:
\begin{align} \label{nonlin prof}
\vec b_n(t, r)  = \sum_{j \le k} \vec \fy_n^j(t, r) + \vec \ga_{n, L}^k(t, r) + \vec \theta_n^k(t, r)
\end{align} 
where the $\vec b_n(t, r)$ are the global wave map evolutions of the data $\vec b_n$, $\vec \ga_{n,  L}^k(t, r)$ is the linear evolution of $(\ga_n^k, 0)$, and the errors $\vec \theta_n^k$ satisfy
\begin{align} 
& \limsup_{n\to \infty}\left(\|\vec \theta_n^k\|_{L^{\infty}_t(H\times L^2)} + \left\| \frac{1}{r} \theta_n^k\right\|_{L^3_t(\R ; L^6_x(\R^4))}\right) \to 0 \quad \textrm{as} \quad k \to \infty\label{theta 0}.
\end{align}

Recall that we are trying to show that all of the profiles $\fy^j$ must be identically equal to zero. As in \cite{CKLS1} we can make the following crucial observations about the scales $\la_n^j$. Since we have concluded that we can assume that  all of the times $t_n^j=0$ for all $n, j$ we first note that the orthogonality condition \eqref{ort} implies that for $j \neq k$:
\begin{align*}
\frac{\la_n^j}{ \la_n^k} + \frac{\la_n^k}{\la_n^j} \to \infty \, \, \textrm{as} \, \, n \to \infty.
\end{align*}
Next, recall that by Lemma~\ref{b prelim} we have 
\begin{align}\label{b int}
&\|b_{n, 0} \|_{H( r \le \al_n \la_n)} \to 0 \, \, \textrm{as} \, \, n \to \infty,\\
&\|b_{n, 0} \|_{H( r \ge \la \tau_n)} \to 0 \, \, \textrm{as} \, \, n \to \infty, \, \, \forall \la>0 \, \, \textrm{fixed} \label{b ext1}.
\end{align}
Combining the above two facts with \cite[Proposition $2.19$]{CKLS1} we can conclude that for each $\la_n^j$ corresponding to a nonzero profile $\fy^j$ we have 
\begin{align}\label{scales}
\la_n \ll \la_n^j \ll \tau_n \quad \textrm{as} \quad n \to \infty.
\end{align}

Now, let $k_0$ be the index corresponding to the first nonzero profile $\fy^{k_0}$. We can assume, without loss of generality that $k_0=1$. By \eqref{b int}, \eqref{scales} and \cite[Appendix $B$]{DKM1} we can find a sequence $\ti \la_n$ so that
\begin{align*}
&\ti \la_n \ll \al_n \la_n\\
&\la_n  \ll  \ti \la_n \ll \la_n^1\\
&\ti \la_n \ll \la_n^j \, \, \textrm{or} \,\, \la_n^j \ll \ti \la_n \quad \forall j>1.
\end{align*}
Define
\begin{align*}
\be_n= \frac{\ti \la_n}{\la_n} \to \infty
\end{align*}
and we note that $\be_n  \ll \al_n$ and $\ti\la_n = \be_n  \la_n$. Therefore, up to replacing $\be_n$ by a sequence $\ti \be_n \simeq \be_n$  and $\ti \la_n$ by $\ti{\ti \la}_n:= \ti \be_n \la_n$, we have by Corollary~\ref{tech} and a slight abuse of notation that
\begin{align}\label{psi ti la to pi}
\psi(\tau_n, \ti \la_n) \to \pi \quad \textrm{as} \quad n \to \infty.
\end{align}
We define the set 
\begin{align*} 
\J_{\textrm{ext}}:= \{ j \ge 1 \, \, \vert \, \,  \ti\la_n \ll \la_n^j\}.
\end{align*}
Note that by construction $1 \in \J_{\textrm{ext}}$. 

The above technical ingredients are necessary for the proof of the following lemma and its corollary. The analog in the finite-time blow-up case is  \cite[Lemma $5.10$]{CKLS1}. 
\begin{lem} \label{2 lim} Let $\fy^1$, $\la_n^1$ be defined as above. Then for all $\e>0$ we have 
\begin{align}\label{p1}
&\frac{1}{\la_n^1} \int_0^{\la_n^1} \int_{ \e \la_n^1 + t}^{\infty}  \abs{  \sum_{ j \in \J_{\textrm{ext}} \, , j \le k }  \dot\fy_n^j(t, r) + \dot \ga_{n, L}^k(t, r)}^2 \, r\, dr\, dt= o^k_n 
\end{align}
where $\ds{\lim_{k\to \infty} \limsup_{n \to \infty} o^k_n =0}$. 
Also, for all $j>1$ and for all $\e>0$ we have 
\begin{align} \label{p2}
\frac{1}{\la_n^1} \int_0^{\la_n^1} \int_{ \e \la_n^1 + t}^{\infty} ( \dot \fy_n^j)^2(t, r) \, r\, dr dt \to 0 \quad \textrm{as} \quad n \to \infty.
\end{align}
\end{lem}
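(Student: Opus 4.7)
The plan is to handle \eqref{p2} first, then combine it with Lemma~\ref{tech lem} and the nonlinear profile decomposition \eqref{nonlin prof} to obtain \eqref{p1}.

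For \eqref{p2} with $j > 1$, I rescale via $s = t/\la_n^j$, $\rho = r/\la_n^j$ to rewrite the integral as
\begin{equation*}
\frac{\la_n^j}{\la_n^1}\int_0^{\la_n^1/\la_n^j} \int_{\e \la_n^1/\la_n^j + s}^{\infty} (\dot\fy^j)^2(s,\rho)\,\rho\,d\rho\,ds.
\end{equation*}
Since $j \neq 1$ and $t_n^j = 0$ for all $j$ by Lemma~\ref{in vel 0}, the orthogonality \eqref{ort} forces one of the two regimes $\la_n^1 \ll \la_n^j$ or $\la_n^j \ll \la_n^1$. In the first, the outer $s$-interval $[0,\la_n^1/\la_n^j]$ shrinks to $\{0\}$; since $\dot\fy^j\in C(\R;L^2)$ with $\dot\fy^j(0)\equiv 0$ (Lemma~\ref{in vel 0}), the rescaled average tends to zero. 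In the second, the spatial offset $\e\la_n^1/\la_n^j \to \infty$; monotonicity of the energy of the free wave \eqref{2d lin} on exterior light cones, applied to $\fy^j$ with $\dot\fy^j(0) = 0$, controls the inner integral by the initial energy density of $\fy^j(0)$ on $\rho > \e\la_n^1/\la_n^j$, which vanishes by the finite-energy property.

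For \eqref{p1}, the decomposition \eqref{nonlin prof} yields
\begin{equation*}
\sum_{j\in\J_{\textrm{ext}},\,j\le k}\dot\fy_n^j + \dot\ga_{n,L}^k = \dot b_n - \sum_{j\notin\J_{\textrm{ext}},\,j\le k}\dot\fy_n^j - \dot\theta_n^k,
\end{equation*}
so that the square of the left side is dominated (by the inequality $(x+y+z)^2 \le 3(x^2+y^2+z^2)$) by the contributions of $\dot b_n^2$, of the finitely many $(\dot\fy_n^j)^2$ with $j \notin \J_{\textrm{ext}}$, $j \le k$, and of $(\dot\theta_n^k)^2$. The second set of contributions is absorbed by \eqref{p2}, noting that $1 \in \J_{\textrm{ext}}$ so those terms do have $j > 1$. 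The last contribution is bounded by $\|\dot\theta_n^k\|_{L^\infty L^2}^2$, which tends to zero as $k \to \infty$ by \eqref{theta 0} and thus fits into the definition of $o_n^k$. It remains to show
\begin{equation*}
\frac{1}{\la_n^1}\int_0^{\la_n^1} \int_{\e\la_n^1 + t}^{\infty} \dot b_n^2(t,r)\,r\,dr\,dt \longrightarrow 0 \quad\text{as}\quad n \to \infty.
\end{equation*}
To this end I invoke Lemma~\ref{tech lem} with $\s_n = \la_n^1$ (which satisfies $\la_n \ll \la_n^1 \ll \tau_n$ by \eqref{scales}), obtaining smallness of $\dot a$ on average over the time slab $[\tau_n, \tau_n + \la_n^1]$. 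I then transfer this to $\dot b_n$ on the exterior cone: since $r \ge \e\la_n^1 \gg \la_n$ in the relevant region, the profile $(Q(\cdot/\la_n),0)$ is uniformly close to the constant $(\pi,0)$; and since the same $r$ lies in the deep interior of the large light cone based at $(\tau_n, 0)$, the radiation $\vec \fy(\tau_n + t)$ is negligible in $H \times L^2$ by \eqref{channel}. A finite-speed-of-propagation/perturbation argument for the wave map equation (of the type underlying \cite[Proposition~$2.17$]{CKLS1}) then identifies $\dot b_n(t,r) = \dot a(\tau_n+t, r) + o_n(1)$ in the $L^2$-averaged sense on $\{r > \e\la_n^1 + t,\,t\in[0,\la_n^1]\}$, completing the estimate.

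\textbf{Main obstacle.} The delicate step is the identification of $\dot b_n$ with $\dot a(\tau_n + \cdot)$ on the exterior cone region. The wave map $\vec b_n(t) \in \HH_0$ and the wave map $\vec\psi(\tau_n + t) \in \HH_1$ start from initial data differing by $\vec\fy(\tau_n) + (Q(\cdot/\la_n),0)$; neither $\vec a$ nor the naive sum is itself a wave map. Disentangling the three components (the $Q$-profile, the radiation, and $\vec b_n$) requires a nonlinear perturbation argument that exploits the scale separation $\la_n \ll \la_n^1$ together with the decay of the radiation on light-cone interiors from \eqref{channel}, in the same spirit as the construction of the nonlinear profile decomposition. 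Once this comparison is in place, the rest of the proof is a straightforward application of \eqref{p2} and Lemma~\ref{tech lem}.
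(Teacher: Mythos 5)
Your treatment of \eqref{p2} is correct. After rescaling by $\la_n^j$ and using the orthogonality \eqref{ort} with $t_n^j \equiv 0$, the dichotomy $\la_n^1/\la_n^j \to 0$ or $\la_n^1/\la_n^j \to \infty$ is exhaustive, and in each case the argument (continuity of $s\mapsto\dot\fy^j(s)$ in $L^2$ with $\dot\fy^j(0)\equiv 0$ from Lemma~\ref{in vel 0}, respectively the vanishing of the exterior energy $\E_{\e\la_n^1/\la_n^j}^\infty(\vec\fy^j(0))$ combined with monotonicity of exterior energy) goes through. One small imprecision: you invoke ``monotonicity of the energy of the free wave,'' but $\vec\fy^j_n$ are the \emph{nonlinear} profile evolutions; the monotonicity you need is exterior-energy monotonicity for the wave map $\vec\fy^j$, which follows from finite speed of propagation and works equally well, so the conclusion stands.

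Your treatment of \eqref{p1}, however, has a genuine gap at precisely the step you yourself flag as the ``delicate'' one, and I believe one of the supporting claims in that step is actually false as stated. After splitting off $\dot\theta_n^k$ (handled by \eqref{theta 0}) and the finitely many interior profiles (handled by \eqref{p2}), everything hinges on showing
$\frac{1}{\la_n^1}\int_0^{\la_n^1}\int_{\e\la_n^1+t}^\infty \dot b_n^2(t,r)\,r\,dr\,dt \to 0$, and for this you propose to identify $\dot b_n(t,r)$ with $\dot a(\tau_n+t,r)$ on the exterior region and invoke Lemma~\ref{tech lem}. Two problems. First, the claim that ``the radiation $\vec\fy(\tau_n+t)$ is negligible in $H\times L^2$ by \eqref{channel}'' on the region $\{r\ge\e\la_n^1+t\}$ is wrong: this region extends to $r=\infty$ and in particular contains the neighborhood of $r\approx\tau_n+t$ where $\vec\fy(\tau_n+t)$ carries essentially all of its energy; \eqref{channel} only gives smallness when $|r-(\tau_n+t)|$ is large. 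Second, and more fundamentally, $\vec b_n(t)$ is a wave map evolution while $\vec a(\tau_n+t)=\vec\psi(\tau_n+t)-\vec\fy(\tau_n+t)$ is a difference of two wave maps; they agree at $t=0$ only up to the profile $Q(\cdot/\la_n)$ and satisfy different equations thereafter. Showing that their time derivatives agree in a time-averaged $L^2$ sense on the exterior cone is exactly the kind of nonlinear decoupling statement that the whole proof of Proposition~\ref{b compact} is working to establish (and which there is proved \emph{using} Proposition~\ref{no prof}/Lemma~\ref{2 lim}), so asserting it here without proof risks circularity and in any case leaves the central estimate unjustified. The paper's own route (following \cite[Proof of Lemma 5.10, eq.\ (5.57)]{CKLS1} with Lemma~\ref{tech lem} in place of the argument after (5.66)) presumably does establish the needed control of the $\dot b_n$ term directly, but the mechanism is not the one you sketch, and the step you leave open is not a routine finite-speed argument.
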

\begin{rem} We refer the reader to \cite[Proof of Lemma $5.10$]{CKLS1} for the details of the proof of Lemma~\ref{2 lim}. The proof of \eqref{p1} is nearly identical to \cite[Proof of $(5.57$)]{CKLS1} the one difference being that here we use Lemma~\ref{tech lem} in place of the argument following \cite[equation ($5.66$)]{CKLS1}. The proof of \eqref{p2} is identical to  \cite[Proof of ($5.58$)]{CKLS1} and we omit it here. 
\end{rem}

Note that \eqref{p1} and \eqref{p2} together directly imply the following result: 
\begin{cor}\label{p3}Let $\fy^1$ be as in Lemma~\ref{2 lim}. Then for all $\e>0$ we have 
\begin{align} 
\frac{1}{\la_n^1} \int_0^{\la_n^1} \int_{\e \la_n^1 + t}^{\infty}  \abs{   \dot\fy_n^1(t, r) + \dot \ga_{n, L}^k(t, r)}^2 \, r\, dr\, dt =o_n^k
\end{align}
where  $\ds{\lim_{k\to \infty} \limsup_{n \to \infty} o^k_n =0}$. 
\end{cor}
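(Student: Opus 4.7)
The plan is to isolate $\dot\fy_n^1$ from the full $\J_{\text{ext}}$-sum that appears in \eqref{p1}, controlling the ``extra'' terms by \eqref{p2}. Concretely, since $1 \in \J_{\text{ext}}$, I would write
\begin{align*}
\dot\fy_n^1(t,r) + \dot\ga_{n,L}^k(t,r) \;=\; \Bigl(\!\!\sum_{\substack{j \in \J_{\text{ext}} \\ j \le k}}\!\! \dot\fy_n^j(t,r) + \dot\ga_{n,L}^k(t,r)\Bigr) \;-\;\!\!\!\! \sum_{\substack{j \in \J_{\text{ext}} \\ 2 \le j \le k}}\!\!\! \dot\fy_n^j(t,r),
\end{align*}
square, and apply the elementary inequality $|a-b|^2 \le 2|a|^2 + 2|b|^2$ followed by Cauchy--Schwarz on the second finite sum (which has at most $k-1$ terms).

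Integrating over the region $\{\e\la_n^1 + t \le r, \ 0 \le t \le \la_n^1\}$ and normalizing by $\la_n^1$, the first square is exactly the quantity controlled by \eqref{p1}, so it contributes a term of size $o_n^k$ in the sense that $\lim_{k\to\infty}\limsup_{n\to\infty} o_n^k = 0$. For the second piece, Cauchy--Schwarz gives
\begin{align*}
\frac{1}{\la_n^1}\int_0^{\la_n^1}\!\!\int_{\e\la_n^1+t}^{\infty} \Bigl|\!\!\sum_{\substack{j \in \J_{\text{ext}} \\ 2\le j\le k}}\!\!\dot\fy_n^j\Bigr|^2 r\,dr\,dt \;\le\; (k-1)\!\!\sum_{\substack{j \in \J_{\text{ext}} \\ 2\le j\le k}} \frac{1}{\la_n^1}\int_0^{\la_n^1}\!\!\int_{\e\la_n^1+t}^{\infty} \!\!(\dot\fy_n^j)^2\, r\,dr\,dt,
\end{align*}
and for each fixed $k$, \eqref{p2} sends every individual summand to zero as $n \to \infty$. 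Since there are only finitely many indices $j$ for fixed $k$, this whole contribution is $o_n(1)$ for each $k$, hence also of the form $o_n^k$ after taking $\limsup_n$ and then $k \to \infty$.

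Combining the two bounds gives exactly the desired conclusion, with the combined error $o_n^k$ satisfying $\lim_{k\to\infty}\limsup_{n\to\infty} o_n^k = 0$. There is no real obstacle here, since the hard analytic work has already been done in Lemma~\ref{2 lim}; the argument is merely an algebraic rearrangement plus the triangle/Cauchy--Schwarz inequality. The only point requiring minor care is that the second sum may have a $k$-dependent number of terms, but the order of limits (first $n\to\infty$, then $k \to \infty$) accommodates this harmlessly.
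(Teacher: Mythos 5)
Your proposal is correct and is precisely what the paper intends by saying \eqref{p1} and \eqref{p2} ``directly imply'' the corollary (the paper omits the details entirely). The triangle-inequality decomposition isolating $\dot\fy_n^1$ from the $\J_{\textrm{ext}}$-sum, Cauchy--Schwarz over the finitely many remaining $j$, and the observation that the order of limits ($n\to\infty$ first, then $k\to\infty$) tolerates the $k$-dependent constant, are exactly the intended steps.
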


The proof of Proposition~\ref{no prof} now follows from the exact same argument  as \cite[Proof of Proposition~$5.7$]{CKLS1}. We refer the reader to \cite{CKLS1} for the details.

We can now complete the proof of Proposition~\ref{b compact}. 

\begin{proof}[Proof of Proposition~\ref{b compact}] We argue by contradiction. Assume that Proposition~\ref{b compact} fails. Then, up to extracting a subsequence, we can find a $\de_0>0$ so that 
\begin{align}\label{fail}
\|b_{n, 0}\|_{H} \ge \de_0
\end{align}
for every $n$. By Corollary~\ref{ext en est} we know that there exists $\al_0>0$ so that for all $t$,
\begin{align*}
 \|\vec b_n(t)\|_{H \times L^2(r \ge \abs{t})} \ge \al_0 \de_0.
\end{align*}
We will show that the above is, in fact, impossible by constructing a sequence of times along which the left hand side above tends to zero.
It is convenient to carry out the argument in rescaled coordinates. Set 
\begin{align*}
\mu_n:= \frac{\la_n}{\tau_n}.
\end{align*}
Since $\la_n \ll \tau_n$ as $n \to \infty$, our new scale $\mu_n \to 0 $ as $n \to \infty$.  We next define rescaled wave maps: 
\begin{align}\label{g def}
& g_n(t, r):= \psi(\tau_n + \tau_n t, \tau_n r),\\
& h_n(t, r):=\fy(\tau_n + \tau_n t, \tau_n r) \label{h def}.
\end{align}
Since $\vec g_n(t)$ and $\vec h_n(t)$ are defined by rescaling $\vec\psi$ and $\vec \fy$ we have that $\vec g_n(t) \in \HH_1$ is a global-in-time wave map and the wave map $\vec \fy(t) \in \HH_0$ is global-in-time and scatters to $0$  as $t \to \pm \infty$. 
We then have 
\begin{align*}
a(\tau_n + \tau_n t, \tau_n r) = g_n(t, r)- h_n(t, r).
\end{align*}
Similarly, we define 
\begin{align*}
&\ti{b}_{n, 0}(r):= b_{n, 0}( \tau_n r),\\
& \ti{b}_{n, 1}(r):=\tau_n b_{n, 1}( \tau_n r)
\end{align*}
 and the corresponding rescaled wave map evolutions 
 \begin{align*}
& \ti{b}_n(t, r):=b_n( \tau_n t, \tau_n r),\\
 & \p_t \ti{b}_n(t, r):= \tau_n \dot{b}_n(\tau_n t, \tau_n r).
 \end{align*}
 After this rescaling, our decomposition becomes 
\begin{align}\label{ghqb}
&g_n(0, r)= h_n(0, r) + Q\left(\frac{r}{\mu_n}\right) +\ti b_{n, 0}(r)\\
&\dot{g}_n(0, r)= \dot{h}_n(0, r) + \ti b_{n,1}(r).
\end{align}
We can rephrase \eqref{bn0 ext} and \eqref{bn0 int} in terms of this rescaling and we obtain: 
\begin{align}\label{ti b ext}
 \forall \la>0 \, \, \textrm{fixed}, \, \, &\|\ti{b}_{n, 0}\|_{H (r \ge \la)} \to 0\, \, \textrm{as} \, \, n \to \infty,\\
&\|\ti{b}_{n, 0}\|_{H ( r \le \al_n \mu_n)} \to 0\, \, \textrm{as} \, \, n \to \infty  \label{ti b int}.
\end{align}
Also, \eqref{channel} implies that 
\begin{align}
&\lim_{A \to \infty}\limsup_{n \to \infty}\|\vec h_n(0)\|_{H \times L^2( r \le 1- A/ \tau_n)} = 0\label{chan int},\\
&\lim_{A \to \infty}\limsup_{n \to \infty}\|\vec h_n(0)\|_{H \times L^2( r \ge 1+ A/ \tau_n)} = 0\label{chan ext}.
\end{align}
Next, we claim that for every $n$ a decomposition of the form \eqref{ghqb} is preserved up to a small error if we replace the terms in \eqref{ghqb} with their respective wave map evolutions at some future times to be defined precisely below.

By Corollary~\ref{tech} we can choose a sequence $\ga_n \to \infty$ with 
\begin{align*} 
\ga_n\ll \al_n
\end{align*}
so that 
\begin{align*}
g_n(0, \ga_n \mu_n) \to \pi\quad \textrm{as} \quad n \to \infty.
\end{align*}
Define $\de_n \to 0$ by 
\begin{align*} 
\abs{ g_n (0, \ga_n \mu_n) - \pi } =: \de_n \to 0.
\end{align*}
Using \eqref{psi-Q} we define $\e_n \to 0$ by 
\begin{align*}
\|\vec g_n(0)-(Q(\cdot/ \mu_n), 0)\|_{H \times L^2 ( r \le \al_n \mu_n)}=: \e_n \to 0.
\end{align*}
Finally, choose $\be_n \to \infty$ so that 
\begin{align}\notag
&\be_n \le \min\{ \sqrt{\ga_n},  \de_n^{-1/2}, \e_n^{-1/2}\}\\
&g_n(0, \be_n \mu_n/2) \to \pi \quad \textrm{as} \quad n \to \infty \label{gn bemu}.
\end{align}
As in \cite{CKLS1}, we make the following claims: 
\begin{itemize} 
\item[($i$)] As $n \to \infty$ we have 
\begin{align}\label{g-Q}
\|\vec g_n(\be_n \mu_n/2) - (Q(\cdot/ \mu_n), 0)\|_{H \times L^2( r\le \be_n \mu_n)} \to 0.
\end{align}
\item[($ii$)] For each $n$, on the interval $r \in [\be_n \mu_n, \infty)$ we have 
\begin{align}\label{ghbt}
&\vec g_n\left( \frac{\be_n \mu_n}{2}, r\right) - (\pi, 0) = \vec h_n\left( \frac{\be_n \mu_n}{2}, r\right) +  \vec{\ti{b}}_n\left( \frac{\be_n \mu_n}{2}, r\right) \\ \notag
&\quad + \vec{\breve{\theta}}_n\left( \frac{\be_n \mu_n}{2}, r\right),\\
&\|\vec{\breve{\theta}}_n\|_{L^{\infty}_t( H \times L^2)} \to 0 \notag.
\end{align}
\end{itemize}
We first prove \eqref{g-Q}. The proof is very similar to the corresponding argument in the finite-time blow-up case, see \cite[Proof of $(5.76)$]{CKLS1}. We repeat the argument here for completeness.

First note that we have 
 \begin{align*}
 \|\vec g_n(0)- (Q(\cdot/ \mu_n), 0) \|_{ H \times L^2(r \le \ga_n \mu_n)}  \le  \e_n \to 0.
 \end{align*}
  Unscale the above by setting $\ti g_n(t, r) =g_n(\mu_n t, \mu_nr)$, which gives 
 \begin{align*}
  \|( \ti{g}_n(0), \p_t \ti g_n(0))- (Q(\cdot), 0) \|_{ H \times L^2(r \le \ga_n)} \le  \e_n \to 0.
  \end{align*}
  Now using \cite[Corollary $2.6$]{CKLS1} and the finite speed of propagation we claim that we have
  \begin{align} \label{ti g-Q at be}
\|( \ti{g}_n(\be_n /2), \p_t \ti g_n(\be_n /2))- (Q(\cdot), 0) \|_{ H \times L^2(r \le \be_n)} =o_n(1) .
\end{align}
 To see this, we need to show that \cite[Corollary $2.6$]{CKLS1} applies. Indeed define 
 \begin{align*}
 &\hat{g}_{n,0}(r) := \begin{cases}   \pi \quad \textrm{if} \quad r \ge 2\ga_n  \\ \pi+ \frac{\pi - \ti{g}_n(0, \ga_n)}{\ga_n}(r- 2\ga_n) \quad \textrm{if} \quad \ga_n \le r \le 2\ga_n \\ \ti g_n(0, r) \quad \textrm{if} \quad r \le \ga_n.
 \end{cases}\\
 &\hat{g}_{n, 1}(r) = \begin{cases} \p_t \ti g_n(0, r)\quad  \textrm{if} \quad r \le \ga_n \\ 0 \quad \textrm{if} \quad r \ge \ga_n\end{cases}
 \end{align*}
 Then, by construction we have $\vec{\hat{g}}_n \in \HH_1$, and since  
 \begin{align*}
 \| \vec{\hat{g}}_n-(\pi, 0)\|_{H \times L^2( \ga_n \le r \le 2\ga_n )} \le C \de_n
 \end{align*}
 we then can conclude that 
 \begin{align*} 
 \|\vec{\hat{g}}_n - (Q, 0)\|_{H \times L^2} &\le  \|\vec{\hat{g}}_n - (Q, 0)\|_{H \times L^2(r \le \ga_n)}+  \|\vec{\hat{g}}_n - ( \pi, 0)\|_{H \times L^2(\ga_n \le r \le 2\ga_n )} \\
 &\quad+  \|(\pi, 0) - (Q, 0)\|_{H \times L^2(r \ge \ga_n)}\\
 & \le C( \e_n + \de_n + \ga_n^{-1}).
 \end{align*}
 Now, given our choice of $\be_n$,  \eqref{ti g-Q at be} follows from  \cite[Corollary $2.6$]{CKLS1} and the finite speed of propagation. 
 Rescaling  \eqref{ti g-Q at be}  we have 
 \begin{align*}
\|( {g}_n(\be_n\mu_n/2), \p_t g_n(\be_n\mu_n/2))- (Q(\cdot/ \mu_n), 0) \|_{ H \times L^2(r \le \be_n\mu_n)} \to 0.
\end{align*}
This proves \eqref{g-Q}. Also note that by monotonicity of the energy on interior cones and the comparability of the energy and the $H \times L^2$  norm in $\HH_0$, for small energies, we see that  \eqref{bn1 to 0} and \eqref{ti b int} imply that 
\begin{align}\label{ti b small before be}
\|(\ti b_{n}(\be_n \mu_n/2),  \p_t\ti b_{n}(\be_n\mu_n/2))\|_{ H \times L^2(r \le \be_n \mu_n)} \to 0.
 \end{align}

Next we prove \eqref{ghbt}. First we define
\begin{align*}
&\ti g_{n, 0}(r)=  \begin{cases} \pi - \frac{\pi- g_n(0, \mu_n \be_n/2)}{\frac{1}{2}\mu_n \be_n} r\quad \textrm{if} \quad r \le  \be_n\mu_n/2\\ g_n(0, r) \quad \textrm{if} \quad  r \ge  \be_n\mu_n/2  \end{cases}\\
& \ti g_{n, 1}(r)= \dot{g}_n(0, r).
\end{align*}
Then, let $\chi \in C^{\infty}([0, \infty))$ be defined so that $\chi(r) \equiv 1$ on the interval $[2, \infty)$ and $\textrm{supp} \chi \subset [1, \infty)$. Define 
\begin{align*}
&\vec{\breve{g}}_n(r) := \chi(4 r/ \be_n \mu_n)( \vec{\ti{g}}_n(r)- (\pi, 0))\\
& \vec{\breve{b}}_n(r):= \chi(4 r/ \be_n \mu_n) \vec{\ti{b}}_n(r)
\end{align*}
 and observe that we have the following decomposition 
 \begin{align*}
 \vec{\breve{g}}_n(r)  = \vec h_n(0, r) +  \vec{\breve{b}}_n(r) + o_n(1),
 \end{align*}
 where the $o_n(1)$ is in the sense of $ H \times L^2$ -- here we also have used \eqref{chan int}. Moreover,  the right-hand side above, without the $o_n(1)$ term, is a profile decomposition in the sense of \cite[Corollary $2.15$]{CKLS1} because  of Proposition~\ref{no prof} and  \cite[Lemma~$11$]{CKS} or \cite[Lemma~$2.20$]{CKLS1}. We can then consider the nonlinear profiles. Note that by construction we have $\vec{\breve{g}}_n \in \HH_0$ and as in \cite{CKLS1}, we can use \eqref{gn bemu} to show that $\E(\vec{\breve{g}}_n) \le C < 2\E(Q)$ for large $n$. The corresponding wave map evolution $\vec{\breve{g}}_n(t) \in \HH_0$ is thus global in time and scatters as $t \to \pm \infty$ by \cite[Theorem~$1.1$]{CKLS1}. We also need to check that $\E(\vec{\breve{b}}_n) \le C < 2 \E(Q)$. Note that by construction and the definition of $\ti b_n$, we have
 \begin{align*}
\E( \vec{\breve{b}}_n) &\le \E(\vec{\ti{b}}_n) + O\left( \int_{0}^{\infty} \frac{4r^2}{\be_{n, 0}^2 \mu_n^2} (\chi^{\prime})^2( 4r/ \be_n \mu_n) \frac{b_n^2((1-\tau_n)r)}{r} \, dr \right)\\
&\quad  + \int_{\be_n \mu_n/2}^{\be_n \mu_n} \frac{\sin^2( \chi(4r/ \be_n \mu_n) b_{n, 0}((1-\tau_n)r))}{r} \, dr \\
&\le  \E(\vec{\ti{b}}_n) + O\left( \int_{\be_n \la_n/2}^{\be_n \la_n} \frac{b_{n, 0}^2(r)}{r} \, dr \right)\\
&=\E(\vec{\ti{b}}_n) + o_n(1) \le C < 2\E(Q),
\end{align*}
 where the last line follows from  \eqref{bn0 int} since $\be_n \ll \al_n$. 
 
 Arguing as in  \cite{CKLS1}, we can use Proposition~\ref{no prof},  \cite[Proposition~$2.17$]{CKLS1} and \cite[Lemma~$2.18$]{CKLS1} to obtain
 the following nonlinear profile decomposition  
 \begin{align*}
& \vec{\breve{g}}_n(t, r)  = \vec h_n(t, r) +  \vec{\breve{b}}_n(t, r) + \vec{\breve{\theta}}_n(t, r),\\
& \|\vec{\breve{\theta}}_n\|_{L^{\infty}_t(H \times L^2)} \to 0.
 \end{align*}
 Finally observe that by construction and the finite speed of propagation we have 
 \begin{align*}
& \vec{\breve{g}}_n(t, r)  = \vec g_n(t, r) - \pi,\\
 &  \vec{\breve{b}}_n(t, r)  =\vec{\ti{b}}_n (t, r).
 \end{align*}
 for all  $t \in \R$ and $ r \in [\be_n \mu_n/2 + \abs{t}, \infty)$.  Therefore, in particular we have 
 \begin{align*}
 & \vec{g}_n( \be_n \mu_n/2, r) -(\pi, 0) = \vec h_n(\be_n \mu_n/2, r) +  \vec{\ti{b}}_n(\be_n \mu_n/2, r) + \vec{\breve{\theta}}_n( \be_n \mu_n/2, r)
 \end{align*}
 for all $r \in [\be_n \mu_n, \infty)$ which proves \eqref{ghbt}. 

We can combine \eqref{g-Q},  \eqref{ghbt}, \eqref{ti b small before be},  and \eqref{chan int} together with the monotonicity of the energy on interior cones  and the fact that $\| Q( \cdot/ \mu_n) - \pi \|_{H( r \ge \be_n \mu_n)} = o_n(1)$, to obtain the decomposition 
\begin{align}\label{gqhbth}
\vec{g}_n(  \be_n \mu_n/2, r) &=  (Q(r/ \mu_n), 0)+ \vec h_n(\be_n \mu_n/2, r) \\ \notag
& \quad +\vec{\ti{b}}_n( \be_n \mu_n/2, r) + \vec{\ti{\theta}}_n( r),\\
 &\| \vec{\ti{\theta}}_n\|_{H \times L^2} \to 0 .
 \end{align}


\begin{figure}
\begin{tikzpicture}[
	>=stealth',
	axis/.style={semithick, ->},
	coord/.style={dashed, semithick},
	yscale = 1.2,
	xscale = 1.2]
	\newcommand{\xmin}{0};
	\newcommand{\xmax}{8};
	\newcommand{\ymin}{-0.9};
	\newcommand{\xa}{1};
	\newcommand{\ya}{2};
	\newcommand{\yb}{7.5};
	\newcommand{\xb}{\xa+\yb-\ya};
	\newcommand{\ymax}{2.9};
	\newcommand{\xc}{7.5}
	\newcommand{\delay}{1.5};
	\newcommand{\fsp}{0.2};
	\draw [axis] (\xmin-\fsp,0) node [left] {$t=0$} -- (\xmax,0) node [right] {$r$};
	\draw [axis] (0,\ymin-2*\fsp) -- (0,\ymax) node [below left] {$t$};
	\draw (-\fsp, \ya) node [left] {$\ds t= \frac{\beta_n \mu_n}{2}$} -- (\xmax,\ya);
	\draw (\ya,-0.1) -- (\ya,0.1);
	\draw (\ya,0) node [above left] {$\frac{\beta_n \mu_n}{2}$};
	\draw (3*\ya,-0.1) -- (3*\ya,0.1);
	\draw (3*\ya,0) node [above right] {$\frac{3 \beta_n \mu_n}{2}$};
	\draw [thick, densely dotted, white] (3*\ya+2*\fsp,0) -- (\yb - 2*\fsp,0);
	\draw (\xc,-0.1) -- (\xc,0.1) node [above] {$\alpha_n$};
	\draw (2*\ya, \ya-0.1) -- (2*\ya, \ya+0.1) node [yshift=-18, xshift=2] {$\beta_n \mu_n$};
	\draw [dashed] (\ya,0) -- (2*\ya, \ya) -- (3*\ya,0) -- (3*\ya,\ymin);
	\draw [decorate,decoration={brace,amplitude=10, mirror}] (0, \ymin) -- (3*\ya,\ymin)  node [midway,yshift=-18] {$Q(\cdot/\mu_n)$}; 	
	\draw [decorate,decoration={brace, amplitude=10, mirror, raise=6}] (\ya,0) -- (\xmax-\fsp,0)  node [midway,yshift=-24] {$ h_n(0) + \tilde b_n(0)$};
	\draw [decorate,decoration={brace, amplitude=10, raise=6}] (0, \ya) -- (2*\ya,\ya )  node [midway,yshift=24] {$Q(\cdot/\mu_n)$}; 
	\draw [decorate,decoration={brace, amplitude=10, raise=6}] (2*\ya, \ya ) -- ( (\xmax-\fsp, \ya)  node [midway,yshift=24] {$ h_n(\frac{\beta_n \mu_n}{2}) + \tilde b_n( \frac{\beta_n \mu_n}{2})$}; 	
\end{tikzpicture}
\caption{\label{fig:3g} A schematic description of the evolution of the decomposition \eqref{ghqb} from time $t=0$ until time $t = \frac{\beta_n \mu_n}{2}$. At time $t = \frac{\beta_n \mu_n}{2}$ the decomposition \eqref{gqhbth} holds.}
\end{figure}
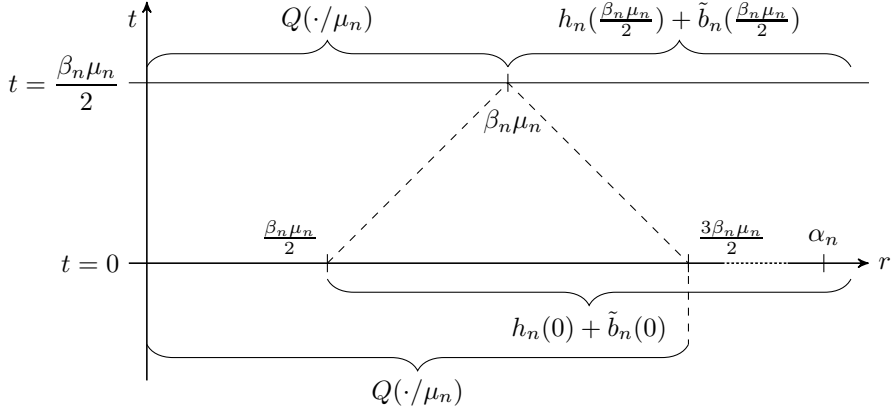


Now, let $s_n \to \infty$ be any sequence such that $s_n \ge \be_n \mu_n/2$ for each $n$. The next step is to prove the following decomposition at time $s_n$:
\begin{align}\label{ghbz}
&\vec g_n(s_n, r)-( \pi, 0) = \vec h_n(s_n, r) + \vec{\ti{b}}_n(s_n, r) + \vec \zeta_n( r) \quad  \forall r \in [s_n, \infty),\\
&  \|\vec \zeta_n\|_{H \times L^2}  \to 0\label{z to 0} \quad \textrm{as} \quad n \to \infty.
\end{align}
We proceed as in the proof of \eqref{ghbt}. By \eqref{g-Q} we can argue as in Corollary~\ref{tech} and find $\rho_n \to \infty$ with $\rho_n\ll \be_n$ so that 
\begin{align}\label{gn rho}
 g_n(  \be_n \mu_n/2,   \, \rho_n\mu_n ) \to \pi \quad \textrm{as} \quad n  \to \infty.
 \end{align}
Define 
\begin{align*}
&\hat f_{n, 0}(r)=  \begin{cases} \pi - \frac{\pi- g_n( \be_n \mu_n/2,  \,  \rho_n \mu_n)}{ \rho_n \mu_n} r\quad \textrm{if} \quad r \le   \rho_n\mu_n\\ g_n(\be_n \mu_n/2, r) \quad \textrm{if} \quad  r \ge  \rho_n \mu_n  \end{cases}\\
& \hat f_{n, 1}(r)= \dot{g}_n( \be_n\mu_n/2, r).
\end{align*}
Let  $\chi \in C^{\infty}$ be as above and set 
\begin{align*}
&\vec{f}_n(r) := \chi(2r/ \rho_n \mu_n)( \vec{\hat{f}}_n(r)- (\pi, 0)),\\
& \vec{\hat{b}}_n(r):= \chi(2 r/ \rho_n \mu_n) \vec{\ti{b}}_n(\be_n \mu_n/2, r).
\end{align*}
 Observe that we have the following decomposition: 
  \begin{align*}
 \vec{f}_n(r)  = \vec h_n(\be_n \mu_n/2, r) +  \vec{\hat{b}}_n(r) + o_n(1).
 \end{align*}
 where the $o_n(1)$ above is in the sense of $H \times L^2$. Moreover, the  right-hand side above, without the $o_n(1)$ term, is a profile decomposition in the sense of \cite[Corollary~$2.15$]{CKLS1} because  of Proposition~\ref{no prof} and \cite[Lemma $11$]{CKS} or \cite[Lemma~$2.20$]{CKLS1}. We can then consider the nonlinear profiles. Note that by construction we have $\vec{f}_n \in \HH_0$ and, as usual, we can use  \eqref{gn rho} to  show that $\E(\vec{f}_n) \le C < 2\E(Q)$ for large $n$. The corresponding wave map evolution $\vec{f}_n(t) \in \HH_0$ is thus global in time and scatters as $t \to \pm \infty$ by \cite[Theorem~$1.1$]{CKLS1}.


  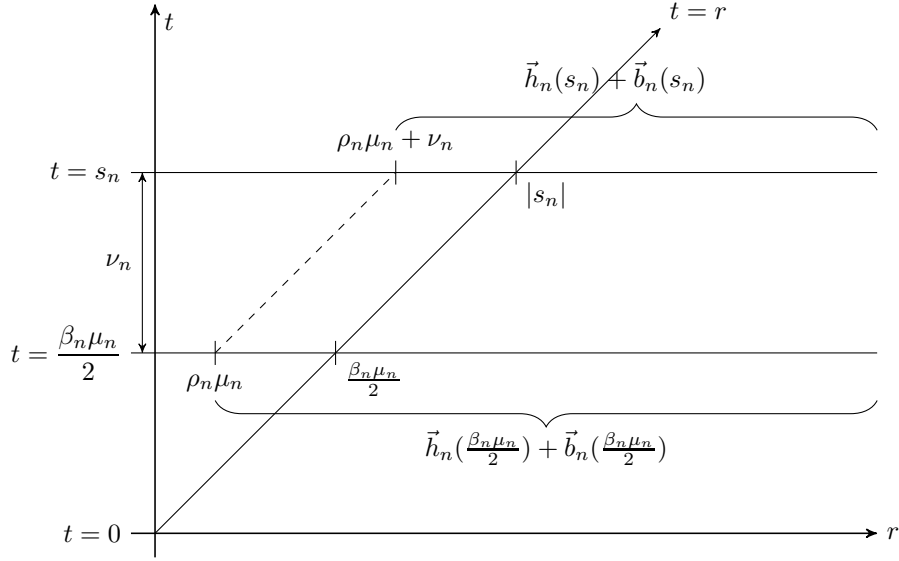
\begin{figure}
\begin{tikzpicture}[
	>=stealth',
	axis/.style={semithick, ->},
	coord/.style={dashed, semithick},
	yscale = 1.6,
	xscale = 1.6]
	\newcommand{\xmin}{-0};
	\newcommand{\xmax}{6};
	\newcommand{\ymin}{-0.2};
	\newcommand{\xa}{0.5}
	\newcommand{\ya}{1.5};
	\newcommand{\yb}{3};
	\newcommand{\xb}{\xa+\yb-\ya}
	\newcommand{\ymax}{4.2};
	\newcommand{\delay}{1.5};
	\newcommand{\fsp}{0.2};
	\draw [axis] (\xmin-\fsp,0) node [left] {$t=0$} -- (\xmax,0) node [right] {$r$};
	\draw [axis] (0,\ymin) -- (0,\ymax+\fsp) node [below right] {$t$};
	\draw [->] (0,0) -- (\ymax,\ymax) node [above right] {$t=r$}; 
	\draw (-\fsp, \ya) node [left] {$\ds t=  \frac{\beta_n \mu_n}{2}$} -- (\xmax,\ya);
	\draw (\xa,\ya-0.1) node [below] {$\rho_n \mu_n$} -- (\xa,\ya+0.1);
	\draw (\ya,\ya+0.1) -- (\ya,\ya-0.1);
	\draw (\ya,\ya)  node [below right] {$\frac{\beta_n \mu_n}{2}$};
	\draw [dashed] (\xa,\ya) -- (\xb,\yb);
	\draw (-\fsp, \yb) node [left] {$\ds t= s_n$} -- (\xmax,\yb);
	\draw (\xb,\yb-0.1) -- (\xb,\yb+0.1) node [above] {$\rho_n \mu_n + \nu_n$};
	\draw (\yb,\yb+0.1) -- (\yb,\yb-0.1);
	\draw (\yb, \yb) node [below right] {$|s_n|$};
	\draw [decorate,decoration={brace, mirror, amplitude=10, raise=18}] (\xa,\ya) -- (\xmax,\ya) node [midway,yshift=-36] {$\vec h_n( \frac{\beta_n \mu_n}{2}) + \vec b_n(\frac{\beta_n \mu_n}{2})$}; 
	\draw  [decorate,decoration={brace, amplitude=10, raise=16}] (\xb,\yb) -- (\xmax,\yb) node [midway,yshift=36, xshift=-8] {$\vec h_n( s_n) + \vec b_n(s_n)$};
	\draw  [<->, xshift=-3] (0, \ya) -- (0, \yb) node [midway,left] {$\nu_n$};	
\end{tikzpicture}
\caption{\label{fig:2g} A schematic depiction of the evolution of the decomposition \eqref{gqhbth} up to time $s_n$. On the interval $[s_n, +\infty)$, the decomposition \eqref{ghbz} holds.}
\end{figure}

  As in the proof of  \eqref{ghbt} it is also easy to show that $\E(\vec{\hat{b}}_n) \le C<2\E(Q)$ where here we use \eqref{ti b small before be} instead of \eqref{bn0 int}.

 Again  we can use Proposition~\ref{no prof}, \cite[Proposition~$2.17$]{CKLS1} and \cite[Lemma~$2.18$]{CKLS1}  to obtain
 the following nonlinear profile decomposition 
 \begin{align*}
& \vec{f}_n(t, r)  = \vec h_n(  \be_n \mu_n/2 +t, r) +  \vec{\hat{b}}_n(t, r) + \vec{\ti{\zeta}}_n(t, r),\\
& \|\vec{\ti{\zeta}}_n\|_{L^{\infty}_t(H \times L^2)} \to 0.
 \end{align*}
In particular, for $$\nu_n:= s_n -\be_n \mu_n/2$$ we have 
 \begin{align*}
& \vec{f}_n(\nu_n, r)  = \vec h_n( s_n, r) +  \vec{\hat{b}}_n(\nu_n, r) + \vec{\ti{\zeta}}_n(\nu_n, r).
 \end{align*}
By the finite speed of propagation we have that
\begin{align*}
&\vec{f}_n(\nu_n, r) = \vec g_n(s_n, r) -(\pi, 0), \\
& \vec{\hat{b}}_n(\nu_n, r) = \vec{\ti{b}}_n(s_n, r)
\end{align*}
as long as $r \ge \rho_n \mu_n + \nu_n$. Using the fact that $\rho_n \ll \be_n$ we have that $s_n \ge \rho_n \mu_n + \nu_n$ and hence, 
 \begin{align*}
& \vec{g}_n(s_n, r)-(\pi, 0)  = \vec h_n( s_n, r) +  \vec{\ti{b}}_n(s_n, r) + \vec{\ti{\zeta}}_n(\nu_n, r) \quad \forall r \in [s_n, \infty).
 \end{align*}
 Setting $\vec{\zeta}_n:= \vec{\ti{\zeta}}_n(\nu_n)$ we obtain \eqref{ghbz} and \eqref{z to 0}. With this decomposition we can now complete the proof. 
 
 One the one hand observe that by rescaling, \eqref{da supp}, and the fact that $2\tau_n s_n \ge  \tau_n + \tau_n s_n$ for $n$ large we have 
 \begin{align*}
 \|\vec g_n(s_n)-\vec h_n(s_n)- (\pi, 0)\|_{H \times L^2( r \ge s_n)} &= \| \vec a( \tau_n + \tau_n s_n, \tau_n \cdot)- (\pi, 0)\|_{H \times L^2(r \ge s_n)} \\
 &= \|\vec a( \tau_n + \tau_n s_n)-(\pi, 0)\|_{H \times L^2( r \ge \tau_n s_n)}\\
 & \le  \|\vec a( \tau_n + \tau_n s_n)-(\pi, 0)\|_{H \times L^2( r \ge  \frac{1}{2}(\tau_n+\tau_n s_n))}\\
 & \longrightarrow 0 \quad \textrm{as} \quad n \to \infty.
\end{align*}
Combining the above with the decomposition \eqref{ghbz} and \eqref{z to 0} we obtain that 
\begin{align}\label{ti b to 0 sn}
\|\vec{\ti{b}}_n(s_n)\|_{H \times L^2(r \ge s_n)}  \to 0 \quad \textrm{as} \quad n \to \infty.
\end{align}

On the other hand, combining our assumption \eqref{fail} and Corollary~\ref{ext en est} we know that there exists $\al_0>0$ so that 
\begin{align*}
\|\vec{\ti{b}}_n(s_n)\|_{H \times L^2(r \ge s_n)} &= \|\vec b_n(\tau_n s_n)\|_{H \times L^2(r \ge \tau_n s_n)} \ge \al_0 \de_0.
\end{align*}
But this contradicts \eqref{ti b to 0 sn}. 
\end{proof}

We can  now complete the proof of Theorem~\ref{glob sol}.

 \begin{proof}[Proof of Theorem \ref{glob sol}]
 Let $\vec a(t)$ be defined as in \eqref{a def}.  Recall that by \eqref{ea} we have 
 \begin{align}\label{Epsi-Efy}
 \lim_{t \to  \infty} \E (\vec a(t)) = \E (\vec \psi)- \E(\vec \fy).
 \end{align}
By Proposition~\ref{global} we have found a sequence of times $\tau_n \to \infty$ so that 
 \begin{align*}
 \E(\vec a(\tau_n)) \to \E(Q)
 \end{align*}
 as $n \to \infty$. This then implies that 
 \begin{align*}
 \lim_{t \to \infty} \E (\vec a(t)) =\E(Q).
 \end{align*}
We now use the variational characterization of $Q$ to show that in fact $\|\dot a(t)\|_{L^2} \to 0$ as $t \to  \infty$. To see this observe that since $a(t) \in \HH_1$ we can deduce by \cite[$(2.18)$]{CKLS1} that 
 \begin{align*}
  \E(Q) \leftarrow \E(a(t), \dot{a}(t)) \ge  \int_0^{\infty} \dot{a}^2(t, r) \, r\, dr + \E(Q).
 \end{align*}
 Next observe that the decomposition in \cite[Lemma~$2.5$]{CKLS1} provides us with a function $\la: (0, \infty) \to (0, \infty)$ such that 
 \begin{align*}
 \|a(t,  \cdot)- Q(\cdot/ \la(t))\|_H \le \de(\E( a(t), 0) - \E(Q)) \to 0.
 \end{align*}
 This also implies that 
 \begin{align}\label{a-Q en to 0}
 \E(\vec a(t) - (Q(\cdot/ \la(t)), 0)) \to 0
 \end{align}
 as  $t \to  \infty$. Since $t \mapsto a(t)$ is continuous in $H$ for $t \in[0, \infty)$ it follows from \cite[Lemma~$2.5$]{CKLS1} that $\la(t)$ is continuous on $[0, \infty)$.  Therefore we have established that 
 \begin{align*}
 \vec \psi(t) - \vec \fy(t) - (Q(\cdot/ \la(t)), 0) \to 0 \quad \textrm{in} \quad H \times L^2 \quad \textrm{as} \quad t \to \infty.
 \end{align*}
 It remains to show that $\la(t) = o(t)$. This follows immediately from the asymptotic vanishing  of $\nabla_{t, r} a(t)$ outside the light cone and from \eqref{a-Q en to 0}. To see this observe that by \eqref{da supp} with $\la=1$ we have that $a(t, r)-(\pi, 0) = o(1)$ in $H \times L^2(r \ge t)$ as $t \to \infty$. Therefore we have 
 \begin{align*}
\E_{\frac{t}{\la(t)}}^{\infty}(Q) = \E_{t}^{\infty}( \pi- Q(\cdot/ \la(t))) \le  \E(\vec a(t) - (Q(\cdot/ \la(t)), 0)) +o(1) \to 0
\end{align*}
as $t \to \infty$. But this then implies that $\frac{t}{\la(t)} \to \infty$ as $t \to \infty$. This completes the proof. 
\end{proof}

 \bigskip

\centerline{\scshape Rapha\"el C\^{o}te }
\medskip
{\footnotesize
\begin{center}
CNRS and \'Ecole polytechnique \\
Centre de Math\'ematiques Laurent Schwartz UMR 7640 \\
Route de Palaiseau, 91128 Palaiseau cedex, France \\
\email{cote@math.polytechnique.fr}
\end{center}
} 

\medskip

\centerline{\scshape Carlos Kenig, Andrew Lawrie, Wilhelm Schlag}
\medskip
{\footnotesize
 \centerline{Department of Mathematics, The University of Chicago}
\centerline{5734 South University Avenue, Chicago, IL 60615, U.S.A.}
\centerline{\email{cek@math.uchicago.edu, alawrie@math.uchicago.edu, schlag@math.uchicago.edu}}
} 


\begin{thebibliography}{10}


 
 \bibitem{BG} Bahouri, H., G\'{e}rard, P.  {\em High frequency approximation of solutions to critical nonlinear wave equations}. Amer.\ J.\ Math., 121 (1999), 131--175.
 
\bibitem{Bu} Bulut, A. {\em Maximizers for the Strichartz inequalities for the wave equation}. Differential Integral Equations 23 (2010), no.\ 11-12, 1035--1072.
 
 \bibitem{BKT} Bejenaru, I.,  Krieger, J.,  Tataru, D.  {\em A codimension two stable manifold of near soliton equivariant wave maps}. Preprint 2012  arXiv:1109.3129.
 
  \bibitem{Co} C\^{o}te, R.  {\em Instability of nonconstant harmonic maps for the $(1+2)$-dimensional equivariant wave map system}. Int.\ Math.\ Res.\ Not.\ 2005, no.~57, 3525--3549.
 
 \bibitem{CKM} C\^{o}te, R.,  Kenig, C., Merle, F. {\em 
Scattering below critical energy for the radial 4D Yang-Mills equation and for the $2D$ corotational wave map system.}  
Comm.\ Math.\ Phys.\ 284 (2008), no.~1, 203--225. 
 
 \bibitem{CKLS1} C\^{o}te, R.,  Kenig, C., Lawrie, A.,  Schlag, W. {\em Characterization of large energy solutions of the equivariant wave map problem: I}.   Amer. J. Math. 137 (2015), no. 1, 139--207. (see arxiv:1209.3682 for newest version). 
 
 \bibitem{CKS} C\^{o}te, R.,  Kenig, C., Schlag, W. {\em Energy partition for the linear radial wave equation}. To appear in Math. Ann. Preprint 2012. arXiv:1209.3678 

 \bibitem{CTZ}  Christodoulou, D., Tahvildar-Zadeh, A.\ S. {\em On the regularity of spherically symmetric wave maps}. Comm.\ Pure Appl.\ Math.\ 46 (1993), no.\ 7, 1041--1091.
 
\bibitem{CTZ1} Christodoulou, D., Tahvildar-Zadeh, A.\ S. {\em On the asymptotic behavior of spherically symmetric wave maps}. Duke Math.\ J.\ 71 (1993), no.\ 1, 31--69.
 
 \bibitem{DK} Donninger, R., Krieger, J. {\em Nonscattering solutions and blowup at infinity for the critical wave equation}. Preprint 2012. 	arXiv:1201.3258v1.
 
 \bibitem{DKM1} Duyckaerts, T., Kenig, C.,  Merle, F.  {\em Universality of blow-up profile for small radial type II blow-up solutions of the energy-critical wave equation}. J.\ Eur.\ Math.\ Soc.~(JEMS) 13 (2011), no.~3, 533--599. 
 
  \bibitem{DKM2} Duyckaerts, T., Kenig, C.,  Merle, F.  {\em Universality of the blow-up profile for small type II blow-up solutions of energy-critical wave equation: the non-radial case}.  To appear in J.\ Eur.\ Math.\ Soc.\ (JEMS) 14 (2012) no. 5 1389--1454	
  
    \bibitem{DKM3} Duyckaerts, T., Kenig, C.,  Merle, F.  {\em Profiles of bounded radial solutions of the focusing, energy-critical wave equation}.  Geom. Funct. Anal. 22 \ (2012) no. 3, 639--698.
      
   \bibitem{DKM4} Duyckaerts, T., Kenig, C.,  Merle, F.  {\em Classification of radial solutions of the focusing, energy-critical wave equation}.  Preprint 2012. 	arXiv:1204.0031v1.
   
 \bibitem{G} Grillakis, M. {\em Classical solutions for the equivariant wave maps in $1+2$ dimensions}. Preprint, 1991
 
 \bibitem{Hel} H$\acute{\textrm{e}}$lein, F. {\em Harmonic maps, conservation laws and moving frames}. Translated from the 1996 French original. With a foreword by James Eells. Second edition. Cambridge Tracts in Mathematics, 150. Cambridge University Press, Cambridge, 2002.
 
\bibitem{Kr} Krieger, J. {\em Global regularity and singularity development for wave maps}. Surveys in differential geometry. Vol.\ XII. Geometric flows, 167--201. Surveys in Differential Geometry, 12. International, Somerville, Mass., 2008.
 
 \bibitem{KS} Krieger, J., Schlag, W. {\em Concentration compactness for critical wave maps}. EMS Monographs in Mathematics. European Mathematical Society (EMS), Z\"{u}rich, 2012.  
  
 \bibitem{KST}Krieger, J., Schlag, W., Tataru, D. {\em Renormalization and blow up for charge one equivariant critical wave maps}. Invent.\ Math.\ 171 (2008), no.~3, 543--615.
 
\bibitem{KST2} Krieger, J.,  Schlag, W., Tataru, D. {\em Renormalization and blow up for the critical Yang-Mills problem}. Adv.\ Math.\ 221 (2009), no.~5, 1445--1521.
 
 \bibitem{KM06} Kenig, C.\ E.,  Merle, F.  {\em Global well-posedness, scattering and blow-up for the energy-critical, focusing, non-linear Schr\"odinger equation in the radial case.} Invent.\ Math.\ 166 (2006), no.~3, 645--675. 

 
 \bibitem{KM08} Kenig, C.\ E., Merle, F.  {\em Global well-posedness, scattering and blow-up for the energy-critical focusing non-linear wave equation}. Acta Math.\ 201 (2008), no.~2, 147--212. 
 
 
  \bibitem{LS} Lawrie, A.,  Schlag, W. {\em Scattering for wave maps exterior to a ball}.  Adv.\  Math.\  232 (2013) no.1 57--97.
 
  \bibitem{LinS}  Lindblad, H.,  Sogge, C.\ D. {\em On existence and scattering with minimal regularity for semilinear wave equations.} J.\ Funct.\ Anal.\ 130 (1995), no.~2, 357--426. 
 

\bibitem{RR}  Rapha\"{e}l, P., Rodnianski, I.  {\em Stable blow up dynamics for the critical corotational Wave map and equivariant Yang-Mills problems}.  Publi.\ I.H.E.S., in press.

\bibitem{RS} Rodnianski, I., Sterbenz, J. {\em On the formation of singularities in the critical $O(3)$ $ \sigma$-model}. Ann.\ of Math.\ 172, 187--242 (2010) 

 \bibitem{SU} Sacks, J.,  Uhlenbeck, K. {\em The existence of minimal immersions of $2$-spheres}.  Ann.\ of Math.\ (2) 113 (1981), no.~1, 1--24. 

\bibitem{Sh} Shatah, J. {\em Weak solutions and development of singularities of the $SU(2)$ $\sigma$-model}. Comm.\ Pure Appl.\ Math.\ 41 (1988), no.~4, 459--469.
 
 \bibitem{SS} Shatah, J.,  Struwe, M.  {\em Geometric wave equations}. Courant Lecture Notes in Mathematics, 2. New York University, Courant Institute of Mathematical Sciences, New York; American Mathematical Society, Providence, RI, 1998.  
 
   \bibitem{SS2} Shatah, J.,  Struwe, M. {\em The Cauchy problem for wave maps.} Int.\ Math.\ Res.\ Not.~2002, no.~11, 555--571.
 
 
 \bibitem{STZ} Shatah, J., Tahvildar-Zadeh, A.\ S. {\em On the Cauchy problem for equivariant wave maps}. Comm.\ Pure Appl.\ Math. 47 (1994), no.~5, 719--754.

\bibitem{Sogge}  Sogge, C.\ D.  {\em Lectures on non-linear wave equations.} Second edition. International Press, Boston, MA, 2008.

 \bibitem{ST1} Sterbenz, J.,  Tataru, D.  {\em Energy dispersed large data wave maps in $2+1$ dimensions}. Comm.\ Math.\ Phys.\ 298 (2010), no.~1, 139--230
 
 \bibitem{ST2}Sterbenz, J.,  Tataru, D.   {\em Regularity of wave-maps in dimension $2+1$}. Comm.\ Math.\ Phys.\ 298 (2010), no.~1, 231--264.
 

  \bibitem{St} Struwe, M. {\em Equivariant wave maps in two space dimensions}. Comm.\ Pure Appl.\ Math.\ 56 (2003), no.~7, 815--823.

 
 \bibitem{T2} Tao, T.  {\em Global regularity of wave maps II. Small energy in two dimensions}. Comm.\ Math.\ Phys.\ 224 (2001), no.~2, 443--544. 

\bibitem{T3}Tao, T. {\em Global regularity of wave maps III-VII}. Preprints 2008-2009.

\bibitem{Tat} Tataru, D.   {\em On global existence and scattering for the wave maps equation}. Amer.\ J.\ Math.\ 123 (2001), no.~1, 37--77. 

  \end{thebibliography}
\end{document}